\documentclass{article}

\usepackage[T1]{fontenc}
\usepackage{multicol,graphicx,xcolor}
\usepackage{authblk}
\usepackage{amsthm, amsmath, amssymb}
\usepackage{mathtools}
\usepackage[subnum]{cases}
\usepackage[shortlabels]{enumitem}
\usepackage[
  hmarginratio={1:1},     
  vmarginratio={1:1},     
  textwidth=15cm,        
  textheight=21cm,
  heightrounded,          
]{geometry}
\usepackage{hyperref}

\usepackage{tikz}
\tikzstyle{nodo}=[circle,draw,fill,inner sep=0pt, minimum size=0.5*width("k")]
\tikzstyle{infinito}=[circle,inner sep=0pt,minimum size=0mm]
\usetikzlibrary{shapes.geometric,calc,decorations.markings,math}
\usetikzlibrary{trees,decorations.pathreplacing}
\usetikzlibrary{decorations.fractals}

\definecolor{lightred}{rgb}{1,0.87,0.87}
\definecolor{lightgreen}{rgb}{0.8,1,0.8}
\hypersetup{%
  bookmarksnumbered, linkcolor=blue, citecolor=red,
  bookmarks, breaklinks,
  linkbordercolor=lightred,
  citebordercolor=lightgreen,
}

\theoremstyle{plain}
\newtheorem{theorem}{Theorem}
\newtheorem{corollary}[theorem]{Corollary}
\newtheorem{lemma}[theorem]{Lemma}
\newtheorem{proposition}[theorem]{Proposition}
\theoremstyle{definition}
\newtheorem{definition}[theorem]{Definition}
\theoremstyle{remark}
\newtheorem{remark}[theorem]{Remark}
\newtheorem{example}[theorem]{Example}

\numberwithin{equation}{section}
\numberwithin{theorem}{section}

\makeatletter

\let\epsilon=\varepsilon
\let\phi=\varphi
\@ifundefined{leqslant}{}{\let\le=\leqslant \let\leq=\le}
\@ifundefined{geqslant}{}{\let\ge=\geqslant \let\geq=\ge}
\@ifundefined{varnothing}{}{\let\emptyset=\varnothing}

\newcommand{\IN}{{\mathbb N}}
\newcommand{\IR}{{\mathbb R}}
\newcommand{\A}{{\mathcal A}}
\newcommand{\C}{{\mathcal C}}
\newcommand{\compact}{{\mathrm{\textup{c}}}}
\newcommand{\Cc}{\C_\compact}
\newcommand{\G}{{\mathcal G}}
\newcommand{\GG}{{\mathbf G}}
\newcommand{\GGfin}{{\mathbf G}_{\mathrm{fin}}}
\newcommand{\K}{{\mathcal K}}
\newcommand{\V}{{\mathcal V}}

\newcommand{\vv}{\text{\upshape \textsc{v}}}
\newcommand{\edge}{{\mathrm{\textup{e}}}}
\newcommand{\intd}{\,{\operatorfont d}}
\providecommand{\coloneq}{:=}

\DeclareMathOperator{\supp}{supp}
\DeclareMathOperator{\sign}{sign}
\DeclareMathOperator{\morse}{m}
\DeclareMathOperator{\e}{e}
\DeclareMathOperator{\dist}{dist}
\DeclareMathOperator{\spanned}{span}
\newcommand{\loc}{{\mathrm{\textup{loc}}}}
\newcommand{\limplies}{\Rightarrow}

\newcommand{\abs}[1]{\mathopen| #1\mathclose|}
\newcommand{\biggabs}[1]{\biggl| #1\biggr|}
\newcommand{\dd}{\@ifstar{\dd@txt}{\dd@frac}}
\newcommand{\dd@frac}[2]{
  \frac{{\operator@font d}#1}{{\operator@font d}#2}}
\newcommand{\dd@txt}[2]{
  {\operator@font d}#1 / {\operator@font d}#2}
\newcommand{\incident}{\succ}

\newcommand{\intervalcc}[1]{[#1]}
\newcommand{\intervalco}[1]{[#1\mathclose)}
\newcommand{\intervaloc}[1]{\mathopen(#1]}
\newcommand{\intervaloo}[1]{\mathopen(#1\mathclose)}

\newcommand{\FIXME}{\@ifstar{\FIXMEdel}{\FIXMEadd}}
\newcommand{\FIXMEdel}[1]{\textcolor{gray}{#1}}
\newcommand{\FIXMEadd}[1]{\textcolor{green!60!black}{#1}}

\def\@oddfoot{\hfill\today\hfill}
\def\@evenfoot{\hfill\today\hfill}

\makeatother
\author{Pablo Carrillo \footnote{pablo.carrillo-martinez@univ-fcomte.fr}}  \affil{{\it Universit\'e Marie et Louis Pasteur, CNRS, LmB (UMR 6623), F-25000, Besan\c{c}on, France}}

\author{Colette De Coster \footnote{colette.decoster@uphf.fr}} \affil{ Univ. Polytechnique Hauts-de-France, INSA Hauts-de-France, CERAMATHS - Laboratoire de Matériaux Céramiques et de Mathématiques, F-59313 Valenciennes, France }

\author{Damien Galant \footnote{damien.galant@umons.ac.be}} \affil{Univ. Polytechnique Hauts-de-France, INSA Hauts-de-France, CERAMATHS - Laboratoire de Matériaux Céramiques et de Mathématiques, F-59313 Valenciennes, France et
Département de Mathématique, Université de Mons, Place du Parc, 20, B-7000
Mons, Belgium}

\author{Louis Jeanjean\footnote{louis.jeanjean@univ-fcomte.fr}}
\affil{{\it Universit\'e Marie et Louis Pasteur, CNRS, LmB (UMR 6623), F-25000, Besan\c{c}on, France}}

\author{Christophe Troestler\footnote{christophe.troestler@umons.ac.be}}  \affil{Département de Mathématique, Université de Mons, Place du Parc, 20, B-7000
Mons, Belgium}

\title{Blow-up analysis and a priori bounds for NLS equations on metric graphs}
\date{\today}

\begin{document}

\maketitle

\begin{abstract}
  We consider, on a connected metric graph $\G$, a family of nonlinear
  Schr\"odinger equations
  \begin{equation}
    \label{stat Lnls}
    -u'' + W_n(x) u + \lambda_n u = \rho_n(x)|u|^{p-2}u,
    \quad n \in \IN.
    \tag{$*$}
  \end{equation}
  We assume that $p > 2$, $(W_n)$, $(\rho_n) \subseteq L^{\infty}(\G)$
  with $\rho_n \geq 0$, $|W_n|_{L^\infty(\G)}$ and
  $|\rho_n|_{L^\infty(\G)}$ are bounded and $\lambda_n \to +\infty$.
  Given $n \in \IN$, we call \emph{solution} a function
  $u_n \in H^1(\G)$ which satisfies~\eqref{stat Lnls} for that
  $n\in \IN$ together with the Kirchhoff conditions at the vertices.
  Focusing on the limiting behavior of sequences
  $(u_n) \subseteq H^1(\G)$ of solutions as $\lambda_n \to + \infty$
  and assuming that the Morse index $\morse(u_n)$ of $u_n$ is
  uniformly bounded, we establish, the existence of a finite subset of
  blow-up points away from which, up to a subsequence, $|u_n|$ has a
  global exponential decay.  These points are generally a strict
  subset of the blow-up points, and their number is estimated by the
  bound on the Morse index of $(u_n)$. It is the first time that this
  global exponential decay property is established on graphs even if
  one consider only signed solutions.  In the last part of the paper
  we derive various results of a priori bounds on the solutions in
  $L^\infty$ and $L^2$. Our blow-up analysis, combined with ODE
  arguments allows, for frequently considered classes of graphs, to
  obtain a fairly complete picture of the relationships between
  the number of nodal regions, Morse index, $L^\infty$ and $L^2$
  norms of solutions.
\end{abstract}

\medbreak

{\small \noindent \text{Keywords:} Nonlinear Schr\"odinger equations;
 Metric graphs; Blow-up analysis; Exponential decay; A priori bounds.\\
\text{Mathematics Subject Classification:} 35J60, 47J30.}

\medbreak

{\small \noindent \text{Acknowledgements:}
This work has been carried out in the framework of the Project NQG (ANR-23-CE40-0005-01), funded by the French National Research Agency (ANR).
P.\,Carrillo, C.\,De Coster, D.\,Galant and L.\,Jeanjean thank the ANR for its support.
This work was initiated when D.\,Galant was
an F.R.S.-FNRS Research Fellow.}

\medskip

{\small \noindent \text{Statements and Declarations:}
    The authors have no relevant financial or non-financial interests to disclose.}

{\small \noindent \text{Data availability:}
    Data sharing is not applicable to this article as no datasets were generated or analysed during the current study.}

\section{Introduction and main results}
\label{intro}

The study of nonlinear Schr\"odinger (NLS) equations
on metric graphs has attracted
immense attention over the last few decades,
as can be seen for instance in the survey paper \cite{KNP}
and in the many references therein. NLS equations on graphs
appears in the study of Bose-Einstein condensates
on ramified structures (see e.g.\,\cite{AST11})
or in the study of networks of optic fibers
(see e.g.\,the discussion in \cite{No}).

Throughout this paper, we consider connected metric graphs
$\G = (\mathcal{E}, \V)$,
where $\mathcal{E}$ is the set of edges
and $\V$ is the set of vertices.
All the graphs we will consider belong
to the following class of graphs
(see \cite[Definition 2.1]{DeDoGaSeTr}).
\begin{definition}
    We denote by $\GG$ the class of metric graphs
    $\G$ that are connected, have at most countably
    many edges, where all vertices are adjacent to finitely
    many edges, and where the infimum of the length
    of all edges is positive.
\end{definition}
A large part of our analysis will focus
on the following class of graphs.
\begin{definition}
    We denote by $\GGfin$ the class of metric graphs
    in $\GG$ with finitely many vertices and edges.
\end{definition}

We recall that a graph $\G \in \GG$ can be naturally
seen as a complete metric space by considering
the shortest path distance on it, which we will
denote by $\dist$. We say that a graph $\G$ is
\emph{compact} if it has finitely many
edges, all of finite length. For graphs in $\GG$,
this is equivalent to being compact as a metric space.

\medskip
Contrary to most of the existing literature,
in this paper we shall not address existence issues.
Instead, assuming that their existence holds,
we are interested in deriving asymptotic properties of solutions for some class of NLS equations set on a graph in $\GGfin$.
 We consider, for each $n \in \IN$, the NLS problem:
\begin{equation}
  \label{eq:edo}
  \begin{cases}
    -u_n'' + W_n(x) u_n
    + \lambda_n u_n = \rho_n(x) \abs{u_n}^{p-2} u_n
    &\text{on every edge } \edge \in \mathcal{E},\\
    u_n \text{ is continuous on } \G,\\[1\jot]
    \displaystyle
    \sum_{\edge \incident \vv} \dd{u_{n,\edge}}{x}(\vv)=0
    &\text{at every vertex } \vv \in \V.
  \end{cases}
\end{equation}
In \eqref{eq:edo} the notation $\edge \incident \vv$ means that the edge $\edge$ is incident at
$\vv$, and the notation $\dd*{u_{\edge}}{x}(\vv)$ stands for  the derivatives away from the vertex $\vv$. The
last equation is the so-called \emph{Kirchhoff boundary condition}.

Given $n \in \mathbb{N}$, we
call solution a function  $u_n \in H^1(\G)$ which satisfies \eqref{eq:edo} for that $n \in \mathbb{N}$.

\medbreak

We assume that $p >2$ and that the potentials $W_n(x)$ and $\rho_n(x)$  satisfy the following set of conditions:
\begin{enumerate}[label={$(H)$}]
\item\label{H}%
    \begin{enumerate}[(i)]
    \item
      there exists $\bar W \in \IR$ such that,
        for almost all $x\in \G$
        and all $n \in \mathbb{N}$,
        $|W_n(x)|\le \bar W$;
    \item\label{H:bounds-on-rhon} there exists $b>0$ such that, for
        almost all $x\in \G$
        and all $n\in \mathbb N$,
        $0\leq \rho_n(x)\leq b$;

        \item there exists $a\in\,(0,b)$
        such that for every $e\in \mathcal{E}$, either,
        for all $n\in \mathbb  N$, $\rho_n(x)\geq a$
        on $e$ or for all $n\in \mathbb N$,
        $\rho_n\equiv 0$ on $e$.
  \end{enumerate}
\end{enumerate}

Assumption~\ref{H} is satisfied in particular if $W_n(x)=W(x)$
with $W\in L^{\infty}(\G)$ and  $\rho_n(x)= \gamma_n \rho(x)$ with $(\gamma_n)$ a bounded sequence with $\gamma_n\geq \epsilon>0$ and $\rho\in L^{\infty}(\G)$ which satisfies, for every $e\in \mathcal{E}$ either $\inf_e \rho(x)>0$ or
  $\rho\equiv 0$ on $e$ (as for example $\rho=\chi_\K$ the characteristic function of a subgraph $\K$ of $\G$). Observe that we do not require that $\rho_n$ is continuous on $\G$.\medskip

In the sequel, we denote by $\morse(u_n)$ the Morse index of
$u_n \in H^1(\G)$, see Definition~\ref{def:Morse index}.  Our main
result is the following.

\begin{theorem}
  \label{main}
  Let $\G \in \GGfin$, $p>2$,
  and assume that~\ref{H} holds.
  Let $(u_n) \subseteq H^1(\G) \backslash \{0\}$
  be a sequence of solutions
  to~\eqref{eq:edo}
  with
  \begin{equation*}
    \lim_{n\to\infty}\lambda_n= + \infty
    \qquad \text{ and } \qquad
    \forall n,\ \morse(u_n)\leq m^*
  \end{equation*}
  for some constant $m^*$.
  Then, passing if
  necessary to subsequences, there exist a number
  $m \in \{1,\dotsc, m^*\}$, sequences of points $(x^1_n), \dotsc,
  (x^m_n)$ in $\G$ and sequences of positive numbers
  $(R^1_n), \dotsc,\linebreak[1] (R^m_n)$
  with $R^i_n \to +\infty$
  such that
  \begin{gather}
    \label{eq:dist-points}
    \forall i \ne j,\qquad
    \lambda_n^{1/2} \dist(x^i_n, x^j_n) \to +\infty ,\\[1\jot]
    \label{eq:loc-max}
    \forall i,\qquad
    \abs{u_n(x^i_n)} = \max_{B(x^i_n, R^i_n \lambda_n^{-1/2})} \abs{u_n}
    \to + \infty.
  \end{gather}
  Moreover, there exists a constant $C>0$ such that,
  for all $n$ and all $x \in \G$,
  \begin{equation}
    \label{eq:exp-bound}
    \abs{u_n(x)}
    \le C \lambda_n^{1/(p-2)} \exp\bigl( -\tfrac{1}{2} \lambda_n^{1/2}
    d_n(x) \bigr),
    \quad \text{where } \, d_n(x) \coloneq \min_{1\le i \le m} \dist(x, x^i_n).
  \end{equation}
  Finally, for all $q \ge 1$,
  \begin{equation}
    \label{eq:mass-convergence}
    \lim_{n \to \infty}
    \lambda_n^{\frac{1}{2} - \frac{q}{p-2}}
    \int_\G \abs{u_n}^q \intd x
    \in \intervaloo{0,+\infty}.
  \end{equation}
\end{theorem}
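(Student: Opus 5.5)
We follow the blow-up scheme of Esposito–Petralanda and Pacella–Poláčik type arguments, adapted to metric graphs. Set $\epsilon_n \coloneq \lambda_n^{-1/2}$ and, for a point $y\in\G$, consider the rescaling
\[
v_n(z) \coloneq \lambda_n^{-1/(p-2)}\, u_n(y + \epsilon_n z),
\]
defined on the dilated graph $\G_n = \epsilon_n^{-1}(\G - y)$. Then $v_n$ solves $-v_n'' + (1+\epsilon_n^2 W_n) v_n = \tilde\rho_n |v_n|^{p-2}v_n$ with Kirchhoff conditions. Since $\G\in\GGfin$ has minimal edge length $\ell>0$, the rescaled ball of radius $R$ around $0$ is, for $n$ large, either an interval or a star graph with finitely many half-lines meeting at one vertex; both kinds of limiting graphs are finitely many in number.

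\textbf{Step 1: blow-up and the limit problem.} We first show that $\|u_n\|_\infty\lambda_n^{-1/(p-2)}$ is bounded below. Otherwise the equation gives $-u_n''+(\lambda_n/2)u_n\le 0$ in the sense of Kato, and a maximum-principle or energy argument forces $u_n=0$. Take $x_n^1$ to be a global maximum point of $|u_n|$. Since $\rho_n\equiv0$ near a maximum is impossible (in that case $u_n$ would be convex or concave there and could not attain its maximum on such an edge unless it is constant, which is excluded), the rescalings converge locally in $C^1$, after extracting a subsequence and using the $\rho_n\ge a$ or $\equiv0$ dichotomy on edges, to a nontrivial bounded solution $v$ of $-v''+v=\rho|v|^{p-2}v$ on a star graph or on $\IR$, with $\rho\in L^\infty$, $\rho\ge a$ on the relevant edges, and Morse index at most $m^*$. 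Nontriviality also needs $v\not\equiv0$ on edges where $\rho=0$. Such limits decay exponentially and have Morse index at least $1$. Morse-index lower semicontinuity under cutoff of test functions gives $\morse(u_n)\ge$ the sum of the Morse indices of the limits at scale-separated points.

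\textbf{Step 2: iterative selection of points.} Given already selected $x_n^1,\dots,x_n^k$, we ask whether
\[
\sup_{x}\ \lambda_n^{-1/(p-2)}|u_n(x)|\,e^{\frac12\lambda_n^{1/2}d_n^k(x)}\to\infty,
\]
where $d_n^k$ is the distance to the selected points. If it does, we pick a point realizing (almost) this supremum. Arguing as in Step 1, but with the weight controlling the size of $u_n$ on a ball of radius $R_n\epsilon_n$, the rescaling there converges to a nontrivial limit. We also obtain $\lambda_n^{1/2}\dist(x_n^{k+1},x_n^i)\to\infty$, since otherwise the new point would lie within the region already described by a previous limit, where the weight is bounded. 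The local-max property \eqref{eq:loc-max} holds after moving to a nearby local maximizer, and $R_n^i\to\infty$ slowly by a diagonal argument. Because each point contributes Morse index at least $1$, the process stops after $m\le m^*$ steps.

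\textbf{Step 3: exponential decay — the main obstacle.} After stopping, we know that $|u_n|\le \delta\lambda_n^{1/(p-2)}$ outside $\bigcup_i B(x_n^i,R\epsilon_n)$ for $R$ large, with $\delta$ small. We need more than this, namely the uniform bound \eqref{eq:exp-bound} with the \emph{precise} rate $\tfrac12$, and it is here that the graph structure (vertices, edges without nonlinearity) enters. In the outer region we have $\rho_n|u_n|^{p-2}\le b\delta^{p-2}\lambda_n\le\lambda_n/4$, so $w=|u_n|$ satisfies $-w''+\tfrac12\lambda_n w\le0$ away from the small balls, with subsolution inequalities holding at vertices thanks to Kirchhoff's condition. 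We would then compare $w$ with
\[
\psi_n(x)=C\lambda_n^{1/(p-2)}\sum_i e^{-\frac12\lambda_n^{1/2}\dist(x,x_n^i)}.
\]
The distance function is not smooth on a graph: it has kinks at cut points, which produce the favorable sign, and at vertices, where Kirchhoff sums must be checked. Here we use that on each edge $\dist(\cdot,x^i_n)$ is piecewise linear with slope $\pm1$ and concave breaks, so that $\psi_n$ is a supersolution of $-\psi''+\tfrac14\lambda_n\psi\ge0$ in the weak sense. Since $\tfrac14<\tfrac12$, the maximum principle applies on the compact outer region, whose boundary lies on the spheres where the bound holds by the choice of $C$. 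Inside the balls, \eqref{eq:exp-bound} is trivial because $|u_n|\lesssim\lambda_n^{1/(p-2)}$ there; this last bound follows from the stopping criterion together with the choice of $x_n^1$ as a global maximum.

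\textbf{Step 4: masses.} We change variables in $\int_\G|u_n|^q$ near each $x_n^i$. By \eqref{eq:exp-bound} and dominated convergence on $\G_n$, each term gives $\lambda_n^{\frac q{p-2}-\frac12}\int|v^i|^q$. The exponential tails are uniformly integrable, and $m$ is bounded, since the volume of a ball in a finite graph grows linearly. The limit is therefore $\sum_i\int|v^i|^q\in(0,\infty)$, after a further subsequence so that the limiting profiles are fixed.
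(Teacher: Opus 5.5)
Your proposal has two genuine gaps: the upper bound $\max_\G|u_n|\le C\lambda_n^{1/(p-2)}$ is never proved, and the comparison function in Step~3 is not a supersolution.

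\textbf{The upper bound (Liouville step).} Step~1 asserts that $v_n=\lambda_n^{-1/(p-2)}u_n(x^1_n+\lambda_n^{-1/2}\,\cdot)$ converges, which presupposes $\max_\G|u_n|\le C\lambda_n^{1/(p-2)}$. Your maximum-point argument only gives the \emph{lower} bound. The remark at the end of Step~3 (``follows from the stopping criterion together with the choice of $x_n^1$ as a global maximum'') does not supply the upper one.

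This is exactly where the paper's key new ingredient enters. It rescales at the natural scale $\tilde\epsilon_n=|u_n(x_n)|^{-(p-2)/2}$, so the rescaled functions are bounded by $1$ by construction. It must then exclude $\tilde\lambda=\lim\lambda_n|u_n(x_n)|^{2-p}=0$. In that case the limit is a nontrivial, bounded, possibly sign-changing solution of $-\tilde u''=\tilde\rho|\tilde u|^{p-2}\tilde u$ on $\IR$ or a star graph with finite Morse index. Corollary~\ref{Liouville-lambda=0} rules this out, via Proposition~\ref{stable-outside-a-compact} and infinitely many nodal intervals, each giving a negative direction. Only then does $\max_\G|u_n|=|u_n(x^1_n)|\le D\lambda_n^{1/(p-2)}$ follow.

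Proposition~\ref{stable-outside-a-compact} is also what justifies your claim that the limits ``decay exponentially''. Bounded solutions of $-v''+v=\rho|v|^{p-2}v$ need not decay (constants, periodic solutions). It is finite Morse index that forces $v\in H^1$, and hence Morse index at least $1$ via Lemma~\ref{Morse index>=1}.

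\textbf{Step~3 fails as stated.} $\psi=e^{-\frac12\lambda_n^{1/2}\dist(\cdot,x^i_n)}$ is not a supersolution on a general graph.
\begin{itemize}
\item Where $\dist(\cdot,x^i_n)$ has a local maximum inside an edge (a point reached by two geodesics, e.g.\ the antipode of $x^i_n$ on a cycle), $\psi$ has a V-shaped kink. There $-\psi''$ contains the negative mass $-\lambda_n^{1/2}\psi\,\delta$. So concave breaks of the distance give the \emph{unfavorable} sign, and the gap between $\frac14\lambda_n$ and $\frac12\lambda_n$ cannot absorb a Dirac mass.
\item At a pendant vertex, or a degree-three vertex reached by two geodesics, the sum of outgoing derivatives of $\psi$ is positive, so the Kirchhoff inequality fails.
\item The outer region is not compact when $\G$ has half-lines.
\end{itemize}

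Part~2 of the paper handles exactly these issues:
\begin{itemize}
\item On each edge of $A_{R,n}$ it uses the monotone profile $\phi_1\cosh(\alpha(\ell-x))$ of Lemma~\ref{lem:exp-bound}. Its derivative vanishes at the endpoint with the smaller value, so all outgoing derivatives at vertices are $\le0$.
\item The rate satisfies $\alpha_{e_n}\le0.7\lambda_n^{1/2}$ thanks to $\lambda_n^{1/2}\ell_{e_n}\ge4$, which needs $R_\epsilon\ge2d^i_\infty$ for points near vertices.
\item Edges are split at the cut point $\tilde x_n$ when both pieces are long; the short case is handled by \eqref{eq:exp-bound-nonsplit-edge}.
\item It concludes with Proposition~\ref{order-preserving-principle}, which is valid on noncompact graphs.
\end{itemize}

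\textbf{Your Step~2 stopping condition.} Boundedness of $\lambda_n^{-1/(p-2)}|u_n|\,e^{\frac12\lambda_n^{1/2}d^k_n}$ \emph{is} \eqref{eq:exp-bound}. So if Step~2 were proved, Step~3 would be unnecessary, and this weighted selection could give a comparison-free route. But then everything rests on showing that an unbounded weighted supremum at $y_n$ yields a new profile with Morse index at least $1$. That needs the Liouville bound above. It also needs a separate argument excluding $|u_n(y_n)|\ll\lambda_n^{1/(p-2)}$: a solution of $-v''+v=0$ on $\IR$ or a star graph with $|v(z)|\le e^{|z|/2}$ vanishes by growth, continuity and Kirchhoff. ``Arguing as in Step~1'' provides none of this.
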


Theorem \ref{main} is established through a detailed blow-up analysis of the sequences $(u_n) \subseteq H^1(\G)$. We first observe, in Lemma~\ref{lower-bound-max|u|}, that as $\lambda_n \to + \infty$, $u_n$ must necessarily blow-up.
Then, in Lemmas~\ref{concentration at a local maximum}
and~\ref{concentration at a local maximum v2}, we
  extract a subset of the blow-up points and we characterize the
  explosion behavior in the vicinity of these points, both when they
either accumulate in the interior of one edge or tend to infinity, or when
they accumulate at a vertex. In the first case, the limit problem is
an NLS equation set on $\mathbb R$ while in the latter case, the limit problem is posed on a \emph{star-graph $\G_k$}
(i.e.\ a graph made of a central vertex $\vv$
and $k$  half-lines
attached to it, observe that  $\G_2$
corresponds to $\mathbb R$).
Next, we show that the number
of our selected blow-up points is
bounded from above by $m^*$. Namely, there exist only
$m \in \{1,\dotsc, m^*\}$ sequences
$(x^1_n), \dotsc, (x^m_n)$ in $\G$ of such blow-up
points which, in addition,
cannot be \emph{too close} one from another.  Observe that, contrary to what happens for
    positive radial solutions on domains (see e.g.~\cite[Corollary 3.2]{Espetal}),
    here the number $m$ of selected blow-up points can be less
    than the number of local maxima of $|u_n|$ for $n$ large (see Remark \ref{Rem Strict} for more details). Finally, the derivation of the global decay estimate \eqref{eq:exp-bound} appears to be new on metric graphs and  is a major achievement of this work. It relies on obtaining a maximum principle (Proposition \ref{order-preserving-principle}), which is also of independent interest. Combining Proposition \ref{order-preserving-principle} with delicate constructions of comparison functions allows to derive \eqref{eq:exp-bound}. We point out that a difficulty to establish  \eqref{eq:exp-bound} lies in the presence of the vertices.
Indeed, we have to justify that exponential decay can propagate through them.
Having established the decay estimate, the limit \eqref{eq:mass-convergence} follows directly.
\medskip

Now, let us comment on the presence of the
weights $W_n(x)$ and $\rho_n(x)$.

The terms $W_n(x)$ correspond to the presence
of \emph{external potentials} in the nonlinear
Schrödinger equations. Notoriously, such nonautonomous
equations can be harder to study than their
autonomous counterparts, as can be seen
from studies on $\IR^N$.
Moreover, while the study
of NLS equations with ``Dirac-type'' interactions
at the vertices has undergone a lot of recent development
(see e.g.~\cite[Section 4.3]{KNP} and the references therein),
there are still not many works considering
NLS equations with potentials on graphs.

As for the term $\rho_n$, it serves at least two purposes.
First, by taking $\rho_n$ equal to the indicator function
of the \emph{compact core} of a graph (namely, the set of all its
bounded edges), one recovers the so-called
\emph{localized nonlinearity} case, that has undergone
a lot of interest recently (see e.g.\,the review papers
\cite{BCT-2019} or \cite[Section 4.4]{KNP}
and more recent developments such as
\cite{BCJS-2023, CGJT-2025}).
Secondly, when studying solutions to NLS equations
where a possible lack of a priori bounds is present,
one often has to resort
to the so-called ``monotonicity trick''.
This leads to the introduction of
a family of equations parameterized
by a non-negative real parameter $\rho$
living in a compact interval.
Note that in recent versions of the ``monotonicity trick'',
see in particular \cite{BCJS-2024},
information on the Morse index on the solutions
of the parameterized equations
can be obtained directly from some
``abstract'' variational principle.
Hence, families of solutions
as handled in Theorem~\ref{main}
naturally appear in recent research.

\medskip

In the present work, our focus is to derive ``once
and for all'' an asymptotic description of solutions
in the limit $\lambda_n \to +\infty$, trying to address
a large class of NLS equations.

\medskip

Let us mention that results
in the spirit of Theorem \ref{main} have been previously obtained in \cite{BCJS-2023, CJS-2022}, themselves inspired by the  blow-up analysis performed on a bounded domain of $\IR^N$ in \cite{EspPet}, see also
\cite{DHR2004}. More precisely, in \cite{CJS-2022}, the authors consider  the particular case where $\G$ is compact and $W_n$, $\rho_n$ are constants with $\rho_n\in \,(0,1]$ while, in \cite{BCJS-2023}, it was assumed that the $W_n$ are constants  and that $\rho_n(x)=\gamma_n \rho(x)$ with $\rho$ the characteristic function of the  metric subgraph of $\G$  consisting of all the bounded edges of $\G$ and $\gamma_n$ constants with $\gamma_n\in \,(0,1]$. In this last case note that, just as in our setting, the function $\rho$ is not necessarily continuous on $\G$.

Here, we extend the results of \cite{BCJS-2023, CJS-2022} in several
directions. First, in \cite{BCJS-2023, CJS-2022} only positive
functions $u_n \in H^1(\G)$ were considered. To pass from the
treatment of positive to possibly sign-changing functions we need to
derive a new Liouville-type result, see Proposition
\ref{stable-outside-a-compact}. Second, we allow the problem to be
non-autonomous with the presence of the potentials $W_n$ and
$\rho_n$. These less stringent assumptions should be useful in
future literature on the NLS equation on graphs. Moreover, the global
exponential decay estimate \eqref{eq:exp-bound} was not considered in
\cite{BCJS-2023, CJS-2022}.

\medbreak

The derivation of Theorem \ref{main} is,
in particular, motivated by the search
of a priori bounds for the solutions to \eqref{eq:edo}.
The need for these bounds appeared recently
in the study of the existence of \emph{normalized solutions}
(namely, solutions whose $L^2$ norm is prescribed)
for problems of the type of \eqref{eq:edo}
in the so-called mass-supercritical case,
namely when $p >6$,
see \cite{BCJS-2023, CGJT-2025, CJS-2022}.
There, $\lambda_n$ corresponds to a
Lagrange multiplier and thus is an unknown.
As a result, having information on it can prove crucial
to obtain convergence properties
of the associated sequence $(u_n) \subseteq H^1(\G)$.

Regarding that direction note that, as a direct consequence of Theorem \ref{main}, we have,

\begin{corollary}
  \label{info-norms}
  Let $\G \in \GGfin$ and $p>2$.  Assume that \ref{H} holds.
  Let $(u_n) \subseteq  H^1(\G)$ be a sequence of solutions
  to~\eqref{eq:edo} which satisfy, for some $c>0$ and $m^*>0$,
  \begin{equation*}
    \text{for all } n \in \IN, \qquad
    \int_{\G}|u_n|^2 \intd x = c
    \quad \text{and}\quad
    \morse(u_n) \le m^*.
  \end{equation*}
  Then, for any $p>2$ with $p \ne 6$,
    the sequence $(\lambda_n) \subseteq \IR$
    is bounded from above.
\end{corollary}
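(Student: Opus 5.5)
We argue by contradiction. Suppose $(\lambda_n)$ is not bounded from above; passing to a subsequence we may assume $\lambda_n \to +\infty$. Since $\int_\G |u_n|^2 \intd x = c > 0$, every $u_n$ is nonzero. The hypotheses of Theorem~\ref{main} therefore hold: $\G \in \GGfin$, $p>2$, \ref{H} is satisfied, $\lambda_n \to +\infty$ and $\morse(u_n) \le m^*$. Applying the theorem, after passing to a further subsequence, we may use its conclusion~\eqref{eq:mass-convergence} with $q = 2$. This gives
\begin{equation*}
  \lim_{n\to\infty} \lambda_n^{\frac12 - \frac{2}{p-2}} \int_\G |u_n|^2 \intd x
  = \lim_{n\to\infty} c\, \lambda_n^{\frac{p-6}{2(p-2)}} = L \in \intervaloo{0,+\infty}.
\end{equation*}
The exponent simplifies as follows: $\frac12 - \frac{2}{p-2} = \frac{(p-2) - 4}{2(p-2)} = \frac{p-6}{2(p-2)}$. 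Its sign is the sign of $p-6$, because $p>2$.

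We now distinguish two cases.
\begin{enumerate}[(i)]
\item If $p>6$, the exponent is positive and $\lambda_n \to +\infty$. Hence $c\,\lambda_n^{(p-6)/(2(p-2))} \to +\infty$, which contradicts $L < +\infty$.
\item If $2<p<6$, the exponent is negative. Hence $c\,\lambda_n^{(p-6)/(2(p-2))} \to 0$, which contradicts $L > 0$.
\end{enumerate}
In both cases we reach a contradiction, so $(\lambda_n)$ is bounded from above. In the excluded case $p=6$ the exponent vanishes, the quantity is identically equal to $c$, and no information on $\lambda_n$ can be extracted; this is why $p \ne 6$ is assumed.

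There is no real obstacle here. The only point to watch is the handling of subsequences. Theorem~\ref{main} gives its conclusions only along a subsequence, so the argument must start by extracting a subsequence with $\lambda_n \to +\infty$, which is possible precisely when boundedness from above fails. Along that subsequence the mass is still identically $c$, so the contradiction obtained there suffices.
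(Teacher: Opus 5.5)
Your proposal is correct and is essentially the paper's proof: argue by contradiction, take $q=2$ in \eqref{eq:mass-convergence}, and note that $\lambda_n^{\frac{p-6}{2(p-2)}}$ tends to $0$ when $p<6$ and to $+\infty$ when $p>6$. You handle the subsequence extraction (and the nontriviality of $u_n$) more explicitly than the paper's $\varlimsup$ formulation, but the argument is the same.
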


Let us mention that this result has already proved
useful in \cite{CGJT-2025} to obtain
the infinite multiplicity of normalized solutions
to a NLS equation with localized nonlinearities when $p > 6$.

\medskip
In the final part of the paper,
we establish relations between properties
of the solutions, namely between their
$L^{\infty}$ norm, their $L^2$ norm,
their Morse index and the number of their nodal zones.

We begin by an inequality between the number
of nodal zones and the Morse index
(Proposition~\ref{Morse_nodal_zones}),
remarking that some care is required when $\rho$ vanishes.

We then derive lower bounds
on $\lambda$ for a given Morse index
(Propositions \ref{lower_bound_lambda_compact}
and \ref{lower_bound_lambda_half-lines}).
The possibility to have $\rho \equiv 0$
on some edges and the presence of solutions
vanishing on edges creates some interesting behavior
that we illustrate through examples.

Next, we establish $L^\infty$ bounds for solutions
with a bounded Morse index (Proposition~\ref{L_infty_G})
and bounded values of $\lambda$,
a result similar to the one obtained
in the pioneering work of Bahri and Lions~\cite{BaLi}.
In our case,
we exploit the unidimensional nature of the domains
to resort to an ODE argument of Hartman \cite{Ha}.
In compact graphs when $\inf_{\G} \rho > 0$,
the $L^\infty$ bound and the bound on the Morse index
are actually equivalent
(Corollary~\ref{corollary_bounds_compact}).
We show that this is in general not the case
for noncompact graphs in $\GG$.
We also come across curiosities,
such as \emph{compactly supported $H^1$
solutions with infinite Morse index}
(Example~\ref{compactly_supported_solutions}).

We then focus on the $L^2$ norms of solutions,
with the aim to better understand the problem
of normalized solutions for our equations.
When $\lambda \to +\infty$, all the information
we need is given by Theorem~\ref{main},
in particular by \eqref{eq:mass-convergence}.
In compact graphs, it then suffices to combine
this result with our $L^\infty$ bound
to describe the full picture of the $L^2$ norms
of solutions (Theorem~\ref{thm:masses_compact})
for a bounded Morse index.
In non-compact settings, the $L^\infty$
information is insufficient
and we provide results in the settings of the localized
nonlinearity (Theorem~\ref{thm:locnonlin})
and of the usual NLS equation on the half-lines
(Theorem~\ref{thm:nls}), taking profit
of the explicit nature of the solutions on the half-lines.
In non-compact cases, the $L^2$ norms may blow up
as $\lambda \to 0^+$. We bound the rate at which this
may happen in terms of powers of $\lambda$ and show
that our exponents in the bounds are optimal.
We also observe that on non-compact graphs,
those rates can be different in the localized nonlinearity case and in the usual NLS equation.
\medbreak

The paper is organized as follows. In Section \ref{sec: Preliminaries} we recall some classical definitions and results concerning the Morse index and we establish a Liouville type result in Proposition~\ref{stable-outside-a-compact} and our maximum principle, Proposition~\ref{order-preserving-principle}.
In Section \ref{sec: blow-up} we perform the blow-up analysis and we prove Theorem \ref{main}.
Finally Section~\ref{sec:A_Priori} is devoted to the study of various properties of solutions  with
a bounded Morse index as described just above.

\medbreak

\noindent\textbf{Notations:}

Any bounded edge $\edge$ is identified with a closed bounded interval
$I_{\edge}$, typically $[0,\ell_{\edge}]$ with $\ell_{\edge}$ being
the length of $\edge$, while each unbounded edge is identified with a
closed half-line $I_{\edge}=[0,+\infty)$,
in which case $\ell_\edge \coloneq +\infty$.
The notation $\dd*{u_{\edge}}{x}(\vv)$ stands for
$u'_{\edge}(0)$ or $-u'_{\edge}(\ell_{\edge})$,
according to whether
the vertex $\vv$ is identified with $0$ or $\ell_{\edge}$.
For further details on analysis on metric
(quantum) graphs, one can e.g.\ refer to the monograph \cite{BK}.

A function on metric graph $u: \G \to \IR$ is identified with a vector of functions $\{u_{\edge}\}$, where each $u_{\edge}$ is defined on the corresponding interval $I_{\rm e}$ such that $u|_{\edge}=u_{\edge}$. Endowing each edge with Lebesgue measure, one can define the space $L^p(\G)$ in a natural way, with norm
given by the sum or series
\begin{equation*}
  \|u\|_{L^p(\G)}^p = \sum_{\edge \in \mathcal{E}} \|u_{\edge}\|_{L^p(\edge)}^p.
\end{equation*}
The Sobolev space $H^1(\G)$ consists of the set of \emph{continuous}
functions $u: \G \to \IR$ such that $u_{\edge} \in H^1(0,
\ell_{\edge})$ for every edge $\edge$ (recall that
$\ell_{\edge}$ can be $+\infty$)
and whose $H^1(\G)$ norm is finite, where
\begin{equation*}
  \|u\|_{H^1(\G)}^2
  = \sum_{\edge \in \mathcal{E}} \big(\|u_{\edge}'\|_{L^2(\edge)}^2
  + \|u_{\edge}\|_{L^2(\edge)}^2\big).
\end{equation*}
The spaces $L^p_\loc(\G)$ and $H^1_\loc(\G)$ are defined as usual.  The
notation $\Cc(\G)$ denotes the space of continuous functions with compact
support in $\G$.

To keep notations light, we will use
$\inf$/$\sup$/$\varliminf$/$\varlimsup$ even for functions
only defined almost everywhere. In this case, they
are to be understood as referring
to the \emph{essential} infimum,
supremum, limit inferior and limit superior.

\section{Some preliminaries and a maximum principle}

\label{sec: Preliminaries}

In this section, we consider the problem
\begin{equation}
  \label{eq:edo2}
  \begin{cases}
    -u'' + W(x) u  =  \rho(x) \abs{u}^{p-2} u
    &\text{on every edge } \edge \in \mathcal{E},\\
    u \text{ is continuous on }\G,
    \\[1\jot]
    \displaystyle
    \sum_{\edge \incident \vv} \dd{u_{\edge}}{x}(\vv)=0
    &\text{at every vertex } \vv \in \V,
  \end{cases}
\end{equation}
where $p >2$,  $W \in L^{\infty}(\G)$ and $\rho \in L^{\infty}(\G)$.

\begin{definition}
  \label{def:weak solution}
  Let $\G \in \GG$.
  We say that $u \in H^1_\loc(\G)$ is a \emph{solution} to \eqref{eq:edo2} if
  \begin{equation}
    \label{eq:weak-formulation}
    \forall \varphi\in  H^1(\G)\cap \Cc(\G), \quad \int_\G
    \big(u'\varphi' + W(x)\, u \,\phi\big) \intd x = \int_\G
    \rho(x)|u|^{p-2}u \,\phi\intd x.
  \end{equation}
\end{definition}

\begin{definition}
  \label{def:Morse index}
  Let $\G \in \GG$, $u \in H^1_\loc(\G)$ be a solution to \eqref{eq:edo2}.
  Consider the quadratic form associated
  to the linearization of \eqref{eq:edo2} at $u$:
  \begin{equation*}
    Q_u(\varphi; \mathcal{G})
    \coloneq \int_\G \Big(\abs{\phi'}^2 + W(x)\abs{\phi}^2
    - (p-1)\rho(x) |u(x)|^{p-2}|\phi|^2\Big) \intd x,
  \end{equation*}
  defined on $H^1(\G) \cap \Cc(\G)$.

  \begin{enumerate}
  \item The \emph{Morse index} of $u$,
    denoted $\morse(u)$ is the maximum
    integer $m$ such that there exists a subspace
    $X \subset H^1(\G) \cap \Cc(\G)$ of dimension $m$ with the property
    \begin{equation*}
    \forall \phi \in X \setminus\{0\},\quad
    Q_u(\varphi; \mathcal{G})
    < 0 ,
    \end{equation*}
    or $+\infty$ if there is no such maximal integer.

  \item We say that $u$ is \emph{stable outside
    a compact set $C \subseteq \G$}, if
    \[
        \forall \phi \in H^1(\G) \cap \Cc(\G), \quad
        \text{ if } \supp \phi \subseteq \G \setminus C,
        \text{ then }
        Q_u(\phi; \G) \ge 0.
    \]
  \end{enumerate}
\end{definition}

\begin{remark}
    The definition of Morse index for a
    partial differential equation
    set on a domain $\Omega$ of $\IR^N$ uses
    test functions in the space $\C^\infty_\compact(\Omega)$.
    Here we use instead test functions in
    $H^1(\G) \cap \Cc(\G)$ because of the presence
    of the vertices of $\G$.
    Note as well the density
    of $H^1(\G) \cap C_c(\G)$ in $H^1(\G)$
    (see e.g.\,\cite[Remark~3.2]{DeDoGaSeTr}).
\end{remark}

We now state the first main result of this section,
Proposition \ref{stable-outside-a-compact},
which will imply, in view of Lemma \ref{Stable via Morse},
that the solutions of some equations
that will appear through blow-up procedures,
decay to $0$ at infinity whenever they have
a finite Morse index.
In turn, this
will enable us to deduce the mentioned Liouville
type result, Corollary \ref{Liouville-lambda=0}.

\begin{proposition}
    \label{stable-outside-a-compact}
    Let $\G \in \GGfin$ be a noncompact metric graph,
    $p>2$ and
    $W$, $\rho \in L^\infty(\G)$ be nonnegative functions.
    Assume that $u \in H^1_\loc(\G) \cap L^\infty(\G)$
    is a solution to \eqref{eq:edo2}
    that is stable outside a compact set $C \subset \G$.
    Then $u' \in L^2(\G)$ and  $\rho \abs{u}^p \in L^1(\G)$.
    In particular, $u$ is uniformly continuous.
    \smallbreak

    Moreover, for every half-line $e$,
    \begin{itemize}
        \item if
        $\displaystyle\varliminf_{\substack{x\to\infty\\x\in e}}
        W(x)>0$ then $u_{\mid e}\in H^1(e)$ and
        $\displaystyle\lim_{\substack{x\to\infty\\x\in e}}
        u(x)=0$;
        \item if
        $\displaystyle\varliminf_{\substack{x\to\infty\\x\in e}}
        \rho(x)>0$ then
        $u_{\mid e} \in L^p(e)$ and
        $\displaystyle\lim_{\substack{x\to\infty\\x\in e}}
        u(x)=0$.
    \end{itemize}
\end{proposition}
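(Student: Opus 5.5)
The plan is to test the stability inequality outside $C$ with a cutoff of $u$ itself, in the spirit of Farina-type arguments for stable solutions. Since $\G \in \GGfin$ is noncompact, outside a large compact set $K \supset C$ the graph is a finite union of half-lines $e \cong [0,+\infty)$. On each such half-line, $u$ solves the ODE $-u'' + W u = \rho|u|^{p-2}u$ in the weak sense. Since $u \in L^\infty$ and $W$, $\rho$ are bounded, $u'' \in L^\infty_\loc$, so $u \in W^{2,\infty}_\loc$ and $u'$ is continuous on each edge.

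\textbf{Step 1: the Caccioppoli-type bound.} Fix $R$ large and a cutoff $\eta_R$ that equals $0$ on $K$ and near the origin of each half-line, equals $1$ on $[T, R]$, and vanishes beyond $2R$, with $|\eta_R'| \le C/R$ on $[R,2R]$ and $|\eta_R'| \le C$ on the fixed transition zone near $K$. Test the equation with $u\eta_R^2$ and stability with $\phi = u\eta_R$. The identity $\int |(u\eta)'|^2 = \int \bigl(u'(u\eta^2)' + u^2\eta'^2\bigr)$ gives
\[
\int \bigl(|(u\eta)'|^2 + W u^2\eta^2\bigr) = \int \rho|u|^p\eta^2 + \int u^2\eta'^2 .
\]
Stability gives
\[
\int \bigl(|(u\eta)'|^2 + W u^2\eta^2\bigr) \ge (p-1)\int \rho|u|^p\eta^2 .
\]
Combining the two,
\[
(p-2)\int \rho|u|^p\eta^2 \le \int u^2\eta'^2 \le C\|u\|_\infty^2\Bigl(1 + \tfrac{1}{R}\Bigr).
\]
This bound is uniform in $R$, so letting $R\to\infty$ gives $\rho|u|^p \in L^1$ outside $K$. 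On $K$ it is trivially integrable.

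\textbf{Step 2: $u' \in L^2(\G)$.} Return to the identity with $\eta_R$. The term $W u^2\eta^2 \ge 0$, the right-hand side is bounded uniformly in $R$, and $|(u\eta)'|^2$ controls $|u'|^2$ where $\eta = 1$. Hence $\int_T^R |u'|^2$ is bounded uniformly, and the same inequality also yields $\int W u^2 < \infty$ on the half-lines. Uniform continuity then follows from $|u(x)-u(y)| \le \|u'\|_{L^2}|x-y|^{1/2}$ along paths, together with the finiteness of the graph.

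\textbf{Step 3: the two bullets.} If $\varliminf W > 0$ on $e$, then $W \ge w_0 > 0$ for large $x$, and the bound $\int W u^2 < \infty$ gives $u \in L^2$ there. Together with $u' \in L^2$ this gives $u_{|e} \in H^1(e)$, hence $u \to 0$. If $\varliminf \rho > 0$, then $\int \rho|u|^p < \infty$ gives $u \in L^p$ near infinity. A uniformly continuous $L^p$ function on a half-line tends to $0$: if $|u(x_k)| \ge \epsilon$ along $x_k \to \infty$, then $|u| \ge \epsilon/2$ on intervals of fixed length around each $x_k$, contradicting integrability.

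\textbf{Main obstacle.} The delicate point is Step 1: making sure the boundary contributions near $K$ are harmless. They are, because $\eta$ vanishes on $K$, so all terms are supported on the half-lines and only the bounded term $\int u^2\eta'^2$ over fixed-size regions appears. One must also justify the test functions: $u\eta_R^2$ and $u\eta_R$ must lie in $H^1 \cap \Cc$ with support outside $C$, which holds since $u \in H^1_\loc$ and the cutoff has compact support away from $C$. Nonnegativity of $W$ is essential in Step 2, and $p > 2$ is essential in Step 1.
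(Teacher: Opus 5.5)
Your proof is correct and follows essentially the paper's argument: the paper also tests the equation with $u\eta^2$ and the stability inequality with $u\eta$ to get $(p-2)\int\rho|u|^p\eta^2 \le \int u^2\eta'^2$, and it handles the two bullets the same way you do. The only minor difference is in Step 2. There the paper uses a cutoff equal to $1$ on $B_R$ and controls the cross term $2\int u\phi u'\phi'$ by Cauchy--Schwarz, whereas you reuse the exact identity $\int |(u\eta)'|^2 = \int u'(u\eta^2)' + \int u^2\eta'^2$ with your cutoff vanishing near the core and treat the compact part via $H^1_\loc$; this is equally valid.
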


\begin{remark}
\label{rem1}
Observe that, thanks to the previous proposition,
if there exists $\epsilon>0$
such that $W(x) \geq \epsilon$ on $\G$, a solution
$u \in H^1_\loc(\G) \cap L^\infty(\G)$ to \eqref{eq:edo2}
that  is stable outside a compact set $C \subset \G$
is in fact in $H^1(\G)$.
\end{remark}

\begin{proof}[Proof of Proposition \ref{stable-outside-a-compact}]
In this proof, for $R>0$, we  denote by $B_R$ the union
of the compact core $\mathcal{K}$ of $\G$
(i.e.\ the metric subgraph of $\G$ consisting
of all the bounded edges of $\G$) and of the
  initial segments of length $R$ of all the half-lines.
The proof is inspired by~\cite{RTT-JFA1998}.

 For any $\phi \in H^1(\G) \cap \Cc(\G)$, using
  $u\phi^2$ as a test function in the weak formulation of \eqref{eq:edo2},
	we get
  \begin{equation}
    \label{eq:u-phi2}
    \int_\G (u')^2 \phi^2 + 2 \int_\G u \phi u' \phi'
    + \int_\G W(x) u^2 \phi^2
    = \int_\G \rho(x) \abs{u}^p \phi^2.
  \end{equation}
  \medbreak

  \noindent\textit{Step 1: $\rho \abs{u}^p \in L^1(\G)$.}
  \medbreak

  Let $R_0 >0$ be large enough  so that $C \subseteq B_{R_0}$.
	 For each $R > 2R_0$,
  consider a function $\phi_{1,R} \in H^1(\G) \cap \Cc(\G)$ such that
  \begin{equation*}
    \begin{cases}
      \phi_{1,R} = 0 & \text{on } B_{R_0} \cup \complement B_{2R},\\
      \phi_{1,R} = 1 & \text{on } B_R \setminus B_{2R_0},\\
      0 \le \phi_{1,R} \le 1& \text{on } \G,\\
      \abs{\phi'_{1,R}} \le 2/R_0 & \text{on } B_{2R_0},\\
      \abs{\phi'_{1,R}} \le 2/R & \text{on } \complement B_R,
    \end{cases}
  \end{equation*}
  where $\complement A$ denotes the complement
  of the set $A$. The stability condition
  tested with the function $u\phi_{1,R}$ yields
  \begin{equation*}
    \int_\G (u')^2 \phi_{1,R}^2
    + 2 \int_\G u\phi_{1,R} u'\phi_{1,R}' + \int_\G u^2 (\phi'_{1,R})^2
    +  \int_\G W(x) u^2 \phi_{1,R}^2
    \ge (p - 1) \int_\G \rho(x) \abs{u}^p \phi_{1,R}^2.
  \end{equation*}
Thanks to~\eqref{eq:u-phi2},
 we obtain
  \begin{equation}
   \label{eq:maj-by-L2}
   \int_\G u^2 (\phi'_{1,R})^2
    \ge (p-2) \int_\G \rho(x) \abs{u}^p \phi_{1,R}^2
    \ge (p-2) \int_{B_R \setminus B_{2R_0}} \rho(x) \abs{u}^p .
    \end{equation}
    Let us denote $D$ a positive constant depending on $R_0$, $u$,
    $\G$ but independent of $R$ that may change at every
    occurrence.   We
  observe that the left-hand of \eqref{eq:maj-by-L2} can be bounded as follows
  \begin{align*}
    \int_\G u^2 (\phi'_{1,R})^2
    &= \int_{B_{2R_0} \setminus B_{R_0}} u^2 (\phi'_{1,R})^2
      + \int_{B_{2R} \setminus B_{R}} u^2 (\phi'_{1,R})^2 \\
    &\le \frac{4}{R_0^2} \int_{B_{2R_0} \setminus B_{R_0}} u^2
      + \frac{4}{R^2} \int_{B_{2R} \setminus B_{R}} u^2 \\
    &\le D \Bigl(1 + R^{-2} \int_{B_{2R} \setminus B_{R}} u^2 \Bigr).
  \end{align*}
  Recalling that $u$ belongs to $L^{\infty}(\G)$ and that there are finitely
  many half-lines, one obtains
  \begin{equation*}
    \int_\G u^2 (\phi'_{1,R})^2
    \le D \bigl(1 + R^{-2} R \bigr)
    \le 2D.
  \end{equation*}
  Therefore, passing to the limit $R \to +\infty$
  in~\eqref{eq:maj-by-L2} shows that $\rho \abs{u}^p \in L^1(\G)$.

  \medskip

  \noindent\textit{Step 2: $u'\in L^2(\G)$.}
  \medbreak

  Now let us consider the functions $\phi_{2,R} \in H^1(\G) \cap \Cc(\G)$ such that
  \begin{equation*}
    \begin{cases}
      \phi_{2,R} = 1 & \text{on } B_R,\\
      \phi_{2,R} = 0 & \text{on } \complement B_{2R},\\
      0 \le \phi_{2,R} \le 1& \text{on } \G,\\
      \abs{\phi'_{2,R}} \le 2/R & \text{on } \G.
    \end{cases}
  \end{equation*}
Again, using the fact that $u\in L^{\infty}(\G)$, we establish
  \begin{equation*}
    \int_\G u^2 (\phi'_{2,R})^2=
    \int_{B_{2R}\setminus B_R} u^2 (\phi'_{2,R})^2
    \le \frac{4}{R^2} \int_{B_{2R} \setminus B_{R}} u^2
    \le \frac{D}{R}
    \xrightarrow[R \to +\infty]{} 0.
  \end{equation*}
  The Cauchy-Schwarz inequality then implies that for every $\delta>0$, for $R$ large enough
  \begin{equation*}
    \biggabs{\int_\G u\phi_{2,R} u'\phi'_{2,R}}
    \le \biggl(\int_\G u^2 (\phi'_{2,R})^2 \biggr)^{1/2}
    \biggl( \int_\G (u')^2 \phi_{2,R}^2 \biggr)^{1/2}
    \le \delta \biggl( \int_\G (u')^2 \phi_{2,R}^2 \biggr)^{1/2} .
  \end{equation*}
  Using~\eqref{eq:u-phi2} with  $\phi_{2,R}$ and the above estimate
  yields
  \begin{equation}
  \label{borne u L2}
    \int_\G (u')^2 \phi_{2,R}^2
    - 2\delta \biggl( \int_\G (u')^2 \phi_{2,R}^2 \biggr)^{1/2}
    +  \int_\G W(x) u^2 \phi_{2,R}^2
    \le \int_\G \rho(x) \abs{u}^p \phi_{2,R}^2
    \le \int_\G \rho(x) \abs{u}^p<\infty
  \end{equation}
  by Step 1.
  In particular, as $W\geq 0$,
  $$
  \int_{B_R} (u')^2 \le \int_\G (u')^2 \phi_{2,R}^2 \le D,
  $$
  and, letting $R \to +\infty$, we conclude that $u' \in L^2(\G)$.

  \medskip

  \noindent\textit{Step 3: If $e$ is a half-line and
    $\displaystyle\varliminf_{\substack{x\to\infty\\x\in e}} W(x)>0$
    then $u\in H^1(e)$ and
    $\displaystyle\lim_{\substack{x\to\infty\\x\in e}} u(x)=0$;}
  \medbreak

The fact that $u\in H^1(e)$ can be deduced from \eqref{borne u L2} and every function in $H^1(e)$ satisfies $\displaystyle\lim_{\substack{x\to\infty\\x\in e}} u(x)=0$.
  \medbreak

  \noindent{\it Step 4: If $e$ is a half-line and
    $\displaystyle\varliminf_{\substack{x\to\infty\\x\in e}} \rho(x)>0$
    then
    $u \in L^p(e)$ and
    $\displaystyle\lim_{\substack{x\to\infty\\x\in e}} u(x)=0$.}
  \medskip

  The first claim follows directly from $\rho \abs{u}^p \in
  L^1$ and the embedding of $H^1_\loc(\G)$ into $L^p_\loc(\G)$.
  For the second one, if it did not hold, observing that $u' \in
  L^2(e)$ implies that the function $u$ is uniformly
  continuous, there would exist $\epsilon > 0$,
  $\delta > 0$ and a sequence $(x_n)$  such
  that, for all $n$,  $\abs{u} > \epsilon$ on $[x_n - \delta, x_n + \delta]$.
  Moreover, all $[x_n - \delta, x_n + \delta]$ are disjoint.  This is a
  contradiction with $\rho \abs{u}^p \in L^1(\G)$.
\end{proof}

We now give a sufficient condition for the stability of a solution outside a compact set.

\begin{lemma}
    \label{Stable via Morse}
    Let $\G \in \GG$, $p>2$ and
    $W$, $\rho \in L^\infty(\G)$ be  nonnegative functions.
    Any solution $u\in H^1_\loc(\G)$ to \eqref{eq:edo2}
    with finite Morse index is stable outside a compact set.
\end{lemma}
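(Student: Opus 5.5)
The argument is by contradiction, using only the finiteness of the Morse index $m \coloneq \morse(u) < \infty$ and the structure of $\G \in \GG$. Suppose $u$ is not stable outside any compact set. We will build $m+1$ test functions in $H^1(\G)\cap \Cc(\G)$ with pairwise disjoint supports, each making $Q_u$ negative. Their span then has dimension $m+1$, and $Q_u$ is negative definite on it, which contradicts the definition of $\morse(u)$.

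The construction is inductive. Fix a vertex $\vv_0$ and, for $R>0$, let $B_R$ be the closed ball of radius $R$ around $\vv_0$ for the distance $\dist$. Since every vertex has finite degree and edge lengths are bounded below, only finitely many edges meet $B_R$, so $B_R$ is compact. Because $u$ is not stable outside $B_{R_1}$ (take $R_1=1$), there is $\phi_1 \in H^1(\G)\cap\Cc(\G)$ with $\supp\phi_1 \subseteq \G\setminus B_{R_1}$ and $Q_u(\phi_1;\G)<0$. Its support is compact, so it is contained in some $B_{R_2'}$. Put $R_2 \coloneq R_2'+1$. Since $u$ is not stable outside the compact set $B_{R_2}$, we obtain $\phi_2$ supported in $\G\setminus B_{R_2}$ with $Q_u(\phi_2;\G)<0$. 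Iterating $m+1$ times produces $\phi_1,\dots,\phi_{m+1}$ with pairwise disjoint supports. This step has a small technical point: the compact set $C$ in the definition must be allowed to be an arbitrary compact set, and balls in $\G$ are compact. Both facts follow from local finiteness together with the lower bound on edge lengths.

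It remains to check negativity on the span. For $\phi=\sum_i c_i\phi_i$ with disjoint supports, every term in $Q_u$ is an integral of a local quantity: $\abs{\phi'}^2$, $W\abs{\phi}^2$, and $\rho\abs{u}^{p-2}\abs{\phi}^2$. Cross terms therefore vanish almost everywhere, and
\[
Q_u(\phi;\G)=\sum_{i=1}^{m+1} c_i^2\, Q_u(\phi_i;\G),
\]
which is strictly negative whenever $(c_1,\dots,c_{m+1})\neq 0$. The $\phi_i$ are linearly independent because their supports are disjoint and each is nonzero (indeed $Q_u(\phi_i;\G)<0$ forces $\phi_i\neq 0$). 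This gives a subspace of dimension $m+1$ on which $Q_u<0$, contradicting $\morse(u)=m$.

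The nonnegativity of $W$ and $\rho$ plays no role in this argument; only the finite Morse index and local finiteness are used. No step is genuinely difficult. The only points needing care are the compactness of balls in $\G\in\GG$ and the fact that $H^1(\G)\cap\Cc(\G)$ is closed under finite linear combinations, which holds trivially.
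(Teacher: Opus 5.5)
Your argument is correct and uses the same mechanism as the paper: test functions with disjoint supports make $Q_u$ additive, so too many disjointly supported negative directions contradict the finiteness of $\morse(u)$. The paper's version is slightly more economical. It takes $C$ to be the union of the supports of a basis of a maximal negative subspace and adds a single extra test function, which makes your detour through the compactness of balls in $\G$ unnecessary.
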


\begin{proof}
If $\morse(u) =0$ there is nothing to prove.

Otherwise, there exist functions
  $\phi_1, \dotsc, \phi_{\morse(u)}$ in $ H^1(\G) \cap
  \Cc(\G)$ that form a basis of a space $X$ of maximal dimension $\morse(u)$ such that
  $$
  \forall \phi \in X \setminus\{0\},\quad Q_u(\phi; \G) < 0.
  $$
  From here we can deduce that $u$ is stable outside the compact set defined by $C\coloneq\bigcup_{j=1}^{\morse(u)} \supp \phi_j$. Indeed, suppose this were not the case. Then take $\psi \in H^1(\G) \cap
  \Cc(\G)$ such that $\supp \psi \subseteq \G \setminus C$ and $Q_u(\psi; \G) < 0$. The function $\psi$  has disjoint
  support with $\phi_1,\dotsc,\phi_{\morse(u)}$ and the space
  $X^* \coloneq \spanned(X \cup \{\psi\})$  would be a space of dimension $\morse(u)+1$
  such that $\forall \phi \in X^* \setminus\{0\},\ Q_u(\phi; \G) < 0$, contradicting the maximality of the Morse index.
\end{proof}

\begin{remark}
Observe that by Remark \ref{rem1} and Lemma \ref{Stable via Morse},
under the assumptions of Theorem \ref{main},
assuming $u_n \in H^1(\G)$
is equivalent to assuming
$u_n \in H^1_\loc(\G) \cap L^\infty(\G)$
for $n$ large.
\end{remark}

Next we present a result stating a sufficient condition to guarantee the positivity of the Morse index.

\begin{lemma}
  \label{Morse index>=1}
  Let $\G \in \GG$, $p > 2$,
  $W$, $\rho \in L^\infty(\G)$ with  $W > 0$
 and $u \in H^1(\G)\backslash \{0\}$ be a solution to \eqref{eq:edo2}.
  Then the Morse index of $u$ is at least~$1$.
\end{lemma}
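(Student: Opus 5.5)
The plan is to test $Q_u$ with $u$ itself, made compactly supported by truncation. Since $u\in H^1(\G)$ is a solution, I will use $u$ as a test function in the weak formulation~\eqref{eq:weak-formulation}. This is legitimate by density of $H^1(\G)\cap\Cc(\G)$ in $H^1(\G)$ together with $W,\rho\in L^\infty(\G)$ and $u\in L^\infty(\G)$ (which follows from $H^1$ on graphs in $\GG$). It gives
\begin{equation*}
  \int_\G \bigl(\abs{u'}^2 + W u^2\bigr)\intd x = \int_\G \rho\abs{u}^p \intd x .
\end{equation*}
Then I extend $Q_u$ continuously to $H^1(\G)$; this is possible because the coefficients $W$ and $\rho\abs{u}^{p-2}$ are bounded. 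Evaluating at $u$ yields
\begin{equation*}
  Q_u(u;\G) = \int_\G \rho\abs{u}^p - (p-1)\int_\G\rho\abs{u}^p = -(p-2)\int_\G \rho\abs{u}^p .
\end{equation*}

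The main point is to show this is strictly negative. Suppose instead $\int_\G\rho\abs{u}^p=0$. The identity above then gives $\int_\G(\abs{u'}^2+Wu^2)=0$. Since $W>0$ a.e., this forces $u=0$ a.e., hence $u\equiv 0$ by continuity, contradicting $u\neq0$. So $Q_u(u;\G)<0$. Here I read the hypothesis $W>0$ as pointwise a.e. positivity; that is all the argument needs.

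Finally I must pass to a genuine test function in $H^1(\G)\cap\Cc(\G)$. Choose $\phi_k\in H^1(\G)\cap\Cc(\G)$ with $\phi_k\to u$ in $H^1(\G)$; for instance $\phi_k=u\,\eta_k$ with cutoffs $\eta_k$ as in the proof of Proposition~\ref{stable-outside-a-compact}. Then $Q_u(\phi_k;\G)\to Q_u(u;\G)<0$, because $Q_u$ is a continuous quadratic form on $H^1(\G)$: $\abs{\int W\phi^2}\le \abs{W}_\infty\abs{\phi}_2^2$, and similarly for the $\rho\abs{u}^{p-2}$ term. Hence $Q_u(\phi_k;\G)<0$ for some $k$, so the one-dimensional space spanned by $\phi_k$ witnesses $\morse(u)\ge1$.

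I expect no real obstacle. The only care points are justifying $u$ as a test function in the weak formulation (density plus boundedness of the coefficients) and the continuity argument in the last step.
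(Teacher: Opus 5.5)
Your proof is correct and follows the paper's argument. You test with $u$ via density or truncation to get $\int_\G(\abs{u'}^2+Wu^2)=\int_\G\rho\abs{u}^p$, deduce $Q_u(u;\G)=-(p-2)\int_\G(\abs{u'}^2+Wu^2)<0$ from $W>0$ and $u\neq 0$, and then approximate $u$ in $H^1(\G)$ by compactly supported functions. One small point: the cutoffs from the proof of Proposition~\ref{stable-outside-a-compact} are built for graphs in $\GGfin$, so for a general $\G\in\GG$ (possibly with infinitely many bounded edges) you should instead invoke the density of $H^1(\G)\cap\Cc(\G)$ in $H^1(\G)$ recalled after Definition~\ref{def:Morse index}, or use cutoffs depending on $\dist(x,x_0)$.
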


\begin{proof}
  Using a truncation of $u$ as test function $\varphi$
  in~\eqref{eq:weak-formulation} and passing to the limit yields
  $$
  \int_\G \big(\abs{u'}^2 + W(x) u^2\big) \intd x = \int_\G \rho(x) |u|^p \intd x
  $$
  from which it follows that
  $$
  Q_u(u;\G) = - (p-2) \int_\G \big(\abs{u'}^2 + W(x) u^2\big) \intd x < 0.
  $$
  So if by density we take
  $\phi \in H^1(\G) \cap \Cc(\G)$ sufficiently close to $u$, we
  have $Q_u(\phi; \G) < 0$.
  \end{proof}

\begin{remark}
  \label{rem pos}
  In fact, the assumption $W>0$ may be weakened by assuming for
  example that, for all
$u\in H^1(\G)\setminus\{0\}$,
  $$
  \int_\G \big(\abs{u'}^2 + W(x) u^2\big) \intd x > 0 .
  $$
  If $\G$ is compact, this means that the first eigenvalue $\xi_1$ of
 \begin{equation*}
  \begin{cases}
    -u'' + W(x) u  =  \xi u
    &\text{on every edge } e \in \mathcal{E},\\
    u \text{ is continuous on }\G,
    \\[1\jot]
		 \displaystyle
    \sum_{\edge \incident \vv} \dd{u_{\edge}}{x}(\vv)=0
    &\text{at every vertex } \vv \in \V,
  \end{cases}
\end{equation*}
satisfies $\xi_1>0$.
\end{remark}

\begin{corollary}[Liouville theorem for $W\equiv 0$ on star graphs]
    \label{Liouville-lambda=0}
    Let $\G_k$ be a star graph with $k$ half-lines, $p>2$,
    $\rho \in L^{\infty}(\G_k)$ be a non-negative function
    such that, for some constant $a>0$,
    for every $i\in\{1,\ldots,k\}$, either, $\rho(x)\geq a$
    on $e_i$ or $\rho\equiv 0$ on $e_i$.
    Assume that
    $\rho\geq a$ on at least one half-line.

    If $u \in H^1_\loc(\G_k) \cap L^\infty(\G_k)$
    is a solution to~\eqref{eq:edo2} with $W \equiv 0$
    such that $\morse(u) < \infty$, then $u \equiv 0$.
\end{corollary}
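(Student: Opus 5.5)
Since $\morse(u)<\infty$ and $W\equiv 0$, $\rho\ge 0$, Lemma~\ref{Stable via Morse} shows that $u$ is stable outside a compact set. Because $u\in L^\infty(\G_k)$, Proposition~\ref{stable-outside-a-compact} gives $u'\in L^2(\G_k)$ and $\rho\abs{u}^p\in L^1(\G_k)$. On every half-line where $\rho\ge a$ it also gives $u\in L^p(e)$ and $u(x)\to 0$ as $x\to\infty$.

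The argument then uses the ODE on each edge. On an edge with $\rho\equiv 0$ we have $u''=0$, so $u$ is affine there; as $u$ is bounded, $u$ is constant on that half-line, equal to $u(\vv)$, with $u'_e(0)=0$. On an edge with $\rho\ge a$, the equation $u''=-\rho\abs{u}^{p-2}u$ together with $u'\in L^2$ and $u\to 0$ lets me control $u'$ at infinity. Since $u''$ is bounded, $u'$ is uniformly continuous, and with $u'\in L^2$ this gives $u'(x)\to0$.

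The main step is a global integration identity. I test the weak formulation with $u\phi_R$, where $\phi_R$ is the cutoff $\phi_{2,R}$ from the proof of Proposition~\ref{stable-outside-a-compact}, and obtain
\[
\int_{\G_k} (u')^2\phi_R + \int_{\G_k} u u'\phi_R' = \int_{\G_k}\rho\abs{u}^p\phi_R .
\]
On the $\rho\ge a$ edges the term $\int u u'\phi_R'$ tends to $0$ because $u\to0$ and $u'\in L^2$. On the $\rho\equiv0$ edges it vanishes, since $u'=0$ there. Hence
\[
\int_{\G_k}(u')^2=\int_{\G_k}\rho\abs{u}^p .
\]
Next I use stability outside a compact set with the test function $u\psi_R$, where $\psi_R$ vanishes near the vertex and cuts off at infinity, as in Step~1 of that proof. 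This gives
\[
(p-2)\int\rho\abs{u}^p\psi_R^2\le\int u^2(\psi_R')^2 .
\]
The difficulty is the inner cutoff near the compact set $C$, which contributes an $O(1)$ term. To remove it I would use the scaling invariance: with $W\equiv0$ on a star graph, $u_\mu(x)=\mu^{2/(p-2)}u(\mu x)$ is again a solution with the same Morse index, since the star graph and the piecewise-constant type of $\rho$ are dilation invariant.

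A more concrete route avoids scaling. On each $\rho\ge a$ edge, the solution decays to $0$ at infinity, while the energy $E=\tfrac12(u')^2+\tfrac{\rho}{p}\abs{u}^p$ behaves monotonically when $\rho$ is constant. I will try to treat $\rho$ in the conserved-quantity form below; the general $L^\infty$ case is the hardest point. For constant $\rho$ on an edge, $E$ is constant and tends to $0$, so $E\equiv0$ and hence $u\equiv0$ on that edge. Then $u(\vv)=0$, so the $\rho\equiv0$ edges are also zero, and $u\equiv 0$.

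For nonconstant $\rho$ I would instead use finite Morse index through oscillation. If $u\not\equiv0$ on a $\rho\ge a$ half-line, then $u$ must oscillate infinitely often. By Sturm comparison with $u''=-\rho\abs{u}^{p-2}u$, a decaying nonzero solution with a positive tail is impossible: in that case $u''<0$ while $u>0$ and $u\to0$, forcing $u'>0$ and hence $u$ increasing to $0$ while positive, a contradiction. So $u$ has infinitely many nodal intervals $I_j$. On each of these, $\phi=u\chi_{I_j}$ gives
\[
Q_u(\phi)=-(p-2)\int_{I_j}\rho\abs{u}^p<0 .
\]
These test functions have disjoint supports, so the Morse index is infinite, a contradiction. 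This last argument is the cleanest and I would make it the core of the proof. It also covers the case where the zero set lies on the edge with $\rho\equiv0$, via the vertex, as explained above.
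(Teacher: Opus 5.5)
Your core argument is the paper's proof. Lemma~\ref{Stable via Morse} and Proposition~\ref{stable-outside-a-compact} give $u\to0$ on every half-line where $\rho\ge a$. Infinitely many bounded nodal intervals $I_j$ there give test functions $u\chi_{I_j}$ with disjoint supports and $Q_u<0$, which contradicts $\morse(u)<\infty$. The half-lines where $\rho\equiv0$ are then constant, equal to $u(\vv)=0$.

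The difference is in how the zeros are produced. The paper builds the successive zeros by an explicit induction from a maximum of $\abs{u}$ on the edge, and each zero it constructs has nonzero slope. Your concavity argument only excludes a tail of strict constant sign. To get infinitely many sign changes you must add that every zero is simple, by uniqueness for the Cauchy problem (as in Lemma~\ref{lower-bound-max|u|}). You should also discard the nodal interval touching the vertex, since $u\chi_{I_j}$ is not continuous there.

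The energy, scaling and integration-identity routes are not needed. The scaling idea also does not preserve $\rho$ itself, only the type of $\rho$ on each edge.
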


\begin{proof} Recall that, by Lemma \ref{Stable via Morse} and Proposition
\ref{stable-outside-a-compact}, $u'\in L^2(\G)$, $\rho |u|^p \in L^1(\G)$ and, for all edges $e$ such that $\rho(x)\geq a$ on $e$, we have
$\displaystyle\lim_{\substack{x\to\infty\\x\in e}} u(x)=0$.
\medbreak

Let us consider the restriction of $u$ on an edge $e$ where $\rho(x)\geq a$ on $e$. We shall prove that $u\equiv 0$ on $e$.

If $u\not\equiv0$ on $e$, we have $\tilde x_1\in [0,+\infty)$ such that
$\abs{u(\tilde x_1)} = \max_{e} \abs{ u} >0$.
 Without loss of generality, assume that
$\abs{u(\tilde x_1)}=u(\tilde x_1)$.
Observe that there exists $\delta_1>0$ such that,
for $x\in [\tilde x_1, \tilde x_1+\delta_1]$, we have
$u(x) > 0$ and
$u''(x)<-a \, \big|\frac{u(\tilde x_1)}{2}|^{p-1}$.
By integration, as $u'(\tilde x_1)\leq 0$, we obtain
$u'(\tilde x_1+\delta_1)<-a \,\big|\frac{u(\tilde x_1)}{2}\big|^{p-1} \delta_1$.
Moreover, for every $x>\tilde x_1+\delta_1$ such that $u(x)>0$, we have $u''(x)<0$, hence
$u'(x)<u'(\tilde x_1+\delta_1)<-a \,\big|\frac{u(\tilde x_1)}{2}|^{p-1} \delta_1$.
This implies the existence of $x_1>\tilde x_1$ such that
$u(x_1)=0$ and $u(x)>0$ for $x\in [\tilde x_1, x_1)$ and $u'(x_1)<u'(\tilde x_1+\delta_1)<-a \,\big|\frac{u(\tilde x_1)}{2}\big|^{p-1} \delta_1$.

Let us prove that there exists $x_2>x_1$ such that $u(x_2)=0$ and $u(x)<0$ on $(x_1,x_2)$.

As $\displaystyle\lim_{\substack{x\to\infty\\x\in e}} u(x)=0$, we have $\tilde x_2$ such that $u'(x)<0$ on
$(x_1, \tilde x_2)$ and   $u'(\tilde x_2)=0$.
Moreover, there exists $\delta_2>0$ such that,
for $x\in [\tilde x_2, \tilde x_2+\delta_2]$ we have
$u(x) < 0$ and
$u''(x)>a \,\big|\frac{u(\tilde x_2)}{2}\big|^{p-1}$.
Hence, by integration and as $u'(\tilde x_2)=0$, this implies
$u'(\tilde x_2+\delta_2)>a \,\big|\frac{u(\tilde x_2)}{2}\big|^{p-1} \delta_2$.
Moreover, for every $x>\tilde x_2+\delta_2$ such that $u(x)<0$, we have $u''(x)>0$,
hence $u'(x)> u'(\tilde x_2+\delta_2)>a \,\big|\frac{u(\tilde x_2)}{2}|^{p-1} \delta_2$.
This implies the existence of $x_2>\tilde x_2$ such that $u(x_2)=0$ and $u(x)<0$ for
$x\in [\tilde x_2, x_2)$ and
$u'(x_2)>u'(\tilde x_2+\delta_2)>a \,\big|\frac{u(\tilde x_2)}{2}|^{p-1} \delta_2$.

By induction, we prove the existence of a sequence $(x_n)$ such that $u(x_n)=0$ and $|u(x)|>0$ on $(x_{n-1},x_n)$.
\medbreak

Now consider the functions $\varphi_n=u|_{[x_{n-1},x_n]}$ extended by $0$ on the rest of
the graph. Observe that for all $n$
  \begin{align*}
      0=\int_\G u'\,\varphi_n' \intd x
      - \int_\G \rho(x) |u|^{p-2} u \,\varphi_n \intd x
      &=
          \displaystyle
  	\int_{x_{n-1}}^{x_n} |u'|^2 \intd x
          - \int_{x_{n-1}}^{x_n} \rho(x) |u|^p \intd x
      \\
      &=
  	\int_\G |\varphi_n'|^2  \intd x
          - \int_\G \rho(x) |u|^{p-2} |\varphi_n|^2 \intd x
  \end{align*}
  from which it follows that
  $$
  Q_u(\varphi_n;\G)
  = - (p-2) \int_\G \rho(x) |u|^{p-2} |\varphi_n|^2 \intd x < 0.
  $$
This contradicts $\morse(u)  < \infty$ and proves that $u\equiv 0$ on $e$.

Hence $u\equiv 0$ on  every edge where $\rho\geq a$.
\medbreak

  On the half-lines $\e$ where $\rho \equiv 0$, as $u$ satisfies $u''=0$ and
  $u' \in L^2(\e)$, we deduce that $u$ must be constant.
  Since $u$ vanishes at least on one half-line and there is one common
  value to all the half-lines,
  we deduce that $u \equiv 0$ on~$\G_k$.
\end{proof}

The following result will be key to derive \eqref{eq:exp-bound}.

\begin{proposition}[Maximum Principle]
    \label{order-preserving-principle}
    Let $\G \in \GGfin$,
    $W \in L^{\infty}_\loc(\G)$ be a
    nonnegative function and $\phi \in \mathcal{C}(\G)$
    be a function such that,
    for every edge $e$, we have
    $\phi\in W^{2,\infty}_\loc(e)$
    (i.e.\ if $e$ is a bounded edge then
    $\phi\in W^{2,\infty}(e)$, while
    if $e$ is the half-line $[0,+\infty)$
    then for every $R>0$,  $\phi\in W^{2,\infty}(0,R)$).
    Assume that
    \begin{enumerate}[(i)]
    \item\label{non-neg} $-\phi'' + W(x)\phi \ge 0 $
        in every edge $e$ of $\G$;
    \item\label{non-pos-at-vertices} for every vertex $\vv \in \V$, \quad
        $\phi(\vv) < 0 \ \Rightarrow\
        \displaystyle
        \sum_{\edge \incident \vv}
        \dd{\phi_{\edge}}{x}(\vv)\leq 0$.
    \end{enumerate}
    Then one of the following three possibilities applies:
    \begin{enumerate}[(a)]
        \item $\phi \geq 0$ on $\G$;
        \item there exists $C<0$
        such that $\phi\equiv C$ and $W\equiv 0$ on $\G$;
        \item $\inf_{\G}\phi=-\infty$.
    \end{enumerate}
    In the third case, $\G$ contains at least
    one half-line along which
    $\displaystyle \lim_{x\to\infty} \phi=-\infty$.
\end{proposition}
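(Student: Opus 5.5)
Assume (a) and (c) both fail, i.e.\ $m\coloneq\inf_\G\phi\in(-\infty,0)$; the aim is to reach (b), and separately to show that in case (c) $\phi\to-\infty$ along some half-line.

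The basic tool is a one-dimensional convexity fact. On any edge $e$, on the open set $\{\phi<0\}$ we have $\phi''\le W\phi\le 0$ a.e., so $\phi$ is concave there, and it is $C^1$ since $\phi\in W^{2,\infty}_\loc$.

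First I would show that the infimum $m$ is attained. On the compact core this is automatic, so suppose instead that $\phi$ only approaches $m$ along a half-line $e=[0,\infty)$. Take $x_k\to\infty$ with $\phi(x_k)\to m<0$. Then $\phi<0$ on a neighbourhood of these points, and a concave function bounded below on a half-line where it stays negative must be nonincreasing there. Combined with the lower bound $m$, this makes $\phi'\to0$ and $\phi$ monotone nonincreasing eventually. More precisely, on each component of $\{\phi<0\}\cap e$ the function $\phi$ is concave. An unbounded component $(c,\infty)$ forces $\phi'\le0$ and $\phi$ nonincreasing toward $m$. Bounded components, which have $\phi=0$ at both ends, can only reach depths that are already attained. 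So either $m$ is attained, or $\phi$ decreases to $m$ on $(c,\infty)$. In the latter case concavity and boundedness below force $\phi'\equiv0$ there, since a concave nonincreasing function with $\phi'(x_0)<0$ at some point tends to $-\infty$. Hence $\phi$ is constant on that tail and $m$ is attained after all. The same argument gives the last assertion: if $\inf\phi=-\infty$, the infimum cannot be approached on the compact core, so some half-line has $\phi(x_k)\to-\infty$. On its unbounded negative component $\phi$ is concave, hence eventually monotone, so $\lim\phi=-\infty$.

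Now let $x_0$ be a point where $\phi(x_0)=m<0$, and let $Z=\{x:\phi(x)=m\}$, which is closed. I claim $Z$ is open. If $x_0$ is interior to an edge, then $\phi'(x_0)=0$ and $\phi$ is concave near $x_0$ with $\phi\ge m$. Concavity together with a minimum at an interior point gives $\phi\equiv m$ near $x_0$. If $x_0=\vv$ is a vertex, every outgoing derivative satisfies $\dd{\phi_\edge}{x}(\vv)\ge0$ because $\vv$ is a minimum. Hypothesis \ref{non-pos-at-vertices} then makes all of these derivatives vanish. On each incident edge $\phi$ is concave near $\vv$ with zero derivative at $\vv$, so $\phi\le m$ there, and since $\phi\ge m$ we get $\phi\equiv m$ near $\vv$. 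By connectedness $Z=\G$, so $\phi\equiv m$. Then $0\le-\phi''+W\phi=Wm$ a.e., which forces $W\equiv0$; this is (b).

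I expect the main obstacle to be the attainment step on half-lines, together with the bookkeeping over the components of $\{\phi<0\}$ on an edge. I would handle both with the elementary fact that a concave $C^1$ function on an interval is either monotone or has a single interior maximum, and is therefore unbounded below on an unbounded interval unless it is nondecreasing.
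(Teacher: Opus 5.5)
Your overall architecture is the paper's: concavity of $\phi$ on $\{\phi<0\}$, attainment of $m=\inf_\G\phi$, hypothesis (ii) forcing every outgoing derivative to vanish at a minimizing vertex, and propagation of $\phi\equiv m$ by connectedness. Your open--closed argument for $Z$ is a clean version of the paper's edge-by-edge iteration, and the deduction $W\equiv 0$ is fine.

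The gap is the attainment step on a half-line, which is exactly the content of the paper's Step~1. As written, it does not work, for three reasons.
\begin{itemize}
\item The claim that a concave function bounded below on a half-line where it stays negative ``must be nonincreasing'' is false; $-1-\e^{-x}$ is a counterexample. Concavity plus a lower bound on an unbounded interval gives $\phi'\ge 0$, as your own last paragraph says.
\item On a component $(c,\infty)$ with $\phi(c)=0$ you do get $\phi'\le 0$, but this comes from $\phi(c)=0$, not from the lower bound. Combined with the lower bound it gives $\phi\equiv 0$ there, so such a component cannot occur at all. Meanwhile your case analysis never treats the component containing the vertex when $\phi(0)<0$, in particular $\phi<0$ on all of $e$. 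That is the only unbounded case that can actually occur, and there $\phi$ is nondecreasing, not nonincreasing.
\item ``Bounded components can only reach depths that are already attained'' does not prevent infinitely many such components from escaping to infinity with depths tending to $m$.
\end{itemize}
So the dichotomy ``either $m$ is attained or $\phi$ decreases to $m$ on $(c,\infty)$'' is not established.

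The repair uses only the facts you list at the end.
\begin{itemize}
\item A concave function on a compact interval attains its minimum at an endpoint. Hence no component of $\{\phi<0\}$ inside an edge can have $\phi=0$ at both ends.
\item For a component $(c,\infty)$ with $\phi(c)=0$, the mean value theorem gives some $\xi>c$ with $\phi'(\xi)<0$. Concavity then gives $\phi'\le\phi'(\xi)$ on $(\xi,\infty)$, so $\phi\to-\infty$.
\end{itemize}
Thus, when $\inf_\G\phi>-\infty$, the only possible negative component on a half-line $e$ is the one containing the vertex, and $\phi$ is nondecreasing on it if it is unbounded. Hence $\inf_e\phi<0$ implies $\inf_e\phi=\phi(0)$. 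This is the paper's Step~1; the paper obtains it by choosing $\bar x$ with $\phi(\bar x)<\min\{0,\phi(0)\}$ and $\phi'(\bar x)<0$ and running the same concavity argument. With this, $m$ is attained, even at a vertex, and the rest of your proof goes through.

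Your proof of the last assertion silently needs the same observation. It is what guarantees that the points $x_k$ with $\phi(x_k)\to-\infty$ eventually lie in a single unbounded negative component; after that, your monotonicity argument is correct.
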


\begin{remark}
    Observe that our regularity assumptions on $\phi$
    implies that $\phi$ is $\mathcal{C}^1$ on each edge
    of $\G$ up to its boundary.
\end{remark}

\begin{proof}
    Assume by contradiction that $\phi$ is not constant
    and $-\infty<\inf_{\G}\varphi<0$.
    \medbreak

    \noindent\textit{Step 1: For every half-line $e_0$,
    if $\inf_{e_0}\varphi<0$
    then $\inf_{e_0}\varphi=\varphi(0)$.}
    \medbreak

    Otherwise, there exists an half-line $e$ and $\bar x\in e$ such
    that $\varphi(\bar x) < \min\{0,\varphi(0)\}$ and $\varphi'(\bar
    x)<0$ (with the parameterization of $e$ starting at the vertex~$0$).

    Let $I \coloneq \{x>\bar x \mid \varphi(x)<0\}$.  By assumption~(i), we
    have that, for a.e.\ $x\in I$,
    $\varphi''(x)\leq 0$.
    This implies that $I=(\bar x, +\infty)$ and
    that for all $x\in I$, $\varphi'(x)\leq \varphi'(\bar x)<0$.
    Thus, along the edge $e$, we have $\displaystyle\lim_{x\to\infty} \varphi(x)=-\infty$ which contradicts $\inf_{\G}\varphi > -\infty$.
    \medbreak

    \noindent\textit{Step 2: For every bounded edge $e$, if $\inf_{e}\varphi<0$ and the vertices of $e$ are $\vv_1$, $\vv_2$ then  $\inf_{e}\varphi=\min(\varphi(\vv_1), \varphi(\vv_2))$.}
    \medskip

    Let $e$ be a bounded edge with $\inf_{e}\varphi<0$. As $e$ is compact, there exists $x_0\in e$ such that $\varphi(x_0)=\min_{e}\varphi<0$. Let us prove that $x_0$ is a vertex of $e$.

    Otherwise, if $x_0$ is in the interior of $e$, $\varphi'(x_0)=0$ and there exist $x_1<x_0<x_2$ such that, for all $x\in(x_1,x_2)$, $\varphi(x)<0$.
    This implies that, for all $x\in(x_1,x_2)$, 
    \begin{equation*}
      \varphi(x_0)
      \le \varphi(x)
      \le \varphi(x_0)
      + \int_{x_0}^x \int_{x_0}^t W(s)\varphi(s) \intd s\intd t
      \le \varphi(x_0)
    \end{equation*}
    where the second inequality results from assumption~\ref{non-neg}.
    We conclude that $\varphi$ is constant on $(x_1,x_2)$.
    Thus, we have $(x_1,x_2)=e$ and $W\equiv 0$ on $e$. In particular,
    $\min_e\varphi=\varphi(\vv_1)=\varphi(\vv_2)$.
    \medbreak

    \noindent\textit{Step 3: there exists $x_0\in \V$ such that
      $\inf_{\G}\varphi = \min_{\G}\varphi=\varphi(x_0)$.}
    \medskip

    Since $\inf_\G \varphi < 0$, it suffices to take the
    infimum on all the edges $e$ where $\inf_e \varphi < 0$.  By
    steps~1 and~2, the infimum on one of such edge $e$ is achieved
    at a vertex $\vv \in \V$.  Thus
    $\inf_\G \varphi = \inf_\V \varphi$ and it is achieved
    because $\V$ is finite.

    \medbreak

    \noindent\textit{Conclusion.}
    \medskip

    By Step 3, there exists $x_0\in \mathcal V$ such that
    $\varphi(x_0) = \min_{\G}\varphi$.  Thus, for all $e
    \incident x_0$,  $\dd{\varphi_{\edge}}{x}(x_0)\ge 0$.
    As $\phi(x_0) = \inf_\G \varphi < 0$,
    assumption~\ref{non-pos-at-vertices} implies that
    for all $e \incident x_0$,  $\dd{\varphi_{\edge}}{x}(x_0)= 0$.

    As in Step  2, this implies that $\varphi(x)\equiv \varphi(x_0)$ and $W\equiv 0$ on all the edges  $e \incident x_0$.

    Since $\G$ is connected, iterating the procedure, we conclude that
    $\varphi(x)\equiv \varphi(x_0)$ on $\G$.
    \medbreak

    It remains to prove the last point concerning the case
    $\inf_{\G} \varphi = -\infty$.
    Since there are finitely many edges, there must exist an half-line
    $e \in \mathcal E$, identified with $\intervalco{0, +\infty}$,
    such that $\inf_e \varphi = -\infty$.  If there exists
    $\bar x \in e$ such that $\varphi(\bar x) < 0$ and
    $\varphi'(\bar x) < 0$, then, arguing as in Step~1, we conclude
    that $\varphi(x) \to -\infty$ as $x \to \infty$, as desired.  If
    not, for every $x \in e$, one has
    $\varphi(x) < 0 \limplies \varphi'(x) \ge 0$.  We claim that
    this implies that, for every $x \in e$ such that
    $\varphi(x) < 0$, $\varphi$ is negative on $\intervalcc{0, x}$.
    Indeed, if there was a $x' \in \intervalco{0, x}$ such that
    $\varphi(x') \ge 0$, then there would exists a
    $x'' \in \intervalco{x', x}$ such that $\phi(x'') = 0$ and
    $\phi < 0$ on $\intervaloc{x'', x}$ and applying the Mean Value
    Theorem to $\intervalcc{x'', x}$ would yield a contradiction.
    As a consequence of the previous claim,
    $\inf_e \varphi = \varphi(0)$ which contradicts
    $\inf_e \varphi = -\infty$.
\end{proof}

A strong maximum principle also holds for every graph in $\GG$.
It will be used in Section~\ref{sec:A_Priori}.
\begin{proposition}[Strong Maximum Principle]
  \label{strong_maximum_principle}
  Let $\G \in \GG$,
  $W \in L^{\infty}_\loc(\G)$ a
  nonnegative function and $\phi \in \mathcal{C}(\G)$ a function such that,
  for every edge $e$, we have $\phi\in W^{2,\infty}_\loc(e)$.
  Assume that
  \begin{enumerate}[(i)]
  \item $-\phi'' + W(x)\phi\geq 0 $ in every edge $e$ of $\G$;
  \item for every vertex $\vv \in \V$, \quad
   $\phi(\vv) = 0 \ \Rightarrow\
   \displaystyle
     \sum_{\edge \incident \vv} \dd{\phi_{\edge}}{x}(\vv)\leq 0$;
  \item $\varphi \ge 0$ on $\G$.
  \end{enumerate}
  Then, either $\phi \equiv 0$ on $\G$
  or for all $x \in \G$, $\varphi(x) > 0$.
\end{proposition}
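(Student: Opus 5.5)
The idea is to show that the zero set $Z \coloneq \{x \in \G \mid \phi(x) = 0\}$ is both open and closed in $\G$. Since $\G$ is connected, this forces either $Z = \emptyset$ (so $\phi > 0$ everywhere) or $Z = \G$ (so $\phi \equiv 0$). Closedness follows from the continuity of $\phi$, so the whole work is to show that $Z$ is open. This is a local statement, so it suffices to argue near each point $x_0 \in Z$. We may therefore work on a single edge, or on a small star neighbourhood of a vertex. Because of this, it does not matter that $\G$ may have infinitely many edges: every vertex has finite degree and the edge lengths are bounded below.

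\textbf{Interior points.} Let $x_0 \in Z$ lie in the interior of an edge $e$. Since $\phi \ge 0$ and $\phi(x_0) = 0$, $x_0$ is an interior minimum, so $\phi'(x_0) = 0$; here we use that $\phi \in \mathcal{C}^1$ on the edge. On a small interval $J \ni x_0$, assumption (i) gives $\phi'' \le W\phi \le M\phi$ a.e., where $M \coloneq \sup_J W$, which is finite because $W \in L^\infty_\loc$. I would then run a Gronwall argument. Set $E(x) \coloneq \phi(x) + |\phi'(x)|$, or argue more simply as follows. Integrating twice from $x_0$ gives
\[
  0 \le \phi(x) \le \int_{x_0}^x \int_{x_0}^t M \phi(s) \intd s \intd t .
\]
Writing $S(r) \coloneq \max_{|x - x_0| \le r} \phi$, this yields $S(r) \le M r^2 S(r)/2$. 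For $r$ small, $M r^2/2 < 1$, hence $S(r) = 0$. So $\phi$ vanishes on a neighbourhood of $x_0$.

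\textbf{Vertex points.} Let $x_0 = \vv$ be a vertex with $\phi(\vv) = 0$. On each incident edge, parametrised from $\vv$, $\phi$ attains its minimum at $\vv$, so $\dd{\phi_{\edge}}{x}(\vv) \ge 0$ for every $\edge \incident \vv$. Assumption (ii) says the sum of these derivatives is $\le 0$, so each one must vanish. Now on each incident edge we have $\phi(0) = \phi'(0) = 0$ and $\phi \ge 0$. The same double-integration estimate, applied on $[0, r]$ with $r$ less than the length of every incident edge, gives $\phi \equiv 0$ on $[0, r]$ of each incident edge. There are finitely many such edges, so $\phi$ vanishes on a neighbourhood of $\vv$. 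This shows that $Z$ is open.

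The main obstacle is the vertex case, specifically the step that turns the Kirchhoff-type inequality into the vanishing of each individual derivative. That step rests only on the sign information coming from minimality at $\vv$ together with assumption (ii). Once it is in place, the local Gronwall estimate applies uniformly on every edge, and connectedness finishes the argument.
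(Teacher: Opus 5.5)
Your proof is correct and follows the same route as the paper's. At a zero of $\phi$ you obtain $\phi'=0$, either from interior minimality or, at a vertex, from the sign of each one-sided derivative combined with (ii); then a one-dimensional strong maximum principle propagates the vanishing, and connectedness of $\G$ concludes.

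The difference is presentational. The paper picks an edge containing a zero and a positive value of $\phi$ and cites the interval strong maximum principle from De Coster--Habets. You instead show that the zero set is open and closed, and prove the interval step yourself via $S(r)\le \tfrac{M r^2}{2}\, S(r)$. Incidentally, your argument shows that only a local upper bound on $W$ is needed, not $W\ge 0$.
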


\begin{proof}
Assume by contradiction that $\phi\not\equiv 0$ and there exists $\bar x\in \G$ such that $\phi(\bar x)=\min \phi= 0$.
  As $\phi \in \mathcal{C}(\G)$ and $\G$ is connected,
  there exists an edge $\bar e$ and two points
  $x_0$ and $x_1$ of $\bar e$ such that
  $\varphi(x_0)=0$ and $\varphi(x_1)>0$.
  If $x_0$ belongs to the interior of $\bar e$ then,
  since $x_0$ is a local minimum, $\varphi'(x_0)=0$.
  If $x_0$ is a vertex of $\G$, then
  (using again that $x_0$ is a local minimum)
  for every $\edge \incident x_0$,
  $\dd{\varphi_{\edge}}{x}(x_0)\ge 0$ and by hypothesis~(ii),
  we get $\dd{\varphi_{\bar e}}{x}(x_0)= 0$.

  \medbreak
  Parameterizing the edge in the reverse direction if necessary,
  we may assume that $x_0 < x_1$.
  Then, the restriction of $\varphi$
  to the interval $\intervalcc{x_0, x_1}$
  satisfies $\varphi(x_0) = 0$, $\varphi'(x_0) = 0$
  and $\varphi(x_1) > 0$.
This contradicts the strong maximum principle on an interval
  (see e.g. \cite[Appendix, Theorem~5.1]{DeHab}).
\end{proof}

We end this section with a technical result which will be
used in the construction of the comparison functions in the proof of
\eqref{eq:exp-bound}.

\begin{lemma}
  \label{lem:exp-bound}
  Let $\ell, \phi_0, \phi_1$ be positive real numbers such that
  $\phi_0 \ge \phi_1$.  Let
  \begin{equation*}
    \alpha
    \coloneq \frac{1}{\ell} \cosh^{-1} \frac{\phi_0}{\phi_1}
    \ge 0.
  \end{equation*}
  There exists $\phi \in \C^\infty(\intervalcc{0,\ell})$ such that
  \begin{enumerate}[(i)]
  \item $\phi$ is non-increasing on $\intervalcc{0,\ell}$;
  \item $\phi(0) = \phi_0$, $\phi(\ell) = \phi_1$ and $\phi'(\ell) = 0$;
  \item $\phi'' = \alpha^2 \phi$;
  \item $0 < \phi(x) \le \phi_0 \e^{-\beta x}$ for all
    $x \in \intervalcc{0,\ell}$, where
    $\beta \coloneq \frac{1}{\ell} \ln(\phi_0/\phi_1) \ge 0$.
  \end{enumerate}
\end{lemma}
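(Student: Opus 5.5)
The plan is to take the explicit candidate
\[
  \phi(x) \coloneq \phi_1 \cosh\bigl(\alpha(\ell - x)\bigr), \qquad x \in \intervalcc{0,\ell},
\]
and verify the four properties directly. Smoothness is clear. Property (iii) holds since $\phi'' = \alpha^2 \phi_1\cosh(\alpha(\ell-x))$. For (ii), we have $\phi(\ell) = \phi_1$ and $\phi'(\ell) = -\alpha\phi_1\sinh(0) = 0$. Also $\phi(0) = \phi_1\cosh(\alpha\ell) = \phi_1 \cdot \phi_0/\phi_1 = \phi_0$ by the definition of $\alpha$; this uses $\phi_0/\phi_1 \ge 1$, so that $\cosh^{-1}$ is defined and $\alpha \ge 0$. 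Property (i) follows from $\phi'(x) = -\alpha\phi_1\sinh(\alpha(\ell-x)) \le 0$ for $x \le \ell$, and positivity in (iv) is immediate from $\cosh > 0$.

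The only nontrivial point is the upper bound $\phi(x) \le \phi_0 \e^{-\beta x}$. Note that $\e^{-\beta x} = (\phi_1/\phi_0)^{x/\ell}$, so we compare two functions that agree at both endpoints:
\[
  \phi(0) = \phi_0 = \phi_0\e^{0}, \qquad \phi(\ell) = \phi_1 = \phi_0\e^{-\beta\ell}.
\]
Passing to logarithms, we set $g(x) \coloneq \ln\phi(x) = \ln\phi_1 + \ln\cosh(\alpha(\ell-x))$ and $h(x) \coloneq \ln\phi_0 - \beta x$. Since $h$ is affine and $g(0)=h(0)$, $g(\ell)=h(\ell)$, it suffices to show that $g$ is convex on $\intervalcc{0,\ell}$; a convex function lies below the chord joining its endpoint values. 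Convexity holds because $t\mapsto\ln\cosh t$ has second derivative $\operatorname{sech}^2 t > 0$, and composing with the affine map $x\mapsto \alpha(\ell-x)$ preserves convexity. Hence $g \le h$, which gives (iv).

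The degenerate case $\phi_0 = \phi_1$ gives $\alpha = \beta = 0$ and $\phi \equiv \phi_0$, and all claims are trivial there. I expect no real obstacle: the one idea is to recognize (iv) as log-convexity versus a chord. A direct inequality between $\cosh$ and exponentials would be messier.
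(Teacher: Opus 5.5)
Your proof is correct. It uses the same comparison function $\phi(x)=\phi_1\cosh(\alpha(\ell-x))$ as the paper and verifies (i)--(iii) the same way; the two arguments differ only in how the upper bound in (iv) is obtained.

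The paper substitutes $t=\ell-x$, expands $\cosh$, and reduces (iv) to $f(t)\coloneq \e^{2\alpha t}-2\e^{(\alpha+\beta)t}+1\le 0$ on $[0,\ell]$. It then uses $f(0)=f(\ell)=0$, $f'(0)=-2\beta<0$, and the fact that $f$ has at most one critical point. That route needs the degenerate case $\phi_0=\phi_1$ handled separately, because there $f'(0)=0$.

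Your argument proves the same inequality $\cosh(\alpha t)\le \e^{\beta t}$ by convexity. Since $(\ln\cosh)''=1/\cosh^2>0$, $\ln\phi$ is convex, and $\ln\bigl(\phi_0\e^{-\beta x}\bigr)$ is exactly its chord through the matching endpoint values. This needs no critical-point count and covers $\alpha=\beta=0$ uniformly, so your separate treatment of that case is harmless but not needed. It also shows why the bound holds: a positive log-convex function lies below the geometric interpolation of its endpoint values.
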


\begin{proof}
  It is easy to check that the function
  $\phi(x) \coloneq \phi_1 \cosh(\alpha(\ell - x))$ possesses the desired
  first three properties.   The last one is trivial if $\phi_0 =
  \phi_1$.  So we assume from now on that $\phi_0 > \phi_1$,
  thus $\alpha > 0$ and $\beta > 0$.
  Observe that the definition of
  $\beta$ is equivalent to $\e^{\beta \ell} = \phi_0/\phi_1$ and so
  equality holds at both ends of the interval:
  $\phi(0) = \phi_0 \e^{-\beta 0}$ and
  $\phi(\ell) = \phi_1 = \phi_0 \e^{-\beta \ell}$.  Using again
  $\e^{\beta \ell} = \phi_0/\phi_1$, expanding $\cosh$ and
  substituting $\ell-x$ with $t$, the desired inequality is equivalent
  to
  \begin{equation*}
    \forall t \in \intervalcc{0,\ell},\quad
    f(t) \coloneq
    \e^{2 \alpha t} - 2 \e^{(\alpha + \beta) t} + 1
    \le 0 .
  \end{equation*}
  The equality on the boundary of the interval translates to
  $f(0) = 0 = f(\ell)$.  Moreover, simple computations establish that
  $f$ has at most one critical point in $\intervalcc{0,\ell}$
  and $f'(0) = -2\beta < 0$.  These imply the claim.
\end{proof}

\section{Blow-up analysis and proof of Theorem \ref{main}}
\label{sec: blow-up}

\begin{lemma}
  \label{lower-bound-max|u|}
  Let $\G \in \GG$, $p>2$ and $W$, $\rho\in L^{\infty}(\G)$. Assume
  there exists $\alpha>0$, $\beta>0$ such that $W\geq \alpha$ and
  $\rho\leq \beta$ a.e.\ on $\G$.
  Let $u\in H^1_\loc(\G) \backslash \{0\}$
  be a solution of \eqref{eq:edo2}.
  Let $\bar x \in \G$ be a local
  maximum point of $\abs{u}$.
  \begin{enumerate}
  \item If $\abs{u(\bar x)}>0$ then
    \begin{equation*}
      \abs{u(\bar x)} \ge \bigl(\alpha/\beta\bigr)^{1/(p-2)}.
    \end{equation*}

  \item If $\abs{u(\bar x)}=0$ then $u$
    vanishes on all the edges containing $\bar x$.
  \end{enumerate}
\end{lemma}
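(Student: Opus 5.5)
Since $u$ is a local maximum point of $\abs{u}$, and $-u$ also solves \eqref{eq:edo2}, we may assume $u(\bar x)\ge 0$. The idea is a one-dimensional second-derivative test: if $\abs{u(\bar x)}$ were smaller than $(\alpha/\beta)^{1/(p-2)}$, then near $\bar x$ the equation forces $u$ to be strictly convex where it is positive. That is incompatible with a maximum.

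For point 1, suppose $0<u(\bar x)<(\alpha/\beta)^{1/(p-2)}$. By continuity there is a neighbourhood $N$ of $\bar x$ (a small star around $\bar x$ if it is a vertex, a small interval otherwise) on which $0<u<(\alpha/\beta)^{1/(p-2)}$ and $u\le u(\bar x)$. On each edge piece of $N$ we then have a.e.
\[
u''=W u-\rho u^{p-1}\ge u\bigl(\alpha-\beta u^{p-2}\bigr)>0 .
\]
So $u$ is strictly convex on each edge piece of $N$.

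We split according to the position of $\bar x$.
\begin{itemize}
\item If $\bar x$ is an interior point of an edge, then $u'(\bar x)=0$. Strict convexity then gives $u(x)>u(\bar x)$ for $x\ne\bar x$ near $\bar x$, contradicting maximality.
\item If $\bar x$ is a vertex, maximality gives $\dd{u_\edge}{x}(\bar x)\le 0$ for every incident edge. The Kirchhoff condition (valid for weak solutions by the usual integration by parts) then forces all these outgoing derivatives to vanish. On any incident edge, convexity with zero initial slope again yields $u>u(\bar x)$ just off the vertex, a contradiction.
\end{itemize}
A degree-one vertex is handled by the same argument, since Kirchhoff then reads $u'=0$ there.

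For point 2, suppose $u(\bar x)=0$. Since $\bar x$ is a local maximum of $\abs{u}$, we get $u\equiv 0$ on a neighbourhood of $\bar x$ in $\G$. Hence $u$ vanishes on a subinterval of each edge containing $\bar x$. On such an edge, $u$ solves the ODE $u''=Wu-\rho\abs{u}^{p-2}u$ with bounded coefficients. The nonlinearity is locally Lipschitz because $p>2$, so Cauchy uniqueness for Carathéodory ODEs applies. The zero Cauchy data at an interior point of the zero set therefore propagate, giving $u\equiv 0$ on the whole edge.

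The only delicate points are technical rather than conceptual. First, $u\in W^{2,\infty}_\loc$ on edges (because $u''\in L^\infty_\loc$), so the convexity argument works with a.e. inequalities after integrating $u''$ twice. Second, the Kirchhoff condition must be recovered from the weak formulation \eqref{eq:weak-formulation}. I expect the vertex case of point 1 to be the main step to write carefully.
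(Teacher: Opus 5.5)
Your proposal is correct and follows essentially the same route as the paper. In point~1 you use the sign of $u''$ near $\bar x$ and, at a vertex, the Kirchhoff condition, which forces all outgoing derivatives to vanish and so reduces the vertex case to the interior one. In point~2 you use Cauchy uniqueness edge by edge. The only cosmetic difference is that the paper integrates $u''$ from $\bar x$ up to a nearby point $x_2$ where $\operatorname{sign}(u(\bar x))\,u'(x_2)\le 0$, whereas you conclude directly that $u(x)>u(\bar x)$ near $\bar x$; the two are equivalent.
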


\begin{proof} We distinguish two cases.
\medbreak

\noindent\textit{Case 1: $u(\bar x)=0$.}
\medbreak

In that case,  $u\equiv 0$ in a neighbourhood of $\bar x$ as $\bar x$ is a local maximum point of
$|u|$. By uniqueness of the solution of the Cauchy problem
of the NLS ODE, $u\equiv0$ on the edges containing
$\bar x$.
\medbreak

\noindent\textit{Case 2: $u(\bar x)\ne 0$.}
\medskip

Assume by contradiction that
$\abs{u(\bar x)} <(\alpha/\beta)^{1/(p-2)}$.
Let $\edge$ be an edge of $\G$,
identified with $\intervalcc{0, \ell_\edge}$
or $\intervalco{0, \infty}$,
such that $\bar x \in \edge$.
It is standard to show that
$u|_\edge \in W^{2,\infty}_\loc(\edge)$.

If $\bar x$ is in the interior of $\edge$, then,
there exists $x_2>\bar x$
such that $u\in \C^1([\bar x,x_2])$ and
\begin{equation}
  \label{maxloc}
  u'(\bar x)= 0\geq \sign(u(\bar x))u'(x_2).
\end{equation}
On the other hand,
taking $x_2$ closer to $\bar x$ is necessary,
the contradiction assumption implies
\begin{equation*}
  \text{for a.e. }x\in (\bar x,x_2), \qquad
  u(x)\ne 0 \quad \text{and}\quad
  \sign(u(\bar x)) u''(x)> 0.
\end{equation*}
Integrating this last inequality on $[\bar x,x_2]$
gives a contradiction with \eqref{maxloc}.
\medbreak

On the other side, if $\bar x$ lies
on the boundary of $\edge$, the derivatives
$\dd*{u_{\tilde e}}{x}(\bar x)$ must
all have the same sign for
$\tilde e \incident \bar x$ as $\bar x$
is a maximum point of $\abs{u}$ and the Kirchhoff condition
then implies
that $\dd*{u_{\edge}}{x}(\bar x)=0$.
Then we conclude by the same argument as in the first case.
\end{proof}

\begin{remark}
\label{Rem rho>0}
    Observe also that, if $\rho$ is such that there exists $a>0$ with, for every $e\in \mathcal{E}$, either,
    $\rho(x)\geq a$
    on $e$ or
    $\rho\equiv 0$ on $e$, then, with the notations of the proof of the previous lemma,
    one has $\rho\geq a>0$
    on every edge containing $\bar x$.
   Indeed, if $\rho\equiv 0$ on one of these edges, we also have
  \begin{equation*}
    \text{for a.e. } x\in \,(\bar x,x_2), \qquad
    u(x)\ne 0 \quad \text{and}\quad
    \sign(u(\bar x)) u''(x)> 0,
  \end{equation*}
  whatever the value of $|u(\bar x)|$ is.
\end{remark}

\begin{remark}
  As a consequence of Lemma \ref{lower-bound-max|u|},
  under assumption \ref{H},
  sequences of solutions
  $(u_n) \subseteq H^1(\G)\setminus\{0\}$
  to~\eqref{eq:edo} with $\lambda_n \to +\infty$
  (as in the assumptions of Theorem~\ref{main}),
  must necessarily blow-up in $L^\infty$-norm.
\end{remark}

Our next lemmas describe the behavior close to some points
of local maxima.
The situation is very different depending on whether
\[
\varlimsup_{n \to \infty}
\abs{u_n(x_n)}^{(p-2)/2} \dist(x_n, \V) = +\infty
\quad \text{or}\quad
\varlimsup_{n \to \infty}
\abs{u_n(x_n)}^{(p-2)/2}\dist(x_n, \V) < +\infty.
\]
In particular, in the first case, up to rescaling, the sequence
converges to the solution of a problem on $\mathbb R$ while in the
second case, the limit problem is set on a star graph. The next two
lemmas describe these situations.

In what follows, we denote
$B(x_0, r) \coloneq \{ x \in \G \mid \dist(x, x_0) < r\}$.

\begin{lemma}
  \label{concentration at a local maximum}
  Assume that $\G \in \GGfin$, $p>2$, and \ref{H} holds and let
  $(u_n) \subseteq H^1(\G) \backslash \{0 \}$ be a sequence of
  solutions to~\eqref{eq:edo}.  Suppose that
  \begin{equation*}
    \lambda_n \to +\infty \quad \text{ and } \quad
    \varlimsup_{n \to \infty}\morse(u_n) < \infty
  \end{equation*}
  where $\morse(u_n)$ denotes the Morse index of $u_n$ (see
  Definition~\ref{def:Morse index}).

  Let $(x_n) \subseteq \G$ be a sequence of local maxima of $|u_n|$
  such that, for some sequence $(\tilde R_n)$, with
  \begin{equation}
    \label{tilde R_n}
    \tilde R_n|u_n(x_n)|^{\frac{p-2}{2}}\to\infty,
  \end{equation}
  we have
  \begin{equation}
    \label{eq:loc-max-quantified1}
    \abs{u_n(x_n)} = \max_{B(x_n, \tilde R_n)} \abs{u_n}
    \qquad\text{and}\qquad
    u_n(x_n) \ne 0.
  \end{equation}
  Assume moreover that
  \begin{equation}
    \label{eq:dist->infty}
    \varlimsup_{n \to \infty}
    \abs{u_n(x_n)}^{\frac{p-2}{2}} \dist(x_n, \V) = +\infty.
  \end{equation}
  Then, passing if necessary to subsequences, one has
  \begin{enumerate}[(i)]
  \item\label{xn-in-e1}
    All the $x_n$ lie in the interior of the same edge $e_1$
    (identified with $\intervalcc{0,\ell_{e_1}}$ or with
    $\intervalco{0, +\infty}$) on which one has $\rho_n \ge a$.
  \item The scaled sequence
    \begin{equation}
      \label{scale-epsilon-n}
      \tilde u_n(y) \coloneq \frac{1}{|u_n(x_n)|}
      \, u_n(x_n + \tilde\epsilon_n\, y),
      \qquad
      \text{defined for }
      y \in \tilde e_n\coloneq \frac{e_1 - x_n}{\tilde \epsilon_n},
    \end{equation}
    where we
    denote $\tilde\epsilon_n \coloneq |u_n(x_n)|^{-\frac{p-2}{2}}$,
    converges in $\C^1_\loc(\IR)$ to a function
    $\tilde u \in H^1(\IR)$ which is a nontrivial solution to
    \begin{equation}
      \label{eq:edo-V}
      \begin{cases}
        -\tilde u'' + \tilde\lambda \tilde u
        = \tilde\rho(y) \, \abs{\tilde u}^{p-2} \tilde u, &\text{in } \IR,\\
        1 = |\tilde u(0)|=\max_{\mathbb R}|\tilde u|,
      \end{cases}
    \end{equation}
    where $\tilde\lambda \in (0, b]$ is such that
    \begin{equation}
      \label{eq:equiv-epsilontilde-epsilon}
      \frac{\lambda_n}{\abs{u_n(x_n)}^{p-2}}
      \xrightarrow[n \to \infty]{}
      \tilde\lambda,
    \end{equation}
    and $\tilde\rho(\cdot)$ is the $\sigma(L^{\infty}, L^1)$ weak*-limit of
    $\rho_n(x_n+\tilde\epsilon_n \cdot)$.
  \end{enumerate}
  Moreover
  \begin{enumerate}[(a)]
  \item\label{limit-has-bounded-Morse-index}
    $\displaystyle 1 \le \morse(\tilde u) \le \varlimsup_{n\to\infty}
    \morse(u_n)$.
  \item\label{localisation-fn-m>=1} there exists $R > 0$ and a sequence
    $(\phi_n) \subseteq H^1(\G) \cap \Cc(\G)$ with
    $\supp \phi_n \subseteq B(x_n, R \tilde\epsilon_n)$ such that, for
    $n$ large,
    \begin{equation}
      \label{eq:Morse-index-1concentration}
      Q_{u_n}(\phi_n; \G) \coloneq
      \int_\G \big(\abs{\phi'_n}^2 + W_n(x) \phi_n^2
      + \lambda_n \phi_n^2
      - (p-1) \rho_n(x) \abs{u_n}^{p-2} \phi_n^2\big) \intd x
      < 0.
    \end{equation}
  \item\label{Lq-lim-of-mass} for all $R > 0$ and $q \ge 1$, one has
    \begin{equation}
      \label{eq:mass-concentration-Lq}
      \lim_{n\to\infty} \lambda_n^{\frac{1}{2} - \frac{q}{p-2}}
      \int_{B(x_n, R\tilde\epsilon_n)} \abs{u_n}^q \intd x
      = \tilde\lambda^{\frac12-\frac{q}{p-2}}
      \int_{B(0, R)} \abs{\tilde u}^q \intd y.
    \end{equation}
  \end{enumerate}
\end{lemma}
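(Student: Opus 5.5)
We rescale around $x_n$ and pass to the limit. The argument has five steps: locate the points and the edge, control $\tilde\lambda$, obtain compactness, identify the limit problem, and transfer Morse index and mass information.

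\emph{Location and edge.} Since $\G$ has finitely many edges, we pass to a subsequence along which all $x_n$ lie on one edge $e_1$ and \eqref{eq:dist->infty} holds as a true limit, $\tilde\epsilon_n^{-1}\dist(x_n,\V)\to\infty$. In particular the $x_n$ are interior points. By Lemma \ref{lower-bound-max|u|} and Remark \ref{Rem rho>0}, at a nonzero local maximum of $|u_n|$ the edge must carry $\rho_n\ge a$; by \ref{H} this then holds for all $n$. This gives (i).

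\emph{Bounds on $\tilde\lambda$ and compactness.} On $\tilde e_n$, which exhausts $\IR$, the rescaled function satisfies
\[
-\tilde u_n''+\tilde\epsilon_n^2(W_n+\lambda_n)\tilde u_n=\rho_n(x_n+\tilde\epsilon_n y)|\tilde u_n|^{p-2}\tilde u_n,
\]
with $|\tilde u_n(0)|=1$. By \eqref{tilde R_n} we also have $|\tilde u_n|\le1$ on $B(0,\tilde R_n/\tilde\epsilon_n)$, which grows to $\IR$.

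The proof of Lemma \ref{lower-bound-max|u|}, applied with $\alpha=\lambda_n-\bar W$ and $\beta=b$, gives $|u_n(x_n)|^{p-2}\ge(\lambda_n-\bar W)/b$. Hence $\tilde\epsilon_n^2\lambda_n$ is bounded by roughly $b$; after extraction it converges to some $\tilde\lambda\in[0,b]$. Since $\tilde\epsilon_n^2W_n\to0$ by the $L^\infty$ bound, the equation gives local $W^{2,\infty}$ bounds on $\tilde u_n$. Arzel\`a–Ascoli then yields convergence in $\C^1_\loc(\IR)$. The weights $\rho_n(x_n+\tilde\epsilon_n\cdot)$ are bounded in $L^\infty$, so they converge weak* to some $\tilde\rho$ with $a\le\tilde\rho\le b$.

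\emph{Limit problem.} Passing to the limit in the weak formulation shows that $\tilde u$ solves \eqref{eq:edo-V}, with $|\tilde u(0)|=1=\max|\tilde u|$. The nonlinear term converges because $\tilde u_n\to\tilde u$ uniformly on compacts while $\tilde\rho_n$ converges only weak*, so the product converges weakly.

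For the Morse index bound in (a), suppose $Q_{\tilde u}<0$ on a $k$-dimensional space spanned by compactly supported functions. Transplanting these functions via $\phi(\cdot)\mapsto\phi((\cdot-x_n)/\tilde\epsilon_n)$ gives $Q_{u_n}=\tilde\epsilon_n^{-1}(Q_{\tilde u}+o(1))$, uniformly on the unit sphere of that finite-dimensional space, so they have negative energy for large $n$. Hence $\morse(\tilde u)\le\varlimsup\morse(u_n)$. The same transplantation of a single negative direction gives (b): one compactly supported $\phi$ with $Q_{\tilde u}(\phi)<0$ produces the sequence $(\phi_n)$ with support in $B(x_n,R\tilde\epsilon_n)$.

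\emph{Main obstacle: $\tilde\lambda>0$, $\tilde u\in H^1$, and $\morse(\tilde u)\ge1$.} If $\tilde\lambda=0$, then $\tilde u$ is a bounded, finite Morse index solution on $\G_2=\IR$ with $W\equiv0$ and $\tilde\rho\ge a$. Corollary \ref{Liouville-lambda=0} forces $\tilde u\equiv0$, contradicting $|\tilde u(0)|=1$; hence $\tilde\lambda\in(0,b]$. With $W=\tilde\lambda>0$, Lemma \ref{Stable via Morse} together with Remark \ref{rem1} gives $\tilde u\in H^1(\IR)$. Lemma \ref{Morse index>=1} then gives $\morse(\tilde u)\ge1$.

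\emph{Mass identity (c).} Change variables $x=x_n+\tilde\epsilon_n y$:
\[
\int_{B(x_n,R\tilde\epsilon_n)}|u_n|^q=|u_n(x_n)|^q\,\tilde\epsilon_n\int_{B(0,R)}|\tilde u_n|^q .
\]
By uniform convergence on $B(0,R)$ the last integral tends to $\int_{B(0,R)}|\tilde u|^q$. For the prefactor, write
\[
|u_n(x_n)|^q\tilde\epsilon_n=|u_n(x_n)|^{q-\frac{p-2}{2}}=\bigl(\lambda_n/\tilde\lambda_n\bigr)^{\frac{q}{p-2}-\frac12},\qquad \tilde\lambda_n\coloneq\lambda_n|u_n(x_n)|^{2-p}.
\]
Multiplying by $\lambda_n^{\frac12-\frac{q}{p-2}}$ leaves $\tilde\lambda_n^{\frac12-\frac q{p-2}}\to\tilde\lambda^{\frac12-\frac q{p-2}}$, which gives \eqref{eq:mass-concentration-Lq}.
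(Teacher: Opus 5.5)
Your proposal is correct and follows essentially the same route as the paper's proof. You place the $x_n$ on one edge with $\rho_n\ge a$ via Lemma~\ref{lower-bound-max|u|} and Remark~\ref{Rem rho>0}, then rescale and use Arzel\`a--Ascoli together with weak* compactness of the weights. You transplant negative directions to bound $\morse(\tilde u)$ and obtain (b), and use Corollary~\ref{Liouville-lambda=0} to exclude $\tilde\lambda=0$. Lemma~\ref{Stable via Morse}, Remark~\ref{rem1} and Lemma~\ref{Morse index>=1} then give $\tilde u\in H^1(\IR)$ and $\morse(\tilde u)\ge 1$, and (c) follows by a change of variables. Your unnormalized transplantation, which only adds the harmless factor $\tilde\epsilon_n^{-1}$, and your explicit uniformity on the unit sphere of the finite-dimensional negative space are, if anything, slightly more careful than the paper's write-up.
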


\begin{remark}
  \label{Rem 3.5}
  Observe that, if $\K\subseteq \G$ and $\rho_n=\alpha_n \chi_{\K}$ with
  $\alpha_n\to\alpha$ and $\alpha>0$, then, by~\ref{xn-in-e1}
  and Remark~\ref{Rem
    rho>0}, all the $x_n$ belong to one edge of $\K$ and hence
  $\rho_n(x_n+\tilde\epsilon_n y)=\alpha_n$, for all
  $y \in\tilde e_n$. As
  $\rho_n(x_n+\tilde\epsilon_n \cdot)\to \tilde\rho(\cdot)$ in the
  weak*-topology $\sigma(L^{\infty}, L^1)$ means that, for every
  $u\in L^1(-r,r)$
  \begin{equation*}
    \int_{-r}^r \rho_n(x_n+\tilde\epsilon_n y)\, u(y)\intd y
    \to \int_{-r}^r \tilde\rho(y)\, u(y)\intd y \text{,}
  \end{equation*}
  we deduce that $\tilde\rho(y)=\alpha$.
\end{remark}

\begin{proof}
  First extract subsequences such that \eqref{eq:dist->infty} becomes
  valid with a ``$\lim$'' instead of a ``$\varlimsup$'' and, for all
  $n$, $\dist(x_n, \V) \ne 0$ i.e., $x_n \notin \V$.  Since $\G$ is
  made up of finitely many edges, further restricting the subsequence
  if necessary, one can assume that all $x_n$ belong to the interior
  of the same edge $e_1$.

  First of all, notice that by Lemma~\ref{lower-bound-max|u|}, using
  assumption \ref{H}~(i) and (ii), we have
  $|u_n(x_n)|^{p-2}\ge (\lambda_n-\bar W)/b$. Hence, using also Remark
  \ref{Rem rho>0}, we get in particular
  \[
    |u_n(x_n)|\to\infty, \qquad
    \rho_n\ge a \text{\ \ on\ \ } e_1
  \]
  and
  \[
    0 < \frac{\lambda_n}{\abs{u_n(x_n)}^{p-2}}
    \le b+\frac{\bar W}{\abs{u_n(x_n)}^{p-2}}.
  \]
  Thus, passing if necessary to subsequences, we can suppose the
  existence of the limits
  \begin{equation*}
    \lim_{n \to \infty}
    \frac{\lambda_n}{\abs{u_n(x_n)}^{p-2}},
    \hspace*{4em}
    \lim_{n \to \infty} \morse(u_n)
  \end{equation*}
  and we have
  \begin{equation*}
    \tilde\lambda
    \coloneq \lim_{n \to \infty}
    \frac{\lambda_n}{\abs{u_n(x_n)}^{p-2}}
    \in \intervalcc{0,b}.
  \end{equation*}

  \medbreak

  \noindent\textit{Step 1: Existence of $\tilde u$.}
  \medskip

  For all $n$, define $\tilde u_n$ by~\eqref{scale-epsilon-n}.
  By assumptions~\eqref{tilde R_n} and~\eqref{eq:loc-max-quantified1},
  denoting $R_n \coloneq \tilde R_n/\tilde\epsilon_n$, we have
  $R_n\to\infty$,
  \begin{equation*}
    1 = \abs{\tilde u_n(0)}
    = \max_{B(0, R_n)} \abs{\tilde u_n} ,
  \end{equation*}
  and  $\tilde u_n$ satisfies the equation
  \begin{equation*}
    -\tilde u_n'' + \frac{W_n(x_n + \tilde\epsilon_ny)
    + \lambda_n }{|u_n(x_n)|^{p-2}}  \, \tilde u_n
    = \rho_n(x_n + \tilde\epsilon_n y) \abs{\tilde u_n}^{p-2} \tilde u_n
    \quad
    \text{in } \tilde e_n.
  \end{equation*}
  Moreover, due to assumption \eqref{eq:dist->infty} and the construction,
  \begin{equation}
     \label{eq:0-away-boundary}
    \dist(0, \partial \tilde e_n)
    = \dist(x_n,\V)/\tilde\epsilon_n
    = \abs{u_n(x_n)}^{\frac{p-2}{2}} \dist(x_n, \V)
    \to +\infty,
  \end{equation}
  so that every interval $\intervalcc{-r,r}$ is included in
  $\tilde e_n$ for $n$ large enough.

  Observe that, for a.e. $y\in \tilde e_n$,
  \[
    \frac{\lambda_n-\bar W}{\abs{u_n(x_n)}^{p-2}}
    \le \frac{W_n(x_n+\tilde \epsilon_n y) + \lambda_n}{\abs{u_n(x_n)}^{p-2}}
    \le \frac{\lambda_n+\bar W}{\abs{u_n(x_n)}^{p-2}},
  \]
  therefore
  \[
    \frac{W_n(x_n+\tilde \epsilon_n y) + \lambda_n}{\abs{u_n(x_n)}^{p-2}}
    \to \tilde\lambda
    \quad\text{as }  n\to\infty \qquad \text{uniformly on } \tilde e_n.
  \]
  On the other hand, given assumption~\ref{H}~\ref{H:bounds-on-rhon},
  $(\rho_n)$ is bounded in $L^\infty$ and Alaoglu
  theorem implies that, passing if necessary to a subsequence,
  \begin{equation*}
    \rho_n(x_n+\tilde\epsilon_n \cdot) \to \tilde\rho(\cdot)
    \quad \text{in the weak*-topology } \sigma(L^{\infty}, L^1),
  \end{equation*}
  where $\rho_n(x_n+\tilde\epsilon_n \cdot)$ is extended to
  $\IR$ by~$0$.

  Therefore, there exists $C>0$ such that, for all $n\in \mathbb N$
  and all $y\in \tilde e_n$, $|\tilde u_n(y)| \le 1=|\tilde u_n(0)|$ and
  $|\tilde u_n''(y)| \le C$.   By the Ascoli-Arzela theorem, we
  can find $(\tilde u_n^1)$, a subsequence of $(\tilde u_n)$ that
  converges in $\C^1([-1,1])$. Proceeding by induction for any
  $k\in \IN$, we build $(\tilde u_n^k)$ which is a subsequence of
  $(\tilde u_n^{k-1})$ and converges in $\C^1([-k,k])$.  It follows
  that the diagonal sequence
  $(\tilde u_n^{n})$ converges pointwise to a
  function $\tilde u: \IR\to\IR$ and such that, for any
  $k\in \mathbb N_0$, the convergence takes place in $\C^1([-k,k])$.
  Hence, $\tilde u$ is a solution to
  \begin{equation*}
    -\tilde u'' + \tilde\lambda \tilde u
    = \tilde\rho(y) \abs{\tilde u}^{p-2} \tilde u
    \quad
    \text{in } \IR
    \qquad\text{and}\qquad
    1 = \abs{\tilde u(0)} = \max_{\IR} \abs{\tilde u}.
  \end{equation*}

  \noindent\textit{Step 2 :
    $\morse(\tilde u) \le \varlimsup \morse(u_n) < \infty$.}
  \medbreak

  By contradiction, if
  $m \coloneq \morse(\tilde u) > \lim \morse(u_n)$, there would exist
  linearly independent functions
  $\tilde\phi_1, \dotsc,\linebreak[2] \tilde\phi_m \in H^1(\IR) \cap
  \Cc(\IR)$ such that, for every
  $\varphi\in \mbox{span}\{\tilde\phi_1, \dotsc, \tilde\phi_m\}$,
  \begin{equation*}
    Q_{\tilde u}(\phi; \IR)
    \coloneq \int_\IR \Big(\abs{\phi'}^2 + \tilde\lambda\phi^2
    - (p-1) \tilde\rho(y) |\tilde u(y)|^{p-2}|\phi|^2\Big) \intd y
    < 0.
  \end{equation*}
  For
  $\tilde\varphi\in \mbox{span}\{\tilde\phi_1, \dotsc,
  \tilde\phi_m\}$, define
  \begin{equation*}
    \phi_{n}(x)
    \coloneq \tilde \epsilon_n^{1/2} \,
    \tilde\varphi\left(\frac{x-x_n}{\tilde \epsilon_n}\right).
  \end{equation*}
  Recalling that~\eqref{eq:0-away-boundary} implies that, for any $r > 0$,
  $\intervalcc{-r,r} \subseteq \tilde e_n$ and
  that each $\tilde\phi_i$ has compact support, the functions
  $\phi_{n}$ can be regarded as functions in $H^1(e_1)$ and thus
  also in $H^1(\G)$ by extending them by $0$
  on the rest of the graph.  Now observe that, by scaling,
  \begin{align*}
    Q_{u_n}(\phi_{n}; \G)
    &= Q_{u_n}(\phi_{n}; e_1)\\
    &= \int_{e_1} \Big(\abs{\phi_n'(x)}^2
      + (W_n(x)+ \lambda_n)|\phi_n(x)|^2
      - (p-1)\rho_n(x) |u_n(x)|^{p-2}|\phi_n(x)|^2\Big) \intd x\\
    &= \int_{\tilde e_n} \Big(\abs{\tilde\varphi'(y)}^2
      + \tilde\epsilon_n^2 \bigl(W_n(x_n+\tilde\epsilon_ny)+ \lambda_n\bigr)
      |\tilde\varphi(y)|^2\\
    &\hspace*{8em}
      - (p-1)\rho_n(x_n+\tilde\epsilon_n y)
      |\tilde u_n(y)|^{p-2}|\tilde\varphi(y)|^2\Big) \intd y.
  \end{align*}
  Recall that $\varphi$ has compact support and hence
  \begin{equation*}
    \tilde\epsilon_n^2
    \bigl(W_n(x_n+\tilde\epsilon_n \cdot)+ \lambda_n\bigr)
    = \frac{W_n(x_n+\tilde\epsilon_n \cdot)
      + \lambda_n}{|u_n(x_n)|^{p-2}} \to \tilde\lambda,
    \qquad \tilde u_n \to \tilde u
    \quad\text{uniformly on } \supp(\varphi).
  \end{equation*}
  As $\rho_n(x_n+\tilde\epsilon_n \cdot)\to \tilde\rho(\cdot)$ in the
  $\sigma(L^{\infty}, L^1)$ weak*-topology, we have
  \[
    Q_{u_n}(\phi_{n};  \G)
    \rightarrow  Q_{\tilde u}(\phi;  \IR)<0.
  \]
  Considering the rescaling of
  $\tilde\phi_1, \dotsc, \tilde\phi_m \in H^1(\IR) \cap \Cc(\IR)$, we
  obtain a family of $m$ linearly independent functions
  $\phi_{1,n},\dots,\phi_{m,n}$ in $H^1(\G)$ for which we then have
  $Q_{u_n}(\phi_{n}; \G) <0$ for all
  $\phi_n \in \spanned\{\phi_{1,n},\dots,\phi_{m,n}\}$ and all $n$
  large enough.  This implies that $\morse(u_n) \ge m$, a
  contradiction and we have proved Step 2.  \medbreak

  \noindent\textit{Step 3: $\tilde\lambda>0$.}
  \medskip

  As $|\tilde u|\leq 1$, we have
  $\tilde u \in H^1_\loc(\mathbb R)\cap L^{\infty}(\IR)$ and, by
  Corollary~\ref{Liouville-lambda=0} (with $k = 2$), as $\tilde\rho \ge a>0$
  and $\max |\tilde u|= 1$, we have $\tilde\lambda>0$.  \medbreak

  \noindent\textit{Step 4: $\tilde u \in H^1(\mathbb R)$
    and  $\morse(\tilde u) \ge 1$.}
  \medskip

  By Lemma \ref{Stable via Morse}, $\tilde{u}$ is stable outside a
  compact set and hence, by Remark \ref {rem1},
  $\tilde u \in H^1(\mathbb R)$ and the information on
  $\morse(\tilde u)$ can be obtained by Lemma~\ref{Morse index>=1}.
  \medbreak

  \noindent\textit{Step 5: Proof of~\ref{localisation-fn-m>=1}.}
  \medskip

  By Step 4, there exist $R > 0$ and a function
  $\Phi \in H^1(\IR) \cap \Cc(\IR)$ such that
  $\supp \Phi \subseteq (-R, R)$ and
  \begin{equation*}
    Q_{\tilde u}(\Phi;  \mathbb R)
    = \int_\IR \abs{\Phi'}^2 + \tilde\lambda |\Phi|^2
    - (p-1)\tilde\rho(y)  \abs{\tilde u(y)}^{p-2} |\Phi|^2 \intd y
    < 0.
  \end{equation*}
  Let
  $\phi_n(x) \coloneq \tilde\epsilon_n^{1/2} \Phi\bigl( (x - x_n)
  /\tilde\epsilon_n \bigr)$.  Clearly
  $\supp \phi_n \subseteq B(x_n, R\tilde\epsilon_n)$ so for $n$ large
  enough, $\phi_n$ can be seen as a function with compact support on
  $e_1$ and extended by $0$ to a function in $H^1(\G) \cap \Cc(\G)$.
  As in Step 2, we prove that
  \begin{equation*}
    Q_{u_n}(\phi_n;  \G) =
    \int_\G \Big(\abs{\phi_n'}^2
    + (W_n(x) + \lambda_n) |\phi_n|^2
    - (p-1) \rho_n(x) \abs{u_n}^{p-2} |\phi_n|^2\Big) \intd x
    \xrightarrow[n \to \infty]{} Q_{\tilde u}(\Phi;  \mathbb R) <0,
  \end{equation*}
  whence $ Q_{u_n}(\phi_n; \G) < 0$ for $n$ large enough, proving
  \eqref{eq:Morse-index-1concentration}.

  \medbreak

  \noindent\textit{Step 6: Proof of~\ref{Lq-lim-of-mass}.}
  \medskip

  Let $R > 0$ and $q \ge 1$.  Note again that, for $n$ large enough,
  $B(x_n, R\tilde\epsilon_n) \subseteq e_1$ so, defining $\tilde u_n$
  by \eqref{scale-epsilon-n}, we have
  \[
    \lambda_n^{\frac12-\frac{q}{p-2}}\int_{B(x_n,R\tilde\epsilon_n)}|u_n|^q \intd x
    =
    \left(\frac{\lambda_n}{|u_n(x_n)|^{p-2}}
    \right)^{\frac12-\frac{q}{p-2}} \int_{B(0,R)}|\tilde u_n|^q \intd y
    \to
    \tilde \lambda^{\frac12-\frac{q}{p-2}} \int_{B(0,R)}|\tilde u|^q\intd y
  \]
  as $\tilde u_n\to \tilde u$ uniformly in $\C([-R,R])$ and
  $\frac{\lambda_n}{|u_n(x_n)|^{p-2}}\to \tilde\lambda$.
\end{proof}

We now turn to the case where the maxima stay close to vertices and
prove the analogue of Lemma~\ref{concentration at a local maximum} in
this case.

\begin{lemma}
  \label{concentration at a local maximum v2}
  Assume that $\G \in \GGfin$, $p>2$, and \ref{H} holds and let
  $(u_n) \subseteq H^1(\G) \backslash \{0 \}$ be a sequence of
  solutions to~\eqref{eq:edo}.  Suppose that
  \begin{equation*}
    \lambda_n \to +\infty \quad \text{ and } \quad
    \varlimsup_{n \to \infty} \morse(u_n) < \infty.
  \end{equation*}
  Let $(x_n) \subseteq \G$ be a sequence of local maxima of $|u_n|$
  such that, for some sequence $\tilde R_n$, we have \eqref{tilde R_n}
  and \eqref{eq:loc-max-quantified1}.  Assume moreover that
  \begin{equation}
    \label{eq:dist<infty}
    \varlimsup_{n \to \infty}
    \abs{u_n(x_n)}^{\frac{p-2}{2}}\dist(x_n, \V) < +\infty.
  \end{equation}
  Then, passing if necessary to subsequences, the following holds:
  \begin{enumerate}[(i\/$'$\!)]
  \item All the $x_n$ lie in the same edge $e_1$
    on which one has $\rho_n \ge a$,
    and $x_n \to x^* \in \V$.
  \item Let $e_2, \dotsc, e_k$ be the other edges of $\G$ having $x^*$
    as a vertex. Each $e_i$, $1 \le i \le k$, is identified
    with the (possibly unbounded) interval having origin in $0$ and
    $x^*$ is identified with $0$.  Denote
    $\tilde \epsilon_n \coloneq |u_n(x_n)|^{-\frac{p-2}{2}}$.  At the
    limit $(e_i - x^*)/\tilde\epsilon_n$ will become a half-line that
    we denote by $e_i^*$.  These half-lines will be joined at their
    origin to form a star graph $\G_k$.  Consider the scaled sequences
    \begin{equation}
      \label{scale-epsilon-n2}
      \tilde u_n(y)
      \coloneq \frac{1}{|u_n(x_n)|} u_n(x^* + \tilde\epsilon_n  y)
      \qquad
      \text{defined for }
      y \in G_n\coloneq\bigcup_{i=1}^k \frac{e_i - x^*}{\tilde\epsilon_n} .
    \end{equation}
    The sequence $(\tilde u_n)$ converges in $\C^0_\loc(\G_k)$ and in
    $\C^1_\loc(e_i^*)$, for each $i = 1,\dotsc, k$, to a function
    $\tilde u \in H^1(\G_k)$ which is a nontrivial solution to
    \begin{equation}
      \label{eq:edo-V2}
      \begin{cases}
        -\tilde u'' + \tilde \lambda \tilde u
        = \tilde\rho(y) \abs{\tilde u}^{p-2} \tilde u, &\text{in } \G_k,\\
        \tilde u \text{ is continuous on } \G_k,  \\
        \displaystyle
        \sum_{i=1}^k \dd{\tilde u_{\edge_i}}{x}(0)=0,\\
      \end{cases}
    \end{equation}
    where $\tilde\lambda \in \intervaloc{0, b}$ is defined by
    \eqref{eq:equiv-epsilontilde-epsilon} and $\tilde\rho(\cdot)$ is the
    $\sigma(L^{\infty}, L^1)$ weak*-limit of
    $\rho_n(x^*+\tilde\epsilon_n \cdot)$.
  \end{enumerate}
  Moreover properties~\ref{limit-has-bounded-Morse-index} and
  \ref{localisation-fn-m>=1}
  from Lemma~\ref{concentration at a local maximum} hold and
  \begin{enumerate}[(c\/$'$\!)]
  \item for all $R > 0$ and $q \ge 1$, one has
    \begin{equation}
      \label{eq:mass-concentration-Lq2}
      \lim_{n\to\infty} \lambda_n^{\frac{1}{2} - \frac{q}{p-2}}
      \int_{B(x_n, R\tilde\epsilon_n)} \abs{u_n}^q \intd x
      = \tilde\lambda^{\frac12-\frac{q}{p-2}}
      \int_{B(\hat x, R)} \abs{\tilde u}^q \intd y.
    \end{equation}
    where $\hat x$ is a global maximum point of $\tilde u $ located on
    $e_1^*$ whose coordinate is given by
    \begin{equation*}
      \hat x \coloneq
      \lim_{n\to \infty} \frac{\dist(x_n, x^*)}{\tilde \epsilon_n}
      \in \intervalco{0, +\infty}.
    \end{equation*}
  \end{enumerate}
\end{lemma}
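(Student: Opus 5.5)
The plan is to follow the proof of Lemma~\ref{concentration at a local maximum} step by step. The only new features are the vertex $x^*$ and the star-graph limit. First extract subsequences so that all $x_n$ lie on a single edge $e_1$ (there are finitely many edges) and so that \eqref{eq:dist<infty} holds as a bounded limit. Lemma~\ref{lower-bound-max|u|} together with \ref{H} gives $|u_n(x_n)|^{p-2}\ge(\lambda_n-\bar W)/b\to\infty$, so $\tilde\epsilon_n\to0$. Hence $\dist(x_n,\V)\le C\tilde\epsilon_n\to0$, and since $\V$ is finite we may assume $\dist(x_n,x^*)/\tilde\epsilon_n\to\hat x\in[0,\infty)$ for one vertex $x^*$ of $e_1$. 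Remark~\ref{Rem rho>0} gives $\rho_n\ge a$ on $e_1$. After a further extraction we also have $\lambda_n/|u_n(x_n)|^{p-2}\to\tilde\lambda\in[0,b]$. Every edge at $x^*$ has length at least $\inf\ell_e>0$, so each rescaled edge $(e_i-x^*)/\tilde\epsilon_n$ contains $[0,r]$ for $n$ large. Consequently $G_n$ exhausts $\G_k$ locally.

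Next we build $\tilde u$. Since $\tilde u_n(y)=u_n(x^*+\tilde\epsilon_n y)/|u_n(x_n)|$ and $\tilde R_n/\tilde\epsilon_n\to\infty$, we get $|\tilde u_n|\le1$ on $B(0,r)$ for large $n$, and $|\tilde u_n(\hat x_n)|=1$ with $\hat x_n\to\hat x$ on $e_1^*$. The rescaled equation holds on each edge with coefficient $(W_n+\lambda_n)/|u_n(x_n)|^{p-2}\to\tilde\lambda$ uniformly, so $|\tilde u_n''|\le C$ on each edge. Ascoli--Arzelà and a diagonal argument, edge by edge, give $\C^1_\loc(e_i^*)$ convergence. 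Continuity at the vertex passes to the limit, and so does the Kirchhoff condition, because the one-sided derivatives at $0$ converge. The weights $\rho_n(x^*+\tilde\epsilon_n\cdot)$ converge weak* to $\tilde\rho$, and on each $e_i^*$ either $\tilde\rho\ge a$ or $\tilde\rho\equiv0$, with $\tilde\rho\ge a$ on $e_1^*$. Thus $\tilde u$ solves \eqref{eq:edo-V2} in the weak sense, with $|\tilde u(\hat x)|=1=\max|\tilde u|$, so $\tilde u$ is nontrivial.

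The Morse-index bound $\morse(\tilde u)\le\varlimsup\morse(u_n)$ is proved exactly as in Step~2 of the previous proof. We rescale test functions $\tilde\phi\in H^1(\G_k)\cap\Cc(\G_k)$ as $\phi_n(x)=\tilde\epsilon_n^{1/2}\tilde\phi((x-x^*)/\tilde\epsilon_n)$. These are continuous through $x^*$ and compactly supported inside the star of edges at $x^*$ for $n$ large. Then $Q_{u_n}(\phi_n;\G)\to Q_{\tilde u}(\tilde\phi;\G_k)$, and linear independence is preserved. With $\morse(\tilde u)<\infty$ and $\tilde u$ bounded, Corollary~\ref{Liouville-lambda=0} (with $W\equiv0$, using $\tilde\rho\ge a$ on $e_1^*$) forces $\tilde\lambda>0$. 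Lemma~\ref{Stable via Morse} and Proposition~\ref{stable-outside-a-compact} then show that $\tilde u$ is stable outside a compact set and hence, by Remark~\ref{rem1} with $W\equiv\tilde\lambda$, that $\tilde u\in H^1(\G_k)$. Lemma~\ref{Morse index>=1} gives $\morse(\tilde u)\ge1$, which completes (a).

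For (b) we take a $\Phi$ with $Q_{\tilde u}(\Phi;\G_k)<0$ and $\supp\Phi\subseteq B(0,R')$, and rescale it around $x^*$. Its support lies in $B(x^*,R'\tilde\epsilon_n)\subseteq B(x_n,(R'+\hat x+1)\tilde\epsilon_n)$, so the choice $R=R'+\hat x+1$ works. For (c$'$) we change variables: $B(x_n,R\tilde\epsilon_n)$ rescales to $B(\hat x_n,R)\subseteq G_n$, giving the factor $(\lambda_n/|u_n(x_n)|^{p-2})^{1/2-q/(p-2)}$. Uniform convergence on compacts, together with the fact that the symmetric difference of $B(\hat x_n,R)$ and $B(\hat x,R)$ has measure tending to zero, gives the limit. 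I expect the main obstacle to be the vertex passage: making the $\C^1$ compactness argument respect the Kirchhoff condition, especially when $\hat x=0$ or when $\tilde\rho\equiv0$ on some half-lines. The bounds on $\tilde u_n''$ are uniform on each edge, so this should reduce to edgewise convergence of the derivatives at $0$.
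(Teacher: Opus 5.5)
Your proposal is correct and follows essentially the same route as the paper. You rescale around the vertex $x^*$ and pass to the limit edge by edge via Ascoli--Arzel\`a, with continuity and the Kirchhoff condition surviving thanks to $\C^1$ convergence up to the origin. You then repeat the steps of Lemma~\ref{concentration at a local maximum}: the Morse bound via rescaled test functions, Corollary~\ref{Liouville-lambda=0} for $\tilde\lambda>0$, $H^1$ via stability outside a compact set, Lemma~\ref{Morse index>=1}, and a change of variables for (b) and (c$'$). You in fact supply details the paper leaves implicit, such as the edgewise dichotomy ($\tilde\rho\ge a$ or $\tilde\rho\equiv 0$) needed to invoke the Liouville corollary, and the explicit radius $R=R'+\hat x+1$ in (b).
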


\begin{remark}
  Observe that, if $\K\subseteq \G$ and $\rho_n=\alpha_n \chi_{\K}$ with
  $\alpha_n\to\alpha$ and $\alpha>0$, as in Remark \ref{Rem 3.5}, we
  show that $\tilde\rho = \alpha \chi_{\K}$.
\end{remark}

\begin{proof}
  As in the proof of Lemma \ref{concentration at a local maximum}, we
  can suppose that all $x_n$ belong to the same edge $e_1$ and we have
  \[
    |u_n(x_n)|\to\infty, \qquad \rho_n\geq a \,\,\mbox{ on }\,\,e_1.
  \]
  Passing to a subsequence, we can also assume  that
  $\lim_{n\to\infty} \morse(x_n)$ exists and
  \begin{equation*}
    \tilde\lambda
    \coloneq \lim_{n \to \infty}
    \frac{\lambda_n}{\abs{u_n(x_n)}^{p-2}}
    \in \intervalcc{0,b}.
  \end{equation*}
  \sloppy %
  By \eqref{eq:dist<infty}, we have that $(x_n)$ converges to a point
  $x^* \in e_1 \cap \V$.  Since
  $\dist(0, \partial G_n) = \min_{1\le i\le k} \ell_{e_i} /
  \tilde\epsilon_n \to +\infty$, any compact set $C \subset \G_k$ is
  included in $G_n$ for $n$ large enough.  Moreover, thanks
  to~\eqref{eq:dist<infty}, passing if necessary to a subsequence, one
  can assume that the sequence
  $\bigl(\dist(x_n, x^*) / \tilde\epsilon_n\bigr) =
  \bigl(|u_n(x_n)|^{\frac{p-2}{2}} \dist(x_n, \V) \bigr)$ converges and so
  \begin{equation*}
    \tilde x_n \coloneq \frac{x_n - x^*}{\tilde\epsilon_n}
    \xrightarrow[n \to\infty]{} \tilde x \in e^*_1.
  \end{equation*}
  Set
  $\overline{R}_n \coloneq \min\bigl\{ \dist(0, \partial
  G_n),\frac{\tilde R_n - \dist(x_n, x^*)}{\tilde\epsilon_n}\bigr\}
  \to +\infty$.  Note that, if $y \in B(0, \overline R_n)$, then
  $y \in G_n$, so $\tilde u_n(y)$ is well defined, and
  $$
  \dist(x^* + \tilde \epsilon_n y, \, x_n) \leq \tilde R_n.
  $$
  Given that $(\tilde x_n)$ converges, for $n$ large enough,
  $\tilde x_n \in B(0, \overline{R}_n)$ so that,
  assumption~\eqref{eq:loc-max-quantified1} implies
  \begin{equation}
    \label{eq:max-tilde-u2}
    1 = \abs{\tilde u_n(\tilde x_n)}
    = \max_{B(0, \overline{R}_n)} \abs{\tilde u_n}.
  \end{equation}
  Finally, $\tilde u_n$ satisfies the following equation
  \begin{equation*}
    -\tilde u_n'' + \frac{
      W_n(x^*+ \tilde\epsilon_n y) + \lambda_n}{|u_n(x_n)|^{p-2}}
    \tilde u_n
    = \rho_n(x^* + \tilde \varepsilon_n y) \abs{\tilde u_n}^{p-2} \tilde u_n
    \quad
    \text{in } G_n.
  \end{equation*}
  with the continuity and the Kirchhoff condition at $0$. Denote by
  $\tilde u_{n,i}$ the restriction of $\tilde u_n$ to
  $(e_i - x^*)/\tilde\epsilon_n$.  Therefore, as in the proof
  of Lemma \ref{concentration at a local maximum}, by Ascoli-Arzela
  theorem and passing if necessary to a subsequence, we have the
  existence of a function $\tilde u$ defined on $\G_k$ such that, for
  all $r$ and all $i = 1,\dotsc,k$, $(\tilde u_{n,i})$ converges to
  $\tilde u_i$ in $\C^1([0,r])$.  Given also that the value at $0$ of
  $\tilde u_{n,i}$ is the same on all edges, the same is true for the
  limit $\tilde u$ which is therefore well defined and continuous on
  $\G_k$.  Since this convergence takes place in $\C^1$ up to the
  origin, the Kirchhoff condition is preserved at the limit.  In all,
  the limit $\tilde u$ satisfies
  \begin{equation*}
    \begin{cases}
      -\tilde u'' + \tilde\lambda \tilde u
      = \tilde\rho(x) \abs{\tilde u}^{p-2} \tilde u
      \quad
      \text{in } \G_k, \\
      \tilde u \mbox{ continuous on } \G_k, \\
      \displaystyle
      \sum_{i=1}^k \dd{\tilde u_{\edge_i}}{x}(0)=0,\\
    \end{cases}
    \qquad\text{and}\qquad
    1 = \abs{\tilde u(\tilde x)} = \max_{\G_k} \abs{\tilde u},
  \end{equation*}
  where $\tilde\lambda \in (0, b]$ is defined by
  \eqref{eq:equiv-epsilontilde-epsilon}, $\tilde\rho(\cdot)$ is the
  $\sigma(L^{\infty}, L^1)$ weak*-limit of
  $\rho_n(x^*+\tilde\epsilon_n \cdot)$ is such that
  $\tilde\rho_{\mid e^*_1}(\cdot)\ge a$.

  We conclude following the same lines as in the proof of
  Lemma~\ref{concentration at a local maximum}.
\end{proof}

\begin{proof}[Proof of Theorem \ref{main}]
  Recall that, by Lemma \ref{lower-bound-max|u|}, for every local
  maximum $x_n^i$ of $u_n$ with $u_n(x_n^i)\not=0$, we have
  $|u_n(x_n^i)|\to\infty$.

  The proof is divided into three parts.

  \medskip\noindent %
  \textbf{Part 1. Passing if necessary to subsequences,
    there exist $m \in \{1,\dotsc,m^*\}$ and sequences
    $(x^1_n),\dotsc, (x^m_n)$,
    $(R^1_n),\linebreak[2] \dotsc,\linebreak[2] (R^m_n)$
    satisfying~\eqref{eq:dist-points}--\eqref{eq:loc-max} such that
    \begin{equation}
      \label{eq:small-residual}
      \lim_{R \to +\infty} \varlimsup_{n \to \infty}
      \Bigl( \lambda_n^{-1/(p-2)} \max_{d_n(x) \ge R \lambda_n^{-1/2}}
      \abs{u_n(x)} \Bigr) = 0,
    \end{equation}
    where $d_n(x) \coloneq \min_{1\le i \le m} \dist(x, x^i_n)$.
    \pagebreak[2]}
  \medbreak

  {\itshape Step 1: There exists $(x_n^1)$ and $(\tilde R^1_n)$ with
    $\tilde R_n^1 \,|u_n(x_n^1)|^{\frac{p-2}{2}} \to +\infty$ such
    that \eqref{eq:loc-max-quantified1} holds and the scaled sequence
    $(\tilde u^1_n)$ defined by~\eqref{scale-epsilon-n} or
    \eqref{scale-epsilon-n2}, (with $x_n = x^1_n$ and
    $\tilde\epsilon_n^1 \coloneq |u_n(x_n^1)|^{-\frac{p-2}{2}}$), according to
    the case we are in, converges in
    $\C^0_\loc(\G_{k^1}) \cap \C^1_\loc(\G_{k^1}\setminus\{0\})$ to a
    nontrivial solution $\tilde u^1\in H^1(\G_{k^1})$
    of~\eqref{eq:edo-V}, respectively of~\eqref{eq:edo-V2} (with
    $\G_{k^1} \coloneq \IR$ if \eqref{eq:dist->infty} is satisfied or
    $\G_{k^1}$ is the star graph defined as in Lemma
    \ref{concentration at a local maximum v2} if \eqref{eq:dist<infty}
    holds) with
    \begin{equation}
      \label{eq:lambda1}
      \tilde\lambda^1
      \coloneq \lim_{n \to \infty} \frac{\lambda_n}{\abs{u_n(x_n^1)}^{p-2}}
      = \lim_{n \to \infty} \lambda_n \, (\tilde\varepsilon^1_n)^2
      \in \intervaloc{0, b}
    \end{equation}
    and $\tilde\rho^1$
    defined accordingly}.  \medbreak

  As $u_n$ is in $H^1(\G)$, we know that $u_n(x) \to 0$ as
  $x \to \infty$ and the global maximum of $\abs{u_n}$ on $\G$ is
  achieved at a point $x^1_n$. Since the solution is nontrivial,
  $u_n(x^1_n) \ne 0$.  Therefore \eqref{eq:loc-max-quantified1} is
  satisfied for any sequence $\tilde R^1_n$ with
  $\tilde R_n^1 \,|u_n(x_n^1)|^{\frac{p-2}{2}} \to +\infty$.  The rest
  of the step follows from Lemmas \ref{concentration at a local
    maximum} and \ref{concentration at a local maximum v2}.  \medbreak

  \textit{Step 2: If \eqref{eq:small-residual} is not satisfied for
    $m = 1$, there exists $(x_n^2)$ and $(\tilde R^2_n)$ with
    $\tilde R_n^2 \,|u_n(x_n^2)|^{\frac{p-2}{2}} \to +\infty$ such
    that $\lambda_n^{1/2} \dist(x^1_n, x^2_n) \to +\infty$,
    $\abs{u_n(x_n^2)} = \max_{B(x_n^2, \tilde R_n^2)}
    \abs{u_n}$ and the scaled sequence $(\tilde u^2_n)$ defined
    by~\eqref{scale-epsilon-n} or \eqref{scale-epsilon-n2}, (with
    $x_n = x^2_n$ and
    $\tilde\epsilon_n^2 \coloneq |u_n(x_n^2)|^{-\frac{p-2}{2}}$), according to
    the case we are in, converges in
    $\C^0_\loc(\G_{k^2}) \cap \C^1_\loc(\G_{k^2}\setminus\{0\})$ to a
    nontrivial solution $\tilde u^2\in H^1(\G_{k^2})$
    of~\eqref{eq:edo-V}, respectively of~\eqref{eq:edo-V2} (with
    $\G_{k^2} \coloneq \IR$ if \eqref{eq:dist->infty} is satisfied or
    $\G_{k^2}$ is the star graph defined as in Lemma
    \ref{concentration at a local maximum v2} if \eqref{eq:dist<infty}
    holds) with
    $\displaystyle\tilde\lambda^2 \coloneq \lim_{n \to \infty}
    \frac{\lambda_n}{\abs{u_n(x_n^2)}^{p-2}}
    = \lim_{n \to \infty} \lambda_n (\tilde\varepsilon^2_n)^2
    \in \intervaloc{0,b}$ and $\tilde\rho^2$
    defined accordingly}.  \medbreak

  If \eqref{eq:small-residual} is not satisfied for $m = 1$, the
  following holds:
  \begin{equation*}
    \varlimsup_{R \to +\infty} \varlimsup_{n \to \infty}
    \Bigl( \lambda_n^{-1/(p-2)} \max_{\dist(x, x^1_n) \ge R \lambda_n^{-1/2}}
    \abs{u_n(x)} \Bigr) =: 4\delta > 0.
  \end{equation*}
  Thus, there exists $R$ as large as we want such that, passing if
  necessary to a subsequence, we have
  \begin{equation}
    \label{eq:remaining-mass1}
    \forall n,\qquad
    \lambda_n^{-1/(p-2)} \max_{\dist(x, x^1_n) \ge R \lambda_n^{-1/2}}\abs{u_n(x)}
    \ge 2\delta.
  \end{equation}
  Taking a larger $R$ if necessary, we may also assume that
  \begin{equation}
    \label{eq:small-V1}
    \forall y \in \G_{k^1},\qquad
    \dist(y, \hat x^1) \ge \frac{R}{(\tilde \lambda^1)^{1/2}} \ \limplies\
    \abs{\tilde u^1(y)}
    \le \delta \, \bigl(\tilde\lambda^1\bigr)^{1/(p-2)},
  \end{equation}
  where $\hat x^1 \coloneq 0$ if we applied Lemma \ref{concentration at
    a local maximum} or $\hat x^1$ is a maximum point of $\tilde u^1$
  if we applied Lemma \ref{concentration at a local maximum v2}.  Again
  because $u_n$ vanishes at infinity, for all $n\in \mathbb N$, there
  exists $x^2_n$ such that
  $\dist(x^2_n, x^1_n) \ge R \lambda_n^{-1/2}$ and
  \begin{equation}
    \label{eq:max-x2n}
    \abs{u_n(x^2_n)}
    = \max_{\dist(x, x^1_n) \ge R  \lambda_n^{-1/2}} \abs{u_n(x)}
    > 0.
  \end{equation}

  \noindent\textit{Claim :
    $\dist(x^2_n, x^1_n) \, \lambda_n^{1/2} \to +\infty$.}

  Indeed, if up to a subsequence
  $\dist(x^2_n, x^1_n)  \lambda_n^{1/2} \to R' \in [R,\infty)$,
  then, using~\eqref{eq:lambda1},
  \[
    \frac{\dist(x^2_n, x^1_n)}{\tilde\varepsilon^1_n}
    =
    \dist(x^2_n, x^1_n) \, \lambda_n^{1/2}\,
    \frac{1}{\lambda_n^{1/2} \, \tilde\varepsilon^1_n}
    \to \frac{R'}{(\tilde \lambda^1)^{1/2}},
  \]
  thus $\dist(x^2_n, x^1_n) \to 0$ and either $x^2_n$ would lie on the same
  edge as $x^1_n$ when \eqref{eq:dist->infty} holds, or on an edge
  adjacent to $x^{*,1} \coloneq \lim x^1_n$ when \eqref{eq:dist<infty}
  holds.
  Observe also that, given~\eqref{eq:lambda1},
  \[
    \varlimsup_{n \to \infty}
    \lambda_n^{1/2} \dist(x_n^1, \V) = +\infty
    \qquad
    \text{ if and only if }
    \qquad
    \varlimsup_{n \to \infty}
    \abs{u_n(x_n^1)}^{(p-2)/2} \dist(x_n^1, \V) = +\infty.
  \]
  Therefore, we can define the sequence $(\hat x^2_n)$ by
  \begin{numcases}{\hat x^2_n \coloneq}
    \label{eq:x2n-1}
    \displaystyle
    \frac{x^2_n - x^1_n}{\tilde\varepsilon^1_n} 
    &$\displaystyle\text{if } \varlimsup_{n \to \infty}
    \lambda_n^{1/2} \dist(x_n^1, \V) = +\infty$,\\
    \label{eq:x2n-2}
    \displaystyle
    \frac{x^2_n - x^{*,1}}{\tilde\varepsilon^1_n}
    &$\displaystyle\text{if } \varlimsup_{n \to \infty}
    \lambda_n^{1/2} \dist(x_n^1, \V) < +\infty$.
  \end{numcases}
  Recall that in the second case
  $\lim_{n\to\infty} \dist(x_n^1,x^{*,1}) / \tilde\varepsilon^1_n
  = \hat x^1\in \intervalco{0,+\infty}$ on $e_1^*$.

  Recall that
  $\dist(x^2_n, x^1_n) \lambda_n^{1/2} \to R' \in [R,\infty)$ and
  hence, up to a subsequence, $\hat x^2_n \to \hat x^2 \in \G_{k^1}$
  such that
  $\dist(\hat x^2, \hat x^1) \ge R / (\tilde\lambda^1)^{1/2}$, but then
  \eqref{eq:remaining-mass1}--\eqref{eq:small-V1} yield the
  following contradiction:

  In case~\eqref{eq:x2n-1} holds:
  \begin{equation*}
    2\delta \le \lambda_n^{-1/(p-2)} \abs{u_n(x^2_n)}
    =\lambda_n^{-1/(p-2)} \abs{u_n(x^1_n)}\,
    \tilde u_n(\hat x^2_n)
    \xrightarrow[n \to \infty]{}
    \bigl(\tilde\lambda^1\bigr)^{-\frac{1}{p-2}}
    \abs{\tilde u^1(\hat x^2)}
    \le \delta.
  \end{equation*}
  
  In case~\eqref{eq:x2n-2} holds:
  \begin{equation*}
    2\delta \le \lambda_n^{-1/(p-2)} \abs{u_n(x^2_n)}
    = \lambda_n^{-1/(p-2)} \abs{u_n(x^1_n)}\,\,\,
    \tilde u_n(\hat x^2_n)
    \xrightarrow[n \to \infty]{}
    \bigl(\tilde\lambda^1\bigr)^{-\frac{1}{p-2}}
    \abs{\tilde u^1(\hat x^2)}
    \le \delta.
  \end{equation*}
  This proves the claim.
  \medbreak

  Let $\tilde R^2_n \coloneq \tfrac{1}{2} \dist(x^2_n, x^1_n)$.
  Clearly, for all $x$ such that $\dist(x, x^2_n) < \tilde R^2_n$,
  we have
  \begin{equation*}
    \dist(x, x^1_n)
    \ge \dist(x^1_n, x^2_n) - \dist(x, x^2_n)
    > \dist(x^1_n, x^2_n) - \tilde R^2_n
    = \tilde R_n^2.
  \end{equation*}
  The above claim reads $\tilde R^2_n \lambda_n^{1/2} \to +\infty$.
  Thus, for $n$ large enough, $\tilde R_n^2 \ge R \lambda_n^{-1/2}$ and
  $B(x^2_n, \tilde R^2_n) \subseteq \{ x \mid \dist(x, x^1_n)
  \ge R\lambda_n^{-1/2} \}$ and, thanks to~\eqref{eq:max-x2n},
  assumption~\eqref{eq:loc-max-quantified1} holds with
  $x_n = x^2_n$.  Moreover, using~\eqref{eq:lambda1},
  assumption~\eqref{tilde R_n} is also satisfied.
  Therefore, the scaled sequence $(\tilde u^2_n)$ (using
  either~\eqref{scale-epsilon-n} or~\eqref{scale-epsilon-n2} depending
  on the behavior of $(x^2_n)$) converges in
  $\C^0_\loc(\G_{k^2}) \cap \C^1_\loc(\G_{k^2} \setminus\{0\})$ to a
  function $\tilde u^2 \in H^1(\G_{k^2})$ which satisfies
  either~\eqref{eq:edo-V} or~\eqref{eq:edo-V2} with the corresponding
  parameters.
  \medbreak
  
  \textit{Step 3: Suppose we have $(x^1_n), \dotsc, (x^s_n)$
    satisfying \eqref{eq:dist-points}--\eqref{eq:loc-max} and such
    that \eqref{eq:small-residual} does not hold with $m = s$.  Then,
    there exists $(x_n^{s+1})$ and $(\tilde R^{s+1}_n)$ with
    $\tilde R_n^{s+1} \,|u_n(x_n^{s+1})|^{\frac{p-2}{2}} \to +\infty$
    such that $\lambda_n^{1/2} \dist(x^i_n, x^{s+1}_n) \to +\infty$,
    for all $i\in\{1,\dotsc,s\}$ ,
    $\abs{u_n(x_n^{s+1})} = \max_{B(x_n^{s+1}, \tilde
      R_n^{s+1})} \abs{u_n}$ and the scaled sequence
    $(\tilde u^{s+1}_n)$ defined by~\eqref{scale-epsilon-n} or
    \eqref{scale-epsilon-n2} (with $x_n = x^{s+1}_n$ and and
    $\tilde\epsilon_n^{s+1} \coloneq |u_n(x_n^{s+1})|^{-\frac{p-2}{2}}$),
    according to the case we are in, converges in
    $\C^0_\loc(\G_{k^{s+1}}) \cap
    \C^1_\loc(\G_{k^{s+1}}\setminus\{0\})$ to a nontrivial solution
    $\tilde u^{s+1}\in H^1(\G_{k^{s+1}})$ of~\eqref{eq:edo-V},
    respectively of~\eqref{eq:edo-V2} (with $\G_{k^{s+1}} = \IR$
    if \eqref{eq:dist->infty} is satisfied or the star graph defined
    as in Lemma \ref{concentration at a local maximum v2} if
    \eqref{eq:dist<infty} holds) and the corresponding parameters}.
  \medbreak

  If we have $(x^1_n), \dotsc, (x^s_n)$ such that
  \eqref{eq:small-residual} does not hold with $m = s$, as previously,
  there exist values of $R$ as large as we like such that, passing if
  necessary to subsequences,
  \begin{equation*}
    \forall n,\qquad
    \lambda_n^{-1/(p-2)} \max_{d^s_n(x) \ge R \lambda_n^{-1/2}}\abs{u_n(x)}
    \ge 2\delta > 0.
  \end{equation*}
  where $d^s_n(x) \coloneq \min_{1\le i \le s} \dist(x, x^i_n)$.  Taking $R$
  larger if necessary, we may also assume that
  \begin{equation*}
    \forall i = 1,\dotsc, s,\  \forall y \in \G_{k^i},\qquad
    \dist(y, \hat x^i) \ge \frac{R}{(\tilde\lambda^i)^{1/2}}
    \ \limplies\
    \abs{\tilde u^i(y)} \le \delta \bigl(\tilde\lambda^i\bigr)^{1/(p-2)}.
  \end{equation*}
  Consider $(x^{s+1}_n)$ a sequence such that
  \begin{equation*}
    \forall n,\qquad
    \abs{u_n(x^{s+1}_n)}
    = \max_{d^s_n(x) \ge R \lambda_n^{-1/2}} \abs{u_n(x)} > 0.
  \end{equation*}
  For all $i = 1,\dotsc,s$, we establish as before that
  $\dist(x^{s+1}_n, x^i_n)\lambda_n^{1/2} \to +\infty$. In this way,
  \eqref{eq:dist-points} holds for $(x^1_n), \dotsc, (x^{s+1}_n)$.
  Set
  $\tilde R^{s+1}_n \coloneq \tfrac{1}{2} d^s_n(x^{s+1}_n)$.
  We then have $R^{s+1}_n \coloneq \tilde R^{s+1}_n \lambda_n^{1/2}
  \to +\infty$.  For all $i = 1,\dotsc, s$ and
  for $n$ large, the same argument as above shows that, for all
  $x \in \G$,\linebreak[2]
  $\dist(x, x^{s+1}_n) < \tilde R^{s+1}_n
  \limplies \dist(x,x^i_n) > \tilde R^{s+1}_n$.  Therefore,
  for $n$ large enough, $\tilde R^{s+1}_n \ge R \lambda_n^{-1/2}$ and
  $B(x^{s+1}_n, \tilde R^{s+1}_n) \subseteq \{ x \mid
  d^s_n(x) \ge R \lambda_n^{-1/2} \}$
  and assumption~\eqref{eq:loc-max-quantified1} of
  Lemma~\ref{concentration at a local maximum} is satisfied with
  $x_n \coloneq x^{s+1}_n$, i.e.\ \eqref{eq:loc-max} holds for
  $i = s+1$. As a result, the
  scaled sequence $(\tilde u^{s+1}_n)$ (using
  either~\eqref{scale-epsilon-n} or~\eqref{scale-epsilon-n2} depending
  on the behavior of $(x^{s+1}_n)$) converges in
  $\C_\loc^0(\G_{k^{s+1}}) \cap \C_\loc^1(\G_{k^{s+1}}
  \setminus\{0\})$ to a function
  $\tilde u^{s+1} \in H^1(\G_{k^{s+1}}) \setminus \{0\}$ that
  satisfies either~\eqref{eq:edo-V} or~\eqref{eq:edo-V2} with the
  corresponding parameters.  \medbreak

  \textit{Step 4: This construction cannot be pushed to an $m > m^*$.}
  \medbreak

  In order to do so, remember that, for all
  $i = 1,\dotsc, m$, the consequence~\ref{localisation-fn-m>=1}
  of Lemmas~\ref{concentration
    at a local maximum} or~\ref{concentration at a local maximum v2}
  yields a sequence of functions $(\phi^i_n)$ such that (recalling
  \eqref{eq:equiv-epsilontilde-epsilon})
  \begin{equation*}
    \forall n,\quad
    \supp \phi^i_n \subseteq B(x^i_n, R\lambda_n^{-1/2})
  \end{equation*}
  for some $R$ (taking $R$ sufficiently large so it is independent of
  $i$) and~\eqref{eq:Morse-index-1concentration} holds. Due
  to~\eqref{eq:dist-points}, the supports of the functions $\phi^i_n$,
  $i = 1, \dotsc, m$ are disjoint for $n$ large enough.  As a
  consequence, \eqref{eq:Morse-index-1concentration} implies that, for
  $n$ large,
  \begin{equation*}
    \forall \phi \in \spanned\{\phi^1_n,\dotsc, \phi^m_n\}
    \setminus\{0\},\quad
    \int_\G \abs{\phi'}^2 + \bigl(\lambda_n+ W_n(x)\bigr) \phi^2
    - (p-1) \rho_n(x) \abs{u_n}^{p-2} \phi^2 \intd x
    < 0,
  \end{equation*}
  a contradiction with the assumption that $\morse(u_n) \le m^*$.

  \medbreak
  \noindent %
  \textbf{Part 2.  Proof of the bound~\eqref{eq:exp-bound}.}

  Given $R > 0$ and $n \in \mathbb{N}$, let
  $\displaystyle A_{R,n} \coloneq \bigl\{x \in \G \bigm| d_n(x) \ge R
    \lambda_n^{-1/2} \bigr\} = \bigcap_{i=1}^m \complement
  B\bigl(x^i_n, R\lambda_n^{-1/2}\bigr)$.

  \medbreak
  \textit{Step~1: Let us prove that}
  $\forall \epsilon > 0,\
    \exists R_\epsilon,\ \forall R \ge R_\epsilon,\
    \exists n_R,\ \forall n \ge n_R,\ \forall x \in \G,$
  \begin{equation}
    \label{eq:exp-bound-explicit}
    x \in A_{R,n} \quad \limplies \quad  \abs{u_n(x)}
    \le \epsilon \e^{R} \lambda_n^{1/(p-2)}
    \exp\bigl( -\tfrac{1}{2} \lambda_n^{1/2} d_n(x) \bigr).
  \end{equation}
  
  Let $\epsilon > 0$.  Taking $n$ sufficiently large, we may assume
  without loss of generality that
  \begin{equation}
    \label{eq:condition-epsilon}
    \epsilon^{p-2} \le   \frac{2^{p-4}}{b}
    \quad \text{and} \quad
    4 \bar W\leq \lambda_n
  \end{equation}
  where $\bar W$ and $b$ are given by assumption \ref{H}.
  By~\eqref{eq:small-residual}, there exists $R_\epsilon\geq 8$ such
  that, for all $R \ge R_\epsilon$, there is an $n_R$ for which
  \begin{equation}
    \label{eq:u-small-An}
    \forall n \ge n_R,\ \forall x \in \G,\qquad
    x \in A_{R,n}\ \limplies \
    \abs{u_n(x)} \le \tfrac{1}{2} \epsilon \lambda_n^{1/(p-2)}.
  \end{equation}
  Moreover, we can take $R_\epsilon$ larger if necessary so that, for
  all $i = 1,\dotsc, m,$
  \begin{equation}
    \label{eq:R-large-take-vertices}
    d^i_\infty
    \coloneq \varlimsup_{n \to \infty} \lambda_n^{1/2} \dist(x^i_n, \V)
    < \infty
    \ \limplies\
    R_\epsilon \ge 2 d^i_\infty.
  \end{equation}

  In the aim to establish \eqref{eq:exp-bound-explicit}, we first need
  to describe more precisely the set $A_{R,n}$. First note that,
  passing if necessary to sub-se\-quen\-ces, we can assume that, if
  two sequences of points $(x^i_n)$ and $(x^j_n)$, $i \ne j$, lie in
  the same edge of $\G$, then either $\forall n,\ x^i_n < x^j_n$ or
  vice versa.  For each edge $e$ of $\G$, observe that
  $e \setminus B(x^i_n, R\lambda_n^{-1/2})$ may remove at most two
  intervals from $e$ (two intervals may be removed only if
  $e$ is
  a loop, possibly choosing $n_R$ larger).  Thus
  $e \cap A_{R,n}= e \setminus \bigcup_{i=1}^m B(x^i_n,
  R\lambda_n^{-1/2})$ consists of finitely many closed intervals or
  closed half-lines, the number of which is bounded independently of
  $R$ and
  $n$. Adding (if not already present) vertices at the boundary of
  these intervals makes $A_{R,n}$ a (possibly disconnected) graph.


  Let $e_n$ be any bounded edge of $A_{R,n}$ of length $\ell_{e_n}$. We
  will now show that, by possibly taking $n_R$ larger, one has
  \begin{equation}
    \label{eq:length-lower-bound}
    \forall n \geq n_R,\quad
    \lambda_n^{1/2} \ell_{e_n} \geq 4.
  \end{equation}
  Indeed, let $e=\intervalcc{\vv_{0}, \vv_{1}}$ be an edge of $\G$
  delimited by the vertices $\vv_0$ and $\vv_1$ and
  $e_n = \intervalcc{\vv_{0,n}, \vv_{1,n}}$ be a connected component
  of $e \cap A_{R,n}$ (i.e.\ an edge of $A_{R,n}$) of finite length
  $\ell_{e_n}$.  Let us distinguish several cases.
  \begin{itemize}
  \item If
    $\abs{\vv_0 - \vv_{0,n}} \le R \lambda_n^{-1/2}$ and
    $\abs{\vv_1 - \vv_{1,n}} \le R \lambda_n^{-1/2}$,
    then $\ell_{e_n} \ge \ell_e - 2R\lambda_n^{-1/2}$ and so
    $\lambda_n^{1/2} \ell_{e_n} \to +\infty$ as $n \to \infty$. This
    is in particular true if $e_n = e$.
  \item If
    $e_n = \intervalcc{x^i_n + R\lambda_n^{-1/2}, \, x^j_n -
      R\lambda_n^{-1/2}}$ where $x^i_n \in e$ and $x^j_n \in e$
    are such that $\forall n,\ x^i_n < x^j_n$, then
    $\ell_{e_n} = \abs{x^i_n - x^j_n} - 2R\lambda_n^{-1/2} \ge
    \dist(x^i_n, x^j_n) - 2R\lambda_n^{-1/2}$ and so, thanks
    to~\eqref{eq:dist-points}, one again has that
    $\lambda_n^{1/2} \ell_{e_n} \to +\infty$.
  \item The remaining case is
    $e_n = \intervalcc{\vv_{0,n}, x^i_n - R\lambda_n^{-1/2}}$ where
    $\abs{\vv_0 - \vv_{0,n}} \le R\lambda_n^{-1/2}$ and $x^i_n \in e$
    (or the similar case near $\vv_1$).  Then
    \begin{equation}
      \label{eq:length-near-bd}
      \ell_{e_n} = \abs{\vv_0 - x^i_n}
      - \abs{\vv_0 - \vv_{0,n}} - R\lambda_n^{-1/2}.
    \end{equation}
    We claim that necessarily
    $\lambda_n^{1/2} \abs{\vv_0 - x^i_n} \to +\infty$.  If not,
    passing to a subsequence if necessary, $\abs{\vv_0 - x^i_n} \to 0$
    and so $\dist(x^i_n, \V) = \abs{\vv_0 - x^i_n}$.  Indeed,
    $\dist(x^i_n, \V) = \abs{\vv_0 - x^i_n}$ holds as soon as
    $\abs{\vv_0 - x^i_n} \le \underline{\ell}/2$
    where $\underline{\ell} > 0$ is the minimal length
    of an edge.  Hence we have
    $\displaystyle \varlimsup_{n \to \infty} \lambda_n^{1/2}
    \dist(x^i_n, \V)<\infty$.  Now,
    using~\eqref{eq:length-near-bd}
    and~\eqref{eq:R-large-take-vertices}, one deduces that
    \begin{math}
      \ell_{e_n} \le \dist(x^i_n, \V) - R\lambda_n^{-1/2}
      \le \bigl(d^i_\infty + o(1)\bigr) \lambda_n^{-1/2}
      - R \lambda_n^{-1/2}
      \le \bigl(d^i_\infty + o(1) - R_\varepsilon \bigr) \lambda_n^{-1/2}
      < 0
    \end{math}
    for $n$ large, a contradiction.  Therefore
    $\lambda_n^{1/2} \ell_{e_n} \ge \lambda_n^{1/2} \abs{\vv_0 -
      x^i_n} - 2R \to +\infty$.
  \end{itemize}
  Since the number of edges of $A_{R,n}$ is bounded independently of
  $n$, we can take $n_R$ large enough so that
  \eqref{eq:length-lower-bound} holds in all possible cases.

  Next, we will further split some edges of $A_{R,n}$ to obtain a
  simple relationship between local coordinates on each edge and the
  global distance function $d_n$ defined in~\eqref{eq:exp-bound}.
  Let $e_n$ be an edge of $A_{R,n}$.  We need to distinguish two
  cases.
  \begin{itemize}
  \item The edge $e_n = \intervalco{\vv_{0,n},+\infty}$ is unbounded.
    We identify it with $\intervalco{0, +\infty}$.  In this case none
    of the $x^i_n$,
      $i = 1,\dotsc, m$, can belong to $e_n$ (but may belong to
      $e\setminus e_n$ with $e$ being the infinite edge of $\G$
      containing $e_n$). Thus, all paths joining a $x \in e_n$ to a
      $x_n^i$ must go through the sole node $\vv_{0,n}$ of $e_n$,
      identified with 0. In all, for all $x \in e_n$,
      $d_n(x) = d_n(0) + x$.
    \item The edge $e_n=[\vv_{0,n},\vv_{1,n}]$ is bounded.  We
      identify it with $\intervalcc{0, \ell_{e_n}}$ so that
      $\vv_{0,n}$ has coordinate $0$ and $\vv_{1,n}$ has coordinate
      $\ell_{\e_n}$.  The paths from any $x \in e_n$ to a $x^i_n$,
      $i = 1,\dotsc,m$, must exit $e_n$ by one of its vertices and
      thus
    \begin{equation}
      \label{eq:varying distance}
      d_n(x) = \min\{ d_n(\vv_{0,n}) + x,\,
      d_n(\vv_{1,n}) + \ell_{e_n} - x \}.
    \end{equation}
    If regardless of the $x \in e_n$ chosen the quantity $d_n(x)$ is
    always given by $d_n(\vv_{0,n})+x$ we leave $e_n$ unchanged. If
    $d_n(x)$ is always given by $d_n(\vv_{1,n}) + \ell_{e_n} - x$ we
    parametrize $e_n$ so that $\vv_{1,n}$ has coordinate $0$ and
    $\vv_{0,n}$ has coordinate $\ell_{e_n}$.  Now, if $d_n(x)$ changes
    its expression then there exists a unique coordinate $\tilde x_n$
    such that $d_n(x) = d_n(\vv_{0,n}) + x$ for
    $x \in \intervalcc{0, \tilde x_n}$ and
    $d_n(x) = d_n(\vv_{1,n}) + \ell_{e_n} - x$ for
    $x \in \intervalcc{\tilde x_n, \ell_{e_n}}$.  If the length of
    both $\intervalcc{0, \tilde x_n}$ and
    $\intervalcc{\tilde x_n, \ell_{e_n}}$ is greater or equal to
    $4\lambda_n^{-1/2}$, we add a new node $\vv_{2,n}$ of degree $2$
    at the point of coordinate $\tilde x_n$ splitting the edge $e_n$
    into $\intervalcc{\vv_{0,n}, \vv_{2,n}}$ and
    $\intervalcc{\vv_{1,n}, \vv_{2,n}}$ (the latter being parametrized
    so that $\vv_{1,n}$ has coordinate~$0$).  On these two
    (reparametrized) new edges, the distance function $d_n$ takes the
    form:
    \begin{equation}
      \label{eq:appropriate-local-coordinates}
      d_n(x) = d_n(0) + x.
    \end{equation}
    If one of the lengths is less than $4\lambda_n^{-1/2}$, we leave
    $e_n$ unchanged. Note that for this case we will still have to
    deal with \eqref{eq:varying distance}.
  \end{itemize}

  The above procedure ensures that \eqref{eq:length-lower-bound}
  remains valid for the new edges.  We still denote $A_{R,n}$ the graph
  with these additional degree $2$ nodes. \medskip

  We may finally turn to proving the inequality
  \eqref{eq:exp-bound-explicit}. Define the linear operator
  $L_n(u) \coloneq - u'' + \tilde W_n(x) u$ where
  $$
  \tilde W_n(x) \coloneq W_n(x) + \lambda_n -\rho_n(x)
  \abs{u_n(x)}^{p-2}.
  $$
  The fact that $u_n$ is a solution to~\eqref{eq:edo} can be written
  as $L_n(u_n) = 0$. For all $R \ge R_\epsilon$ observe that
  \eqref{eq:condition-epsilon} and property~\eqref{eq:u-small-An}
  imply that
  \begin{equation}
    \label{eq:an>=1/2}
    \forall n \ge n_R,\ \forall x \in A_{R,n}, \quad
    \tilde W_n(x) \ge \lambda_n
    \Bigl(1 - \frac{1}{4}- \rho_n(x) \frac{1}{2^{p-2}} \epsilon^{p-2} \Bigr)
    \ge \frac{\lambda_n}{2}.
  \end{equation}

  Fix $n$ large enough and let $\A_{R,n}$ be a connected component of
  $A_{R,n}$.
  If $\A_{R,n}$ is a single point, then $\A_{R,n}$ is an isolated
  point $x$ of $\partial A_{R,n}$ and so
  $d_n(x) = R \lambda_n^{-1/2}$.  In view of \eqref{eq:u-small-An} we
  have
  \begin{equation*}
    \abs{u_n(x)}
    \le \tfrac{1}{2} \epsilon \lambda_n^{1/(p-2)}
    \le \epsilon \e^{R/2} \lambda_n^{1/(p-2)}
    \exp\bigl( -\tfrac{1}{2} \lambda_n^{1/2} d_n(x) \bigr)
  \end{equation*}
  and \eqref{eq:exp-bound-explicit} holds.  Now assume $\A_{R,n}$
  possesses at least one edge.  We will construct below a positive
  function
  $\phi \in \C(\A_{R,n}) \cap L^\infty(\A_{R,n}) \cap L^2(\A_{R,n})$
  of class $\C^ \infty$ on each edge of $\A_{R,n}$ (remember that we
  have added degree $2$ vertices) such that
  \begin{enumerate}[(i)]
  \item\label{L>=0}
    $L_n(\phi) \ge 0$ on each edge of $\A_{R,n}$;
  \item\label{neg-at-vertices}
    $\sum_{e \incident \vv}
    \dd{\varphi_{\edge}}{x}(\vv)
    \leqslant 0$ for each vertex
    $\vv$ of $\A_{R,n}$;
  \item\label{phi-lower-bd}
    $\phi(x) \ge \e^{-R/2}$ for all
    $x \in \partial\A_{R,n}$;
  \item\label{phi-upper-bd}
    $\phi(x) \le \e^{R/2}
    \exp\bigl( -\tfrac{1}{2} \lambda_n^{1/2} d_n(x) \bigr)$
    for all $x \in \A_{R,n}$.
  \end{enumerate}
  Assuming we have such a function $\phi$, let
  $\psi_{\pm} \coloneq \epsilon \lambda_n^{1/(p-2)} \e^{R/2} \phi \pm
  u_n$.  Thanks to~\ref{L>=0},
  $$
    L_n(\psi_{\pm}) = \epsilon \lambda_n^{1/(p-2)} \e^{R/2} L_n(\phi) \ge 0.
  $$
  Since $\A_{R,n} \ne \G$ and $\G$ is connected,
  we have that  $\partial \A_{R,n} \ne \emptyset$.
  For any $\vv \in \partial\A_{R,n}$ (a vertex of
  the graph $\A_{R,n}$), \eqref{eq:u-small-An} and \ref{phi-lower-bd}
  imply that
  \begin{equation*}
    \psi_{\pm}(\vv)
    = \epsilon \lambda_n^{1/(p-2)} \e^{R/2} \phi(\vv) \pm u_n(\vv)
    \ge \epsilon \lambda_n^{1/(p-2)} \pm u_n(\vv)
    \ge \tfrac{1}{2}\epsilon \lambda_n^{1/(p-2)} > 0.
  \end{equation*}
  For all other vertices $\vv$ of $\A_{R,n}$, including the added
  ones of degree~$2$, $u_n{}_{\mid\A_{R,n}}$ satisfies the Kirchhoff
  condition, whence,
  thanks to \ref{neg-at-vertices},
  $\sum_{e \incident \vv} \dd*{\psi_{\pm,e}}{x}(\vv) \le 0$.
  Moreover,
  $\psi_{\pm} \in L^2(\A_{R,n})$.
  In view of Proposition~\ref{order-preserving-principle}, as
  $\tilde W_n > 0$ by \eqref{eq:an>=1/2} and
  $\psi_{\pm}\in L^{\infty}(\A_{R,n})$, we have $\psi_{\pm} \ge 0$ on
  $\A_{R,n}$, i.e.
  \begin{equation*}
    \abs{u_n} \le \epsilon \lambda_n^{1/(p-2)} \e^{R/2} \phi
    \quad \text{on } \A_{R,n}.
  \end{equation*}
  Since one can perform this argument on each connected component
  $\A_{R,n}$ of $A_{R,n}$, \eqref{eq:exp-bound-explicit} is a
  consequence of \ref{phi-upper-bd}.  \medbreak

  To conclude step~1, there only remains to
  construct the function $\phi : \A_{R,n} \to \IR$.  It is defined by
  the following rules:
  \begin{itemize}
  \item for each vertex $\vv$ of $\A_{R,n}$, set
    $\phi(\vv) \coloneq \exp\bigl( -\frac{1}{2} \lambda_n^{1/2}
    d_n(\vv) \bigr)$;
  \item on each bounded edge $e_n$ of $\A_{R,n}$ parametrized by
    $\intervalcc{0, \ell_{e_n}}$, $\phi$ is the function given by
    Lemma~\ref{lem:exp-bound} with $\ell = \ell_{e_n}$,
    $\phi_0 = \phi(0)$ and $\phi_1 = \phi(\ell_{e_n})$ if
    $\phi(0) \ge \phi(\ell_{e_n})$ or vice-versa if
    $\phi(0) < \phi(\ell_{e_n})$;
  \item for each unbounded edge $e_n$ of $\A_{R,n}$ identified with
    $\intervalco{0, \infty}$,
    \begin{equation}
      \label{eq:bound-half-line}
      \phi(x)
      \coloneq \exp\bigl(-\tfrac{1}{2} \lambda_n^{1/2} (d_n(0) + x) \bigr)
      = \exp\bigl(-\tfrac{1}{2} \lambda_n^{1/2} d_n(x) \bigr).
    \end{equation}
  \end{itemize}
  It is clear that $\phi$ is continuous and bounded on $\A_{R,n}$,
  $\phi\in L^2(\A_{R,n})$, and $\phi$ is of class $\C^\infty$ on each
  edge of $\A_{R,n}$.  At each vertex $\vv$ of an edge $e_n$, whether
  bounded or not, the above choices imply that
  $ \dd{\phi_{\edge_n}}{x}(\vv) \le 0$ and so
  property~\ref{neg-at-vertices} holds.
  Given that, for all
  $x \in \partial\A_{R,n} \subseteq \partial A_{R,n}$ we have
  $d_n(x) = R \lambda_n^{-1/2}$, the first rule implies
  property~\ref{phi-lower-bd}.

  Now, on a bounded edge $e_n$, $\phi'' = \alpha^2_{e_n} \phi$ with,
  for $n$ large enough,
  \begin{align*}
    \alpha_{e_n}
    &\coloneq \frac{1}{\ell_{e_n}} \cosh^{-1} \frac{\phi(\vv_0)}{\phi(\vv_1)}
      = \frac{1}{\ell_{e_n}} \cosh^{-1} \exp\bigl(\tfrac{1}{2} \lambda_n^{1/2}
    (d_n(\vv_1) - d_n(\vv_0)) \bigr)
    \notag \\
    & \le \frac{1}{\ell_{e_n}} \cosh^{-1} \exp\bigl(\tfrac{1}{2} \lambda_n^{1/2}
      \ell_{e_n} \bigr)
      \le 0.7\, \lambda_n^{1/2}.
  \end{align*}
  where the inequalities use the fact that
  $d_n(\vv_1) - d_n(\vv_0) \le \ell_{e_n}$, that  
  $\xi \mapsto \cosh^{-1}(\exp(\xi))$ is increasing, and finally
  the lower bound provided by~\eqref{eq:length-lower-bound}
  together with
  $\forall \xi \in \intervalco{2, +\infty},\
  \cosh^{-1}(\exp(\xi)) \le 1.4\, \xi$.
  This last inequality is easily shown as follows: for $\xi \ge 2$,
  \begin{equation*}
    \e^\xi
    \le \frac{\e^{0.8}}{2} \e^\xi
    \le \frac{\e^{0.4\xi}}{2} \e^\xi
    = \frac{\e^{1.4\xi}}{2}
    \le \cosh(1.4\xi),
  \end{equation*}
  since $\exp(0.8) > 1 + 0.8 + 0.8^2/2 = 2.12 > 2$.

  On an unbounded edge $e_n$, $\phi'' = \alpha^2_{e_n} \phi$ holds
  with $\alpha_{e_n} \coloneq \tfrac{1}{2} \lambda_n^{1/2}$.  Thus, on
  each bounded or unbounded edge $e_n$ of $\A_{R,n}$ and for all $n$
  large enough, one has, thanks to \eqref{eq:an>=1/2}, that
  \begin{equation*}
    L_n(\phi)
    = (\tilde W_n(x) - \alpha_{e_n}^2) \phi
    \ge \lambda_n \bigl(\tfrac{1}{2} - 0.7^2 \bigr) \phi
    \ge 0
  \end{equation*}
  and property~\ref{L>=0} is verified.  Finally,
  property~\ref{phi-upper-bd} is plainly
  verified at each vertex $\vv$ of $\A_{R,n}$. It also holds on each
  unbounded edge $e_n$ of $\A_{R,n}$ because
  of~\eqref{eq:bound-half-line}.
  On a bounded edge $e_n$ of $\A_{R,n}$ identified with
  $\intervalcc{0, \ell_{e_n}}$, in view of Lemma~\ref{lem:exp-bound},
  property~\ref{phi-upper-bd} is satisfied if, for all
  $x \in \intervalcc{0, \ell_{e_n}}$,
  \begin{equation*}
    \tfrac{1}{2} \lambda_n^{1/2} d_n(0) + \beta_{e_n} x
    \ge \tfrac{1}{2} \lambda_n^{1/2} d_n(x)  -\tfrac{R}{2}
    \qquad\text{where }
    \beta_{e_n} \coloneq \tfrac{1}{2} \lambda_n^{1/2}  \,
    \frac{d_n(\ell_{e_n}) - d_n(0)}{\ell_{e_n}}
  \end{equation*}
  or, equivalently, if
  \begin{equation}
    \label{eq:exp-bound-bounded-edge}
    \forall x \in \intervalcc{0, \ell_{e_n}}, \qquad
    d_n(0) + \frac{d_n(\ell_{e_n}) - d_n(0)}{\ell_{e_n}} x
    \ge d_n(x) - R \lambda_n^{-1/2} .
  \end{equation}
  Note that the inequality is plainly satisfied for $x = 0$ and
  $x = \ell_{e_n}$.  Thus if \eqref{eq:appropriate-local-coordinates}
  holds on the edge $e_n$, \eqref{eq:exp-bound-bounded-edge} is true
  as both hand sides are affine functions.
  When \eqref{eq:appropriate-local-coordinates} does not hold (because
  $e_n$ could not be split) there exists a
  $\tilde x_n \in (0, \ell_{e_n})$ such that the right hand side is
  affine both on $\intervalcc{0, \tilde x_n}$ and on
  $\intervalcc{\tilde x_n, \ell_{e_n}}$, and the length of one interval,
  say for instance $\intervalcc{\tilde x_n, \ell_{e_n}}$, is smaller
  than $4\lambda_n^{-1/2}$.  In this case,
  \eqref{eq:exp-bound-bounded-edge} is true if and only if it holds
  for $x = \tilde x_n$, which in turn is equivalent to
  \begin{equation}
    \label{eq:exp-bound-nonsplit-edge}
    \bigl(\ell_{e_n} - d_n(\ell_{e_n}) + d_n(0) \bigr)
    \frac{\tilde x_n}{\ell_{e_n}}
    \le R \lambda_n^{-1/2}.
  \end{equation}
  Recalling that the two expressions in the min
  of~\eqref{eq:varying distance} are equal at $\tilde x_n$,
  the fact that the length of the second interval is small means
  $\ell_{e_n} - \tilde x_n = \tfrac{1}{2} \bigl(\ell_{e_n} -
  d_n(\ell_{e_n}) + d_n(0) \bigr) < 4 \lambda_n^{-1/2}$.  Using this,
  $\tilde x_n \le \ell_{e_n}$ and $R \ge R_\epsilon \ge 8$, one easily
  deduces~\eqref{eq:exp-bound-nonsplit-edge}.

  \medbreak

  \noindent\textit{Step 2: Proof of \eqref{eq:exp-bound}.}
  \medskip

  To do so we will distinguish whether $x\in \G$ belongs to the set
  $A_{R,n}$ or to its complement.  We apply
  \eqref{eq:exp-bound-explicit} with $\epsilon = b^{- 1 /(p-2)}$ and
  $R \coloneq R_\epsilon$. Also, since the statement allows the use
  of subsequences, one can assume without loss of generality that
  $n_R = 0$. As such, we obtain \eqref{eq:exp-bound} for
  $x \in A_{R,n}$ if $C \ge \e^{R} b^{- 1 /(p-2)}$. We now turn to the
  case $x \notin A_{R,n}$.  On one hand, for any $x \in \G$ and any
  $n$, the definition of $x^1_n$ and
  \eqref{eq:equiv-epsilontilde-epsilon} of Lemma~\ref{concentration at
    a local maximum} yield
  \begin{equation*}
    \abs{u_n(x)}
    \le \smash{\max_{\G} \abs{u_n}}
    = \abs{u_n(x^1_n)}
    \le D \lambda_n^{1/(p-2)}
  \end{equation*}
  for some positive constant $D$.  On the other hand, for
  $x \notin A_{R,n}$, $d_n(x) < R \lambda_n^{-1/2}$  and so whenever
  $C \ge D \e^{R/2}$, we have
  \begin{equation*}
    |u_n(x)|
    \le D\lambda_n^{\frac{1}{p-2}}
    \le D \e^{R/2}\lambda_n^{\frac{1}{p-2}}
    \exp\bigl( -\tfrac{1}{2} \lambda_n^{1/2} d_n(x) \bigr)
    \le C\lambda_n^{\frac{1}{p-2}}
    \exp\bigl( -\tfrac{1}{2} \lambda_n^{1/2} d_n(x) \bigr),
    \quad x\notin A_{R,n}.
  \end{equation*}
  Thus \eqref{eq:exp-bound} holds if we choose
  $C \coloneq \max\{D, b^{-1/(p-2)} \e^{R/2}\} \e^{R/2}$.

  \bigbreak

  \noindent\textbf{Part 3: Proof of \eqref{eq:mass-convergence}.}
  \medskip

  Let $q \ge 1$.  We will establish the stronger property
  \begin{equation*}
    \lim_{n \to \infty} \lambda_n^{\frac{1}{2} - \frac{q}{p-2}}
    \int_\G \abs{u_n}^q \intd x
    = \sum_{i=1}^m (\tilde \lambda^i)^{\frac12-\frac{q}{p-2}}
      \int_{\G_{k^i}} \abs{\tilde u^i}^q \intd y.
  \end{equation*}

  Using~\eqref{eq:mass-concentration-Lq} or
  \eqref{eq:mass-concentration-Lq2} according to the case we are in,
  we easily establish that, for all $i$,
  \begin{equation*}
    \lim_{R \to \infty}\varlimsup_{n \to \infty} \,\,\,
    \biggabs{
      \lambda_n^{\frac{1}{2} - \frac{q}{p-2}}
      \int_{B(x^i_n, R\tilde\epsilon_n^i)} \abs{u_n}^q \intd x
      - (\tilde \lambda^i)^{\frac12-\frac{q}{p-2}}
      \int_{\G_{k^i}} \abs{\tilde u^i}^q \intd y
    } = 0.
  \end{equation*}
  Thus, it suffices to prove that
  \begin{equation*}
    \lim_{R \to \infty}  \varlimsup_{n \to \infty} \,\,\,
    \lambda_n^{\frac{1}{2} - \frac{q}{p-2}}
    \biggabs{
      \int_\G \abs{u_n}^q \intd x
      - \sum_{i=1}^m \int_{B(x^i_n, R\tilde\epsilon_n^i)} \abs{u_n}^q \intd x
    } = 0
  \end{equation*}
  or, equivalently, that
  \begin{multline}
    \label{eq:lim-masses}
    \forall \eta > 0,\
    \exists \tilde R_\eta,\ \forall R \ge \tilde R_\eta,\
    \exists \tilde n_R,\ \forall n \ge \tilde n_R,\\
    \lambda_n^{\frac{1}{2} - \frac{q}{p-2}} \biggabs{
      \int_\G \abs{u_n}^q \intd x
      - \sum_{i=1}^m \int_{B(x^i_n, R\lambda_n^{-1/2})} \abs{u_n}^q \intd x
    } \le \eta.
  \end{multline}
  Let $\eta > 0$ and take $\tilde R_\eta := 2R_\epsilon$
  where $R_\epsilon$ is obtained by~\eqref{eq:exp-bound-explicit}
  for a value $\epsilon$ to be chosen later.  Let $n_R$ be the value
  provided by~\eqref{eq:exp-bound-explicit} for a given $R \ge
  R_\epsilon$.  Let $R \ge \tilde R_\eta$.
  Take $\tilde n_R \ge n_{R/2}$ large enough so that,
  for all $n \ge \tilde n_R$,
  \begin{equation*}
    \forall i = 1,\dotsc, m,\quad
    R^i_n \ge R
  \end{equation*}
  (remembering that $R^i_n \to \infty$ as $n \to \infty$)
  and the balls $B(x^i_n, R\lambda_n^{-1/2})$ are disjoint
  for $i = 1,\dotsc,m$
  (this is possible thanks to~\eqref{eq:dist-points}). As a result,
  \begin{equation*}
    \int_\G \abs{u_n}^q \intd x
    - \sum_{i=1}^m \int_{B(x^i_n, R\lambda_n^{-1/2})} \abs{u_n}^q \intd x
    = \int_{A_{R,n}} \abs{u_n}^q \intd x.
  \end{equation*}
  Observe now that if $e_n$, identified with
  $\intervalcc{0, \ell_{e_n}}$ or $\intervalco{0, \infty}$,
  is an edge of $A_{R,n}$
  where \eqref{eq:appropriate-local-coordinates} holds then we have
  $d_n(x)\geq R\lambda_n^{-1/2}+x$.  Thus since
  $A_{R,n} \subseteq A_{R/2,n}$ and using
  \eqref{eq:exp-bound-explicit} with $R$ replaced by
  $R/2 \ge R_\epsilon$ (which holds because $n \ge n_{R/2}$),
  we obtain,
  \begin{align}
    \lambda_n^{\frac{1}{2} - \frac{q}{p-2}} \int_{e_n } \abs{u_n}^q \intd x
    & \le
      \lambda_n^{1/2} \, \varepsilon^q \, \e^{qR/2}
      \int_{e_n} \e^{-\frac{1}{2}q \, \lambda_n^{1/2} d_n(x)} \intd x
      \nonumber\\
    &\le \lambda_n^{1/2} \, \varepsilon^q \,  \e^{qR/2} \,
      \int_{0}^{\ell_{e_n}} \exp\Bigl(-\frac{q}{2} \,
      \lambda_n^{1/2} (R\lambda_n^{-1/2} + x) \Bigr) \intd x
      \nonumber\\
    &\le \lambda_n^{1/2} \, \varepsilon^q \, \e^{qR/2}\,
      \int_{0}^{+\infty}  \exp\Bigl(-\frac{q}{2} \,
      \lambda_n^{1/2}(R\lambda_n^{-1/2}+x) \Bigr) \intd x
      \nonumber\\
    &=  \frac{2}{q} \, \varepsilon^q.
      \label{eq:bound-Lq-edge}
  \end{align}
  If \eqref{eq:appropriate-local-coordinates} does not hold (because
  we cannot cut the edge),
  the interval splits into $\intervalcc{0, \tilde x_n}$ and
  $\intervalcc{\tilde x_n, \ell_{e_n}}$ on each of which
  \eqref{eq:appropriate-local-coordinates} holds
  (see the explanation before~\eqref{eq:appropriate-local-coordinates}).
  On each of these sub-intervals, the bound~\eqref{eq:bound-Lq-edge}
  holds and so it suffices to
  count twice this edge.  Finally, taking
  into account that the number of edges of $A_{R,n}$ is
  bounded independently of $R$ and $n$,
  we deduce that, for all $q \geq 1$,
  $$\lambda_n^{\frac{1}{2} - \frac{q}{p-2}} \int_{A_{R,n}} \abs{u_n}^q \intd x$$
  can be made arbitrarily small by taking $\varepsilon >0$ small
  enough. At this point, \eqref{eq:lim-masses} follows ending the
  proof of the theorem.
\end{proof}

\begin{remark}
  \label{Rem Strict}
  Let us observe that, contrary to what happens for positive radial
  solutions on annuli (see e.g.~\cite[Corollary 3.2]{Espetal}), here
  the number of selected blow-up points can be
  strictly less than the number of local maxima
  of the absolute value of solutions. The situation is more complex
  due to the fact that we have two possible limit profiles.  For
  example, consider the four-star graph $\G_4$ with four half-lines
  $e_1, \dotsc, e_4$ connected at their origin $0$.

  Consider also $\varphi$ the unique solution to
  \[
    \begin{cases}
      -\varphi''+\varphi=|\varphi|^{p-2}\varphi,
      \quad \text{on } \IR,
      \\
      \varphi\in H^1(\IR),
      \\
      \varphi>0 \text{ on }\IR, \qquad
      \varphi'(0)=0,
    \end{cases}
  \]
  namely,
  $\varphi(x) = (\frac{p}{2})^{1/(p-2)} \,
  \bigl(\cosh(\frac{p-2}{2}x)\bigr)^{-2/(p-2)}$ is the soliton
  with maximum in $0$.

  Observe that, defining $u_n$ on the edge $e_i$ by
  \[
    u^i_n(x)
    = \lambda_n^{1/(p-2)} \varphi\bigl(\lambda_n^{1/2}x+(-1)^i \bigr),
    \quad \text{for } x \ge 0 \text{ and } i=1,2,3,4,
  \]
  we obtain a family of solutions in $H^1(\G_4)$ of
  \begin{equation}
    \label{G4}
    \begin{cases}
      - u'' + \lambda_n  u =  \abs{ u}^{p-2}  u, &\text{in } \G_4,
      \\
      \displaystyle
      \sum_{i=1}^4 \dd{u_{e_i}}{x}(0)=0.
    \end{cases}
  \end{equation}
  Their maxima are located on the odd edges at coordinate
  $x_{n}=\lambda_n^{-1/2}$ and values
  $u_n(x_n)=(\frac{p\lambda_n}{2})^{\frac{1}{p-2}}$.  Hence,
  \begin{equation*}
    |u_n(x_n)|^{\frac{p-2}{2}} \dist(x_n, \V)
    = \sqrt{\frac{p}{2}}
  \end{equation*}
  and we are in the situation of Lemma \ref{concentration at a local
    maximum v2}. Considering the sequence of maximum points $x_{n}^1$
  in $e_1$, the limit problem satisfied by the renormalized
  sequence~\eqref{scale-epsilon-n2}
  in Lemma \ref{concentration at a local maximum v2} is given by
  \[
    \begin{cases}
      -\tilde u'' + \frac2p \tilde u
      = \abs{\tilde u}^{p-2} \tilde u, &\text{in } \G_4,
      \\
      \displaystyle
      \sum_{i=1}^k \dd{\tilde u_{\edge_i}}{x}(0)=0,
    \end{cases}
  \]
  with maximum on $e_1$ at coordinate $\hat x=
  \sqrt{\frac{p}{2}}$.  Actually, a direct computation shows
  \begin{equation*}
    \tilde u^{i}_n(x)
    = \tilde u^{i}(x)
    = \left(\frac2p\right)^{\frac{1}{p-2}}
    \varphi\biggr(\sqrt{\frac2p}x+(-1)^i\biggr),
    \quad \text{for } x \ge 0 \text{ and }  i=1,2,3,4.    
  \end{equation*}
  Observe that $\tilde u$ has two local maxima (on edges $1$ and $3$)
  but no local minima.  This does not happen for radial solutions on
  radially symmetric domains, see \cite[Section 3]{Espetal}.

  Observe also that
  \[
    \lim_{R \to +\infty} \varlimsup_{n \to \infty}
    \Bigl( \lambda_n^{-1/(p-2)} \max_{d_n(x) \ge R \lambda_n^{-1/2}}
    \abs{u_n(x)} \Bigr) = 0,
  \]
  where $d_n(x) \coloneq \dist(x, x^1_n)$. Hence the construction of
  the proof of Theorem \ref{main} stops with
  $m=1$ while the functions $u_n$ have all two local maxima. These two
  local maxima are no longer visible on the number $m$ of sequences
  considered but on the limit profile.

  Consider now the sequence $(u_n)\subset H^1(\G_4)$ of solutions
  to~\eqref{G4} given, on every edge $e_i$, by
  \[
    u^{i}_n(x)
    = \lambda_n^{1/(p-2)} \varphi\bigl(\lambda_n^{1/2}(x+(-1)^i)\bigr),
    \quad \text{for } x \ge 0 \text{ and } i=1,2,3,4.
  \]
  Their maxima are on the odd edges at coordinates $x_n=1$ and values
  $u_n(x_n)=(\frac{p\lambda_n}{2})^{\frac{1}{p-2}}$.  Hence, in that
  case
  $|u_n(x_n)|^{\frac{p-2}{2}} \dist(x_n, \V)
  = \sqrt{\frac{p\lambda_n}{2}}\to\infty$ and we are in the situation
  of Lemma \ref{concentration at a local maximum}. Considering the
  sequence of maximum points $x_{n}^1$ in $e_1$, the limit solution
  of the renormalized sequence~\eqref{scale-epsilon-n}
  in Lemma \ref{concentration at a local
    maximum} is given by
  \[
    \tilde u(x)
    = \left(\frac2p\right)^{\frac{1}{p-2}}
    \varphi\biggl(\sqrt{\frac2p}x\biggr),
    \quad \text{ for } x\in \IR.
  \]
  Moreover, in this case
  \[
    \lim_{R \to +\infty} \varlimsup_{n \to \infty}
    \Bigl( \lambda_n^{-1/(p-2)} \max_{d_n(x) \ge R \lambda_n^{-1/2}}
    \abs{u_n(x)} \Bigr)
    \ge \left(\frac{p}{2} \right)^{\frac{1}{p-2}},
  \]
  where $d_n(x) \coloneq \dist(x, x^1_n)$. Hence the construction of
  the proof of Theorem \ref{main} goes on until $m=2$ and the result
  ``counts'' the number of local maxima of $u_n$, as for domains.
\end{remark}

\begin{proof}[Proof of Corollary \ref{info-norms}]
  Take $q=2$ in \eqref{eq:mass-convergence}. Assume by contradiction
  that $\displaystyle\varlimsup_{n \to \infty} \lambda_n=\infty$. Then
  we have
  \begin{equation*}
    \displaystyle    \varlimsup_{n \to \infty}
    \lambda_n^{\frac{1}{2} - \frac{q}{p-2}}
    =
    \begin{cases}
      0, & \text{if } p \in \intervaloo{2, 6}, \\
      +\infty,& \text{if } p>6.      
    \end{cases}
  \end{equation*}
  This is a contradiction with \eqref{eq:mass-convergence}.
\end{proof}

\section{$L^{\infty}$ and $L^2$ bounds of solutions
  with bounded Morse index}\label{sec:A_Priori}

In this section, we consider pairs $(\lambda,u)\in \IR\times H^1(\G)$
solutions to
\begin{equation}
  \label{eq:edo_fixed}
  \begin{cases}
    -u'' + W(x) u + \lambda u = \rho(x) \abs{u}^{p-2} u
    &\text{on every edge } e \in \mathcal{E},\\
    u \text{ is continuous on } \G,\\[1\jot]
    \displaystyle
    \sum_{\edge \incident \vv} \dd{u_{\edge}}{x}(\vv)=0
    &\text{at every vertex } \vv \in \V.
  \end{cases}
\end{equation}
on a given graph $\G \in \GG$.
We assume the counterpart of condition~\ref{H} for this problem,
namely
\begin{equation}
  \label{hyp_W_rho}
  \tag{$\mathrm{H}_{W, \rho}$}
  W \in L^\infty(\G); \quad
  \rho \in L^\infty(\G) \setminus \{ 0 \}; \quad
  \exists a >0,\ \forall e \in \mathcal{E}, \text{ either
    $\rho\geq a$ on $e$ or $\rho\equiv 0$ on $e$.}
\end{equation}

\subsection{Link between the number of nodal zones
and the Morse index}

The following proposition is an adaptation to our setting of a classic
result, see e.g.\,\cite[Proposition 1]{BaLi} for a similar statement
in the context of domains in $\IR^N$.

\begin{proposition}
  \label{Morse_nodal_zones}
  Let $\G \in \GG$,
  $p > 2$, $W$ and $\rho$ be functions satisfying
  \eqref{hyp_W_rho}.  Let $(\lambda, u) \in \IR \times H^1(\G)$ be a
  solution to~\eqref{eq:edo_fixed}.  Then, $\morse(u)$, the Morse index of
  $u$, is greater or equal to the number of nodal zones\footnote{By
    definition, a nodal zone of $u$ is a connected component of
    $\{ x \in \G \mid u(x) \ne 0\}$.}  of $u$ that are not included in
  the set $\G_0 \coloneq \{ x \in \G \mid \rho(x) = 0\}$.
\end{proposition}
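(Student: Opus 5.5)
The approach is the classical one of Bahri--Lions: restrict $u$ to each nodal zone, use these restrictions as test functions, and show that they span a space on which $Q_u$ is negative definite. Let $\Omega_1,\dots,\Omega_k$ be distinct nodal zones of $u$ not included in $\G_0$. If there are infinitely many, we run the argument for every finite $k$ and obtain $\morse(u)=+\infty$. For each $j$, set $w_j \coloneq u\,\chi_{\Omega_j}$. Since $u$ is continuous and vanishes on $\partial\Omega_j$, we have $w_j\in H^1(\G)$ with $w_j'=u'\chi_{\Omega_j}$.

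\textbf{Step 1: energy identity on a nodal zone.} Testing \eqref{eq:edo_fixed} with $w_j$ gives
\[
\int_\G |w_j'|^2+(W+\lambda)w_j^2 \intd x=\int_\G\rho|u|^{p-2}w_j^2 \intd x .
\]
The test function is not compactly supported, so we argue as in Lemma~\ref{Morse index>=1}: we truncate $w_j$ by cut-offs and pass to the limit, which is legitimate because $u\in H^1(\G)$. The boundary terms vanish since $u=0$ on $\partial\Omega_j$, and the Kirchhoff conditions handle the vertices inside $\Omega_j$. Consequently
\[
Q_u(w_j;\G)=-(p-2)\int_{\Omega_j}\rho|u|^{p} \intd x .
\]

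\textbf{Step 2: strict negativity.} This is the step where the hypothesis $\Omega_j\not\subseteq\G_0$ is used, and I expect it to be the main obstacle. We must show $\int_{\Omega_j}\rho|u|^p>0$. Since $\Omega_j\not\subseteq\G_0$ and $\Omega_j$ is a union of open pieces of edges, $\Omega_j$ meets some edge $e$ on which $\rho\ge a$ in an open set where $u\neq0$. This gives $\int_{\Omega_j}\rho|u|^p\ge a\int_{\Omega_j\cap e}|u|^p>0$.

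A subtlety arises because $\rho$ is defined only almost everywhere. If $\Omega_j$ only touches $\G_0$ through a vertex, we use the convention that $\G_0$ is the union of edges where $\rho\equiv0$; the closed-edge versus open-set issue should be harmless. The care mentioned in the text presumably concerns zones lying entirely in edges with $\rho\equiv0$. On such a zone $Q_u(w_j)$ would be $0$, or the zone might not even exist when $W+\lambda>0$, and this is exactly why such zones are excluded.

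\textbf{Step 3: negative definite span and density.} The $w_j$ have pairwise disjoint supports (up to null sets), so they are linearly independent and $Q_u$ is diagonal on their span:
\[
Q_u\Bigl(\sum_j c_jw_j;\G\Bigr)=\sum_j c_j^2\,Q_u(w_j;\G)<0 \quad\text{for }c\neq0 .
\]
It remains to pass to compactly supported functions. By the density of $H^1(\G)\cap\Cc(\G)$ in $H^1(\G)$ and the continuity of $Q_u$ on $H^1(\G)$ ($W$, $\rho$ and $u$ are bounded), we choose $\phi_j\in H^1(\G)\cap\Cc(\G)$ close to $w_j$. Negative definiteness on a finite-dimensional space is an open condition: it amounts to the compactness of the unit sphere of the coefficient vectors $c$ together with the continuity of the Gram-type matrix $\bigl(Q_u(\phi_i,\phi_j)\bigr)$. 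So for good enough approximations, $\operatorname{span}\{\phi_1,\dots,\phi_k\}$ is $k$-dimensional and $Q_u<0$ on it minus $\{0\}$. Hence $\morse(u)\ge k$, which is the claim.
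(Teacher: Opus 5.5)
Your proposal is correct and follows essentially the paper's argument. You restrict $u$ to each nodal zone, obtain $Q_u(w_j;\G)=-(p-2)\int_{\Omega_j}\rho|u|^p<0$, and conclude via disjoint supports and density of $H^1(\G)\cap\Cc(\G)$. The only difference is in the final approximation step: the paper multiplies all restrictions by one common cut-off $\chi$, so supports stay disjoint and $Q_u$ stays diagonal, whereas you approximate each $w_j$ separately and use that negative definiteness of the finite Gram matrix is an open condition. Both work, and your version also handles infinitely many zones cleanly by working with arbitrary finite subfamilies.
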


\begin{proof}
  Let $\C_1, \dotsc, \C_k$ be the $k$ nodal zones of $u$ which are not
  included in $\G_0$ (with possibly $k=\infty$) and let
  $\varphi_1, \dotsc, \varphi_k$ be the restrictions of $u$ to
  $\C_1, \dotsc, \C_k$.  For a given $1 \le i \le k$, since
  $(\lambda,\varphi_i)$ is a solution to~\eqref{eq:edo_fixed}
  on $\C_i$ with either Kirchhoff or Dirichlet
  vertex conditions (at the boundary of $\C_i$), one has
  \begin{align*}
    Q_u(\varphi_i;\G)
    &\coloneq \int_\G \Big(\abs{\varphi_i'}^2
    + (W(x) + \lambda)\abs{\varphi_i}^2
    - (p-1)\rho(x) |u(x)|^{p-2}|\varphi_i|^2\Big) \intd x\\
    &= - (p-2) \int_{\C_i} \rho(x) |u(x)|^{p} \intd x,
  \end{align*}
  which is negative since $\C_i$ is not a subset of $\G_0$.

  Now, we consider a sequence of cut-off functions
  $(\chi_n)_{n \ge 1} \subseteq H^1(\G) \cap \Cc(\G)$ such that
  $\chi_n v \xrightarrow{H^1} v$ as $n \to \infty$ for all $v \in H^1(\G)$
  (see e.g. \cite[Remark 3.2]{DeDoGaSeTr} for details about the
  existence of such a sequence of cut-off functions for metric graphs
  in $\G$).  Taking $n$ large enough, we deduce that there exists a
  suitable cut-off function $\chi \in H^1(\G) \cap \Cc(\G)$ so that
  $\tilde{\varphi}_i \coloneq \chi \varphi_i$ satisfies
  $Q_u(\tilde{\varphi}_i;\G) < 0$ for all $1 \le i \le k$.  Then,
  since the interiors of the supports of
  $\tilde{\varphi}_1, \dotsc, \tilde{\varphi}_k$ are disjoint,
  $Q_u(\cdot;\G)$ is negative definite on the linear span of the
  $\tilde{\varphi}_i$, $1 \le i \le k$, which has dimension $k$.
\end{proof}

\begin{remark}
  In Proposition \ref{Morse_nodal_zones}, the Morse index of $u$ may
  be equal to the number of nodal zones of $u$ as for example if $u$
  is an action ground state or a nodal action ground state (see
  e.g.\,\cite[Theorem 3.7]{DaPa}).  However,
  strict inequality may also hold.  Indeed, on
  compact metric graphs, taking $W \equiv 0$ and $\rho \equiv 1$,
  given $\lambda \ge 0$ the equation
  \eqref{eq:edo_fixed} admits the constant solution
  $c \coloneq \lambda^{\frac{1}{p-2}}$.  Its
  associated quadratic form is given by
  \begin{equation*}
    Q_c(\varphi;\G)
    = \int_\G \Big(\abs{\varphi'}^2
    + \lambda \abs{\varphi}^2
    - (p-1) \lambda |\varphi|^2\Big) \intd x
    = \int_\G \abs{\varphi'}^2 \intd x
    - (p-2) \lambda \int_\G |\varphi|^2 \intd x.
  \end{equation*}
  By taking $\lambda$ large enough, the Morse index of the constant
  solution can thus be made arbitrarily large, while it has only one
  nodal zone.
\end{remark}

The following example shows that the number of nodal
zones may be greater than the Morse index, if we
also count those included in $\G_0$.
We will come back to this example several times
in this section.

\begin{example}[``Purely linear solution on the 3-bridge'']
  \label{three_bridge}
  Consider the 3-bridge metric graph $\G_{\text{b}}$ made of two vertices $v_L$
  and $v_R$ joined by $3$ edges $e_1$, $e_2$, $e_3$ of length $1$
  (see Figure~\ref{fig:3-bridge}).
  \begin{figure}[ht]
    \centering
    \begin{tikzpicture}
      \node at (-2,0) [nodo] (L) {};
      \node at (2,0) [nodo] (R) {};
      \draw (0, 0) ellipse (2 and 0.6);
      \draw (L) -- (R);
      \node at (-2.3, -0.03) {$v_L$};
      \node at (2.3, -0.03) {$v_R$};
      \node at (0, 0.8) {$e_1$};
      \node at (0, 0.2) {$e_2$};
      \node at (0, -0.4) {$e_3$};
    \end{tikzpicture}
    \caption{The triple-bridge $\G_{\text{b}}$}
    \label{fig:3-bridge}
  \end{figure}

  \noindent
  Consider
  the eigenvalue problem on $\G_{\text{b}}$:
  \begin{equation*}
    \begin{cases}
      -u'' = \mu^2 u
      &\text{on every } e \in \{e_1, e_2, e_3\},\\
      u \text{ is continuous on } \G,\\[1\jot]
      \displaystyle
      \sum_{e \incident \vv} \dd{u_{e}}{x}(\vv) = 0
      & \text{for all } \vv \in \{v_L,v_R\}.
    \end{cases}
  \end{equation*}
  We use the notation $\lambda=\mu^2$ when
  referring to eigenvalues.  The first
  eigenvalue is $\lambda_1 = 0$ and its eigenspace is spanned by
  constant functions.  From now on, we thus consider $\mu > 0$.
  Denoting by $u_i$, $i=1,2,3$, the restriction
  of $u$ to $e_i$, which we identify to a function from
  $\intervalcc{0, 1}$ to $\IR$, we obtain
  \begin{equation*}
    u_i(x) = a_i \cos(\mu x) + b_i \sin(\mu x)
  \end{equation*}
  for some coefficients $a_i$ and $b_i$.  The continuity and Kirchhoff
  conditions in $v_L$ and $v_R$ read
  \begin{equation*}
    \begin{cases}
      a_1 = a_2 = a_3\\
      a_1 \cos(\mu) + b_1 \sin(\mu)
      = a_2 \cos(\mu) +b_2 \sin(\mu)
      = a_3 \cos(\mu) +b_3 \sin(\mu)\\
      \mu(b_1 + b_2 + b_3) = 0\\
      \mu\bigl(-(a_1 + a_2 + a_3)\sin(\mu)
      + (b_1 + b_2 + b_3)\cos(\mu) \bigr) = 0.
    \end{cases}
  \end{equation*}
  Simple computations imply that the system above has nonzero
  solutions if and only if $\sin(\mu) = 0$. For $\mu = k\pi$ with $k$
  a positive integer, we obtain that
  $\lambda_{3k-1} = \lambda_{3k} = \lambda_{3k+1} = (k\pi)^2$. The
  cor\-res\-pon\-ding eigenspace has dimension 3 and is given by
  \begin{equation*}
    \bigl\{ \varphi^k_{(a, b_1, b_2, b_3)} \bigm|
    (a, b_1, b_2, b_3) \in \IR^4,\
    b_1 + b_2 + b_3 = 0 \bigr\},
  \end{equation*}
  where the restriction of $\varphi^k_{(a, b_1, b_2, b_3)}$ to $e_i$
  is given by $x \mapsto a\cos(k\pi x) + b_i\sin(k\pi x)$.

  Let us consider $\varphi^k \coloneq \varphi^k_{(0, 1, -1, 0)}$.
  Then, $\varphi^k$ is an eigenfunction associated to $\lambda_{3k-1}$
  which vanishes identically on $e_3$.  We take
  $\rho \in L^\infty(\G_{\text{b}})$ defined by
  \begin{equation}
    \rho(x)
    \coloneq \begin{cases}
      0   &\text{if } x \in e_1 \cup e_2,\\
      1   &\text{if } x \in e_3.
    \end{cases}
    \label{three_bridge_rho}
  \end{equation}
  Then, the couple $(-\lambda_{3k-1},\varphi^k)$
  solves \eqref{eq:edo_fixed} with $W \equiv 0$ and $\rho$ as in
  \eqref{three_bridge_rho} since $\rho \cdot \varphi^k \equiv 0$.
  The quadratic form associated to $\varphi^k$ as a solution to
  \eqref{eq:edo_fixed} is given by (using again that
  $\rho \cdot \varphi^k \equiv 0$)
  \begin{equation*}
    Q_{\varphi^k}(\psi;\G_{\text{b}})
    \coloneq \int_{\G_{\text{b}}} |\psi'|^2 \intd x
    - \lambda_{3k-1}\int_{\G_{\text{b}}} |\psi|^2 \intd x,
  \end{equation*}
  so that the Morse index of $\varphi^k$ is $3k-2$.  Moreover, the
  number of nodal zones of $\varphi^k$ is $2k$.

  Therefore, when $k = 1$, we obtain a solution with $2$ nodal zones
  and having Morse index $1$.  This does not contradict
  Proposition~\ref{Morse_nodal_zones} since both those zones are
  included in $\G_0$.
\end{example}
\bigbreak

The following seemingly standard proposition
holds in our setting but its proof
requires some care since $\rho$ may vanish.

\begin{proposition}
  \label{Morse_at_least_one}
  Let $\G \in \GG$, $p > 2$, $W$ and $\rho$ be
  functions satisfying \eqref{hyp_W_rho}.  Let
  $(\lambda, u) \in \IR \times (H^1(\G) \setminus \{ 0 \})$ be a
  solution to~\eqref{eq:edo_fixed}.  Then,
  $\morse(u) \ge 1$.
\end{proposition}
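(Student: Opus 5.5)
The plan is to split into two cases according to whether $u$ "sees" the nonlinearity. Throughout, in view of \eqref{hyp_W_rho}, we identify $\G_0$ with the union of the (closed) edges on which $\rho\equiv 0$.

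\emph{Case 1: some nodal zone of $u$ is not included in $\G_0$.} Then Proposition~\ref{Morse_nodal_zones} immediately gives $\morse(u)\ge 1$.

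\emph{Case 2: every nodal zone of $u$ is included in $\G_0$.} Then $u\equiv 0$ on every edge where $\rho\ge a$. Since $\rho\not\equiv 0$, there is at least one such edge $e^*$. Moreover $\rho\abs{u}^{p-2}u\equiv 0$ and $\rho\abs{u}^{p-2}\equiv 0$ on $\G$. Hence $u$ is a nontrivial $H^1$ solution of the \emph{linear} Kirchhoff problem $-u''+(W+\lambda)u=0$, and
\begin{equation*}
  Q_u(\varphi;\G)=\int_\G \bigl(\abs{\varphi'}^2+(W+\lambda)\varphi^2\bigr)\intd x .
\end{equation*}
The form is defined on $H^1\cap\Cc$ but extends continuously to $H^1(\G)$. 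By density of $H^1(\G)\cap\Cc(\G)$ in $H^1(\G)$, it suffices to find some $\psi\in H^1(\G)$ with $Q_u(\psi;\G)<0$. Argue by contradiction and assume $Q_u\ge 0$ on $H^1(\G)$.

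Testing the equation with (truncations of) $u$, exactly as in Lemma~\ref{Morse index>=1}, gives $Q_u(u;\G)=0$. Set $v\coloneq\abs{u}\in H^1(\G)$. Since $\abs{v'}=\abs{u'}$ a.e., we get $Q_u(v;\G)=0$, so $v$ minimizes the nonnegative quadratic form $Q_u$. Expanding $Q_u(v+t\varphi;\G)\ge 0$ and differentiating at $t=0$ shows that the associated bilinear form vanishes on $(v,\varphi)$ for all $\varphi\in H^1(\G)\cap\Cc(\G)$. Thus $v\ge0$ is itself a weak solution of $-v''+(W+\lambda)v=0$ with Kirchhoff conditions, hence $v\in W^{2,\infty}_\loc$ on each edge.

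Now $v\equiv 0$ on $e^*$, and I propagate this through the graph. Let $\vv$ be an endpoint of $e^*$. Since $v\ge 0$ and $v(\vv)=0$, every outgoing derivative $\dd*{v_\edge}{x}(\vv)$ is $\ge 0$. The Kirchhoff condition forces them all to vanish. On each edge incident to $\vv$, $v$ therefore has zero Cauchy data at $\vv$, so by uniqueness for the linear ODE (with $L^\infty$ coefficient) $v\equiv0$ on that edge. Iterating along paths, which is legitimate since $\G$ is connected and each vertex has finite degree, gives $v\equiv 0$ on $\G$, i.e.\ $u\equiv0$, a contradiction. Note that I use ODE uniqueness rather than Proposition~\ref{strong_maximum_principle}, because $W+\lambda$ need not be nonnegative.

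I expect the main obstacle to be this second case. Concretely, one must check carefully that $Q_u(\abs{u};\G)=Q_u(u;\G)=0$ on a possibly noncompact graph in $\GG$ (the truncation and density argument), and that the minimizer property gives the weak equation for $\abs{u}$ with the Kirchhoff condition, so that the vertex propagation step is justified.
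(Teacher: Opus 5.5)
Your proof is correct and follows essentially the paper's route. You reduce via Proposition~\ref{Morse_nodal_zones} to the case $\rho u\equiv 0$, then show that $Q_u\ge 0$ would make $u$ a minimizer of the Rayleigh quotient of $-v''+(W+\lambda)v$, which is incompatible with $u$ vanishing on an edge where $\rho\ge a$. The only difference is that you unpack the paper's appeal to ``a first eigenfunction cannot change sign'' through the nonnegative minimizer $\abs{u}$ and ODE uniqueness at the vertices; this also sidesteps the paper's use of Proposition~\ref{strong_maximum_principle} with a potential $W+\lambda$ that need not be nonnegative.
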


\begin{remark}
  We have already encountered a similar statement in Lemma~\ref{Morse
    index>=1}, where the assumption on $\rho$ of \eqref{hyp_W_rho} was
  replaced by an assumption on $W+\lambda$.
\end{remark}

\begin{proof}
  Let $(\lambda, u) \in \IR \times (H^1(\G) \setminus \{ 0 \})$
  be a solution to~\eqref{eq:edo_fixed}.
  According to Proposition~\ref{Morse_nodal_zones}, the claim is true
  if $\rho \cdot u$ does not vanish identically.  Thus, let us now
  assume that $\rho \cdot u \equiv 0$.

  Since $\rho$ does not vanish identically, $u$ must change sign,
  since otherwise (up to replacing $u$ by $-u$)
  one would have $u \ge 0$, hence $u > 0$ on $\G$ by the strong
  maximum principle (Proposition \ref{strong_maximum_principle}),
  contradicting the fact that $u \cdot \rho \equiv 0$.

  Since $u \cdot \rho \equiv 0$, $u$ solves
  \begin{equation}
    \label{u_eigv}
    \begin{cases}
      -u'' + W(x) u + \lambda u = 0
      &\text{on } \G,\\[1\jot]
      \displaystyle
      \sum_{e \incident \vv} u'_{e}(\vv) = 0
      & \text{for all } \vv \in \V,
    \end{cases}
  \end{equation}
  and is thus an eigenfunction of $v \mapsto -v'' + W(x)v$ with
  eigenvalue $-\lambda$.

  The quadratic form associated to $u$ is
  \begin{equation*}
    Q_u(\varphi;\G)
    \coloneq \int_\G \abs{\varphi'}^2
    + \bigl(W(x) + \lambda\bigr)\abs{\varphi}^2
    \intd x.
  \end{equation*}
  It remains to show that one does not have
  $Q_u(\varphi;\G) \ge 0$ for all
  $\varphi \in H^1(\G) \cap \Cc(\G)$.
  If this was the case, it would mean that
  \begin{equation*}
    \inf_{\varphi \in H^1(\G) \setminus \{ 0 \}}
    \frac{
      \int_\G \abs{\varphi'}^2 + W(x)\abs{\varphi}^2 \intd x}{
      \int_\G \abs{\varphi}^2 \intd x}
    \ge -\lambda.
  \end{equation*}
  Since $u$ solves \eqref{u_eigv} and thus
  satisfies $Q_u(u;\G) = 0$, we deduce that $u$ would be a minimizer
  of the Rayleigh quotient and so a first eigenfunction. This
  contradicts the fact that $u$ changes sign, ending the proof.
\end{proof}

\subsection{Lower bounds on $\lambda$ for a given Morse index}

In this subsection, we are interested in
\begin{equation}
  \label{def_Lambda}
  \Lambda_{m^*}
  \coloneq \inf \bigl\{ \lambda \in \IR \bigm|
  \exists u \in H^1(\G) \setminus \{ 0 \},\
  (\lambda, u) \text{ solves } \eqref{eq:edo_fixed},
  \morse(u) \le m^* \bigr\}
\end{equation}
for a given positive integer $m^*$, taking
$\Lambda_{m^*} \coloneq +\infty$ if \eqref{eq:edo_fixed} has only the
trivial solution for all $\lambda\in\mathbb R$.

\subsubsection{Compact case}
\begin{proposition}
  \label{lower_bound_lambda_compact}
  Let $\G$ be a compact metric graph, $p > 2$, $m^*$ be a positive
  integer, and $W$, $\rho$ be functions satisfying
  \eqref{hyp_W_rho}.  Then, one has
  \begin{equation}
    \label{lower_bound_Lambda}
    \Lambda_{m^*} \ge -\lambda_{m^*+1}\bigl(-v'' + W(x)v\bigr)
  \end{equation}
  where $\lambda_{m+1}\bigl(-v'' + W(x)v\bigr)$ denotes
  the $(m+1)$-th eigenvalue of
  the operator $v \mapsto -v'' + W(x)v$ on the graph $\G$ with
  Kirchhoff conditions.  Moreover, if $m^* = 1$ and
  $\inf_{\G} \rho > 0$, one has
  \begin{equation}
    \label{Lambda_1}
    \Lambda_{1} = -\lambda_{1}\bigl(-v'' + W(x)v\bigr)
  \end{equation}
  and the infimum defining $\Lambda_{1}$ in \eqref{def_Lambda} is not
  attained.
\end{proposition}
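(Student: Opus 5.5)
The plan for \eqref{lower_bound_Lambda} is to compare $Q_u$ with the linear quadratic form. Since $\G$ is compact, every function in $H^1(\G)$ has compact support, so $H^1(\G)\cap\Cc(\G)=H^1(\G)$ and the Kirchhoff eigenfunctions of $v\mapsto -v''+W(x)v$ are admissible test functions. Let $(\lambda,u)$ solve \eqref{eq:edo_fixed} with $u\ne0$ and $\morse(u)\le m^*$, and suppose by contradiction that $\lambda<-\lambda_{m^*+1}$. Let $X$ be the span of the first $m^*+1$ eigenfunctions; it has dimension $m^*+1$. For $\phi\in X\setminus\{0\}$, using $\rho\ge0$ we get
\begin{equation*}
Q_u(\phi;\G)\le \int_\G |\phi'|^2+W\phi^2\intd x+\lambda\int_\G\phi^2\intd x
\le (\lambda_{m^*+1}+\lambda)\int_\G\phi^2\intd x<0 .
\end{equation*}
This gives $\morse(u)\ge m^*+1$, a contradiction. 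Hence every admissible $\lambda$ satisfies $\lambda\ge-\lambda_{m^*+1}$, which is \eqref{lower_bound_Lambda}.

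For $m^*=1$ with $\inf_\G\rho>0$, I first show the strict inequality $\lambda>-\lambda_1$ for every solution with $\morse(u)\le1$; this gives both the bound $\Lambda_1\ge-\lambda_1$ and the non-attainment. Let $\phi_1>0$ be the first eigenfunction. It is positive by Proposition~\ref{strong_maximum_principle} after the usual shift of $W$, and it is simple. Testing \eqref{eq:edo_fixed} with $\phi_1$ and using the eigenvalue equation gives
\begin{equation*}
(\lambda_1+\lambda)\int_\G u\phi_1\intd x=\int_\G \rho\abs{u}^{p-2}u\,\phi_1\intd x .
\end{equation*}
Suppose $\lambda\le-\lambda_1$. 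If $u$ does not change sign, say $u\ge0$, $u\not\equiv0$, then the left-hand side is $\le0$ while the right-hand side is $>0$, since $\rho\ge a>0$ everywhere. Hence $u$ changes sign and has at least two nodal zones. Since $\G_0=\emptyset$, neither zone lies in $\G_0$, and Proposition~\ref{Morse_nodal_zones} gives $\morse(u)\ge2$, a contradiction. Thus $\lambda>-\lambda_1$.

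For equality in \eqref{Lambda_1}, I need, for every $\lambda>-\lambda_1$, a nontrivial solution with Morse index at most $1$. For such $\lambda$ the form $J(v)\coloneq\int_\G|v'|^2+(W+\lambda)v^2$ is coercive on $H^1(\G)$. I minimize $J$ on $M\coloneq\{v\in H^1(\G)\mid\int_\G\rho|v|^p=1\}$. The embedding $H^1(\G)\hookrightarrow L^p(\G)$ is compact on a compact graph, so a minimizer $v$ exists, and $\abs{v}$ is also a minimizer. The Lagrange multiplier rule gives $-v''+(W+\lambda)v=\mu\rho|v|^{p-2}v$ with $\mu=J(v)>0$ and Kirchhoff conditions, so $u\coloneq\mu^{1/(p-2)}v$ solves \eqref{eq:edo_fixed}.

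The step I expect to be the main obstacle is checking $\morse(u)\le1$, which requires translating the constrained second-order condition into $Q_u$. At a constrained minimum, the second variation of $J-\mu\int\rho|v|^p\cdot\frac{2}{p}$ is nonnegative on the tangent space $T=\{\phi\mid\int_\G\rho|v|^{p-2}v\phi=0\}$. That second variation is a positive multiple of $Q_u(\phi;\G)$, so $Q_u\ge0$ on the codimension-one subspace $T$. Any subspace on which $Q_u<0$ meets $T$ only in $0$, hence has dimension at most $1$, and $\morse(u)\le1$. Consequently $\Lambda_1\le\lambda$ for every $\lambda>-\lambda_1$, so $\Lambda_1=-\lambda_1$, while the previous paragraph shows that the infimum is not attained.
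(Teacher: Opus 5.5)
Your proof is correct and follows the paper's structure. The span of the first $m^*+1$ Kirchhoff eigenfunctions gives \eqref{lower_bound_Lambda}. For $m^*=1$, the nodal-zone count (Proposition~\ref{Morse_nodal_zones}) together with testing against the positive first eigenfunction rules out $\lambda\le-\lambda_1$, which gives both $\Lambda_1\ge-\lambda_1$ and non-attainment. You also handle $\morse(u)\le 1$ directly, not only Morse index exactly one.

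The only real difference is in proving $\Lambda_1\le-\lambda_1$. The paper just cites Nehari-manifold ground states and \cite[Theorem 3.7]{DaPa}. You instead minimize $J$ on $\{\int_\G\rho|v|^p=1\}$, which is equivalent by homogeneity, and read off $\morse(u)\le1$ from the second-order condition on the codimension-one tangent space. This makes the argument self-contained.
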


\begin{proof}
    Let $(\lambda, u)
    \in \IR \times (H^1(\G) \setminus \{ 0 \})$
    be a nontrivial solution to~\eqref{eq:edo_fixed}
    with $\morse(u) \le m^*$.

    Let $\lambda_1 \le \lambda_2 \le \cdots \le \lambda_{m^*+1}$ be the $m^*+1$
    smallest eigenvalues of $v \mapsto -v'' + W(x)v$
    counted with multiplicity
    and $\varphi_1, \varphi_2, \dotsc, \varphi_{m^*+1}$
    denote the corresponding eigenfunctions,
    which we assume to be $L^2$-orthogonal.
    Given $t_1, t_2, \dotsc, t_{m^*+1} \in \IR$, let us define
    \begin{equation*}
        \varphi \coloneq \sum_{1 \le i \le m^*+1} t_i \varphi_i.
    \end{equation*}
    Assume by contradiction that
    $\lambda < -\lambda_{m^*+1}\bigl(-v'' + W(x)v\bigr)$. Then,
    \begin{align*}
        \int_\G \abs{\phi'}^2
        + \bigl( W(x) + \lambda \bigr) \phi^2
        - (p-1) \rho(x) |u|^{p-2} \phi^2(x) \intd x
        &\le \sum_{1 \le i \le m^*+1} t_i^2
        (\lambda + \lambda_i) \|\phi_i\|_{L^2(\G)}^2\\
        &\le (\lambda + \lambda_{m^*+1})
        \sum_{1 \le i \le m^*+1} t_i^2 \|\phi_i\|_{L^2(\G)}^2,
    \end{align*}
    is negative if at least one of the $t_i$ is nonzero,
    proving that $m^* + 1 \le \morse(u)$, contradicting
    the assumption $\morse(u) \le m^*$.

    Let us now prove \eqref{Lambda_1}.
    For $\lambda > -\lambda_1\bigl(-v'' + W(x)v\bigr)$,
    since $\rho(x) \ge a$ for almost every $x \in \G$,
    it is standard to show that solutions
    to \eqref{eq:edo_fixed} exist, for instance
    obtained by minimizing the action functional
    on the Nehari manifold associated to the problem
    (see e.g.\,\cite{DeDoGaSeTr} for such existence
    results on metric graphs).  Moreover, it is well known that these action ground states
    have Morse index $1$ (see e.g. \cite[Theorem 3.7]{DaPa}).

    Now, let us show that
    for $\lambda \le -\lambda_1\bigl(-v'' + W(x)v\bigr)$,
    problem \eqref{eq:edo_fixed} has no solution
    with Morse index one.
    First, we remark that such a solution would
    have a constant sign, since otherwise it would have
    two nodal zones (at least one for $u(x) > 0$
    and one for $u(x) < 0$), which is impossible
    using Proposition~\ref{Morse_nodal_zones}
    since $\rho \ge a$ almost everywhere.
    Up to replacing $u$
    by $-u$, we may assume $u \ge 0$ on $\G$.
    Thus, $u > 0$ using the strong maximum principle
    (Proposition \ref{strong_maximum_principle}).
    Considering $\varphi_1$ the first
    eigenfunction associated to $-v'' + W(x)v$
    (with $\varphi_1 > 0$)  gives
     \begin{equation*}
        (\lambda + \lambda_1) \int_{\G} u\,\varphi_1 \intd x
        = \int_{\G} \bigl(-u'' + W(x)u+\lambda u\bigr) \varphi_1 \intd x
        = \int_{\G} \rho(x) u^{p-1} \varphi_1 \intd x,
    \end{equation*}%
    which is a contradiction since the first term
    is nonpositive and the last is positive
    since $u$, $\rho$ and $\varphi_1$ are positive
    almost everywhere in $\G$.
\end{proof}

\begin{remark}
    \label{inf_lambda_attained}
    Example \ref{three_bridge} shows that
    equality in \eqref{lower_bound_Lambda} may hold,
    and also that the infimum \eqref{def_Lambda} may be attained.
    Indeed, for $k \ge 1$, $\varphi^k$
    is a solution to~\eqref{eq:edo_fixed}
    with $\lambda = -\lambda_{3k-1}$
    having Morse index $3k-2$, so that
    $\Lambda_{3k-2} = -\lambda_{3k-1}$.
    In particular, for $k = 1$,
    we have $\Lambda_1 = -\lambda_2 < -\lambda_1$,
    showing that \eqref{Lambda_1}
    may not hold when $\rho$ vanishes on some edge.
\end{remark}

\subsubsection{Metric graphs with half-lines}

\begin{proposition}
    \label{lower_bound_lambda_noncompact}
    Let $\G \in \GG$ be a metric graph with
    at least one half-line, $p > 2$,
    $W$ and $\rho$ be functions
    satisfying \eqref{hyp_W_rho}.
    Then, if $(\lambda, u) \in
    \IR \times H^1(\G)$
    is a finite Morse index solution to~\eqref{eq:edo_fixed},
    one has
    \begin{equation}
        \label{lower_bound_lambda_half-lines}
        \lambda
        \ge - \inf_{h \text{\,\upshape half-line}}
        \varlimsup_{\substack{x \to +\infty \\ x \in h}} W(x) ,
    \end{equation}
    where $h$ is identified
    with $\intervalco{0, +\infty}$.
\end{proposition}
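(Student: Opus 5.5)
We argue by contradiction, using test functions supported far out on a single half-line. Suppose $(\lambda,u)$ is a finite Morse index solution with
\[
\lambda < - \inf_{h \text{\,\upshape half-line}} \varlimsup_{\substack{x \to +\infty \\ x \in h}} W(x).
\]
Then there exist a half-line $h$, identified with $\intervalco{0,+\infty}$, and a number $\delta>0$ such that
\[
\varlimsup_{\substack{x \to +\infty \\ x \in h}} W(x) < -\lambda - 2\delta .
\]
Since the $\varlimsup$ is essential, there is $x_0>0$ with $W(x)+\lambda \le -\delta$ for almost every $x \in [x_0,+\infty) \subseteq h$. Note that neither the sign of $u$ nor the nonlinear term plays any role. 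Indeed, because $\rho\ge 0$, the nonlinear contribution $-(p-1)\rho|u|^{p-2}\phi^2$ in $Q_u$ only helps make the form negative.

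Next we fix a length $L>0$ with $\pi^2/L^2<\delta$. For $j\in\IN$ we set $I_j\coloneq [x_0+jL,\,x_0+(j+1)L]$ and define
\[
\phi_j(x)\coloneq \sin\bigl(\pi (x-x_0-jL)/L\bigr) \quad \text{on } I_j,
\]
extended by $0$ to the rest of $\G$. Each $\phi_j$ lies in $H^1(\G)\cap\Cc(\G)$ and vanishes near every vertex, so it is an admissible test function. A direct computation gives
\begin{equation*}
  Q_u(\phi_j;\G)
  \le \int_{I_j} \abs{\phi_j'}^2 \intd x + \int_{I_j} (W+\lambda)\phi_j^2 \intd x
  \le \Bigl(\frac{\pi^2}{L^2}-\delta\Bigr)\int_{I_j}\phi_j^2 \intd x < 0 .
\end{equation*}

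For any $N$, the functions $\phi_1,\dots,\phi_N$ have disjoint supports up to endpoints. Hence, for a nonzero $\phi=\sum_j t_j\phi_j$, the form splits as $Q_u(\phi;\G)=\sum_j t_j^2 Q_u(\phi_j;\G)<0$. So $Q_u$ is negative definite on an $N$-dimensional subspace of $H^1(\G)\cap\Cc(\G)$, which gives $\morse(u)\ge N$ for every $N$. This contradicts the finiteness of $\morse(u)$ and proves \eqref{lower_bound_lambda_half-lines}.

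The only point requiring a little care is choosing $x_0$ correctly from the essential $\varlimsup$ in the first step. Beyond that, the argument is routine: $u$ enters only through the nonpositive term $-(p-1)\rho|u|^{p-2}\phi^2$, so no information on the behaviour of $u$ at infinity (such as Proposition~\ref{stable-outside-a-compact}) is needed.
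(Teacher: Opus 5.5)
Your proposal is correct and is essentially the paper's own proof. Both argue by contradiction, note that $\rho\ge 0$ makes the nonlinear term in $Q_u$ nonpositive, and place arbitrarily many disjointly supported sine bumps far out on a half-line where $W+\lambda$ is uniformly negative, so that $Q_u$ is negative definite on subspaces of arbitrarily large dimension. The only difference is cosmetic: the paper fixes the bump frequency $\delta$ through the bound $W+\lambda\le-2\delta^2$, whereas you fix the bump length $L$ so that $\pi^2/L^2<\delta$.
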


\begin{proof}
    By contradiction, assume that there exists
    a half-line $h = \intervalco{0, +\infty}$
    on which one has
    \begin{equation*}
        \lambda + \varlimsup_{x \to +\infty} W(x) < 0.
    \end{equation*}
    Thus, there exists $R > 0$ and $\delta > 0$
    such that one has
    $\lambda + W(x) \le - 2\delta^2$
    for almost every $x \ge R$.
    We now define
  \begin{equation*}
        \psi(x)
        \coloneq \begin{cases}
            \sin(\delta x)
            &\text{if $0 \le x \le \pi/\delta$,}\\
            0
            &\text{otherwise}.
        \end{cases}
    \end{equation*}
    Then, $\psi \in H^1(\IR) \cap \Cc(\IR)$
    and, taking
  $\varphi_i(x) \coloneq \psi(x - R - 2i\pi/\delta)$,
    we obtain a infinite sequence $(\varphi_i)_{i \ge 1}$
    of functions with disjoint supports such that, if
    $\varphi \coloneq \sum_{1 \le i \le k} a_i \varphi_i \ne 0$,
    we have
    \begin{align*}
        \int_\G \abs{\phi'}^2 + \bigl( W(x) + \lambda \bigr) \phi^2
        - (p-1) \rho(x) |u|^{p-2} \phi^2 \intd x
        &\le -\delta \frac{\pi}{2}\sum_{1 \le i \le k} a_i^2
        < 0.
    \end{align*}
    Therefore the Morse index of $u$ is infinite
    since $k$ is arbitrary.
\end{proof}

\begin{remark}
    \label{finite_morse_index_lambda_zero}
    We will see in Example~\ref{compactly_supported_solutions}
    that equality may occur
    in \eqref{lower_bound_lambda_half-lines}.
\end{remark}

\subsection{$L^{\infty}$ bounds on solutions
when $\lambda$ stays bounded}

\subsubsection{$L^\infty$ bounds from bounds
on the Morse index}

We begin this section by stating
a property of superlinear ODEs.
\begin{lemma}
    \label{L_infty_ODE}
    Let $k$ be a positive integer,
    $p > 2$, $\ell > 0$,
    $W \in L^{\infty}\bigl((0, \ell)\bigr)$
    and $\rho \in L^{\infty}\bigl((0, \ell)\bigr)$.
    Assume that
    $\rho_{\text{\upshape m}}$, $\rho_{\text{\upshape M}}$,
    $W_{\text{\upshape M}} \in \intervaloo{0, +\infty}$
    are such that $|W(x)| \le W_{\text{\upshape M}}$
    and $\rho_{\text{\upshape m}} \le \rho(x) \le \rho_{\text{\upshape M}}$
    for almost every $x \in \intervaloo{0, \ell}$.
    Then, there exists
    $\Delta \bigl(k, p, \ell, \rho_{\text{\upshape m}},
    \rho_{\text{\upshape M}}, W_{\text{\upshape M}} \bigr) > 0$
    such that, if $u: \intervalcc{0, \ell} \rightarrow \IR$
    is a solution to the Cauchy problem
    \begin{equation*}
        \begin{cases}
            -u'' + W(x) u = \rho(x) |u|^{p-2} u,
            &x \in \intervaloo{0, \ell},\\
            u(0) = u_0,\ u'(0) = u_0',
        \end{cases}
    \end{equation*}
    that exists on $\intervalcc{0, \ell}$
    and so that $u_0^2 + u_0'^2 \ge \Delta$,
    then $u$ has at least $k$ zeros in $\intervalcc{0, \ell}$.
\end{lemma}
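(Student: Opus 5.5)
We argue by a rescaling and Sturm comparison argument in the spirit of Hartman \cite{Ha}: a solution starting with large energy oscillates fast, so it must vanish many times on a fixed interval. Introduce the energy
\[
E(x) \coloneq \tfrac12 u'(x)^2 + \tfrac{\rho_{\text{\upshape m}}}{p}\,|u(x)|^p ,
\]
or more conveniently the two-sided comparable quantity $F(x) \coloneq u'(x)^2 + |u(x)|^p$. The plan has three steps: (1) show that $F$ cannot decrease much over $\intervalcc{0,\ell}$, so that $F(x) \ge c\,F(0)$ on the whole interval once $F(0)$ is large; (2) show that on any subinterval where $F \ge M$ with $M$ large, consecutive zeros of $u$ are at distance at most $C M^{-(p-2)/(2p)}$; (3) choose $\Delta$ so that this spacing is less than $\ell/(k+1)$.

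\textbf{Step 1: lower bound on $F$.} Write the equation as $u''=W u - \rho|u|^{p-2}u$. Then
\[
\tfrac{d}{dx}\bigl(\tfrac12 u'^2\bigr) = u'(Wu - \rho|u|^{p-2}u).
\]
Since $\rho$ is only $L^\infty$, the term $\rho|u|^{p-2}uu'$ is not an exact derivative, so we control $\log$ of a suitably weighted energy instead. A cleaner route is to use polar (Prüfer-type) coordinates adapted to the homogeneity. Set $r \coloneq F^{1/(2p)}$ and
\[
u = r^{2/p}\,\cdot(\text{angle part}), \qquad u' = r\,\cdot(\text{angle part}),
\]
using a generalized sine $S$ solving $S''=-|S|^{p-2}S$. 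Compute $r'/r$. The $W$-contribution is bounded by $C W_{\text{\upshape M}}\, r^{-(p-2)/p}$, which is small when $r$ is large. The $\rho$-contribution is bounded by $C\,(\rho_{\text{\upshape M}}/\rho_{\text{\upshape m}})\,r^{(p-2)/p}$ times the angular speed. This second bound is the delicate point, since it is not small.

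To handle it, I would first try a simpler bounded-distortion argument. On each time interval during which the angle $\theta$ advances by $2\pi$, the quantity $\log r$ changes by at most a constant $K(\rho_{\text{\upshape m}},\rho_{\text{\upshape M}},p)$. Indeed, $\tfrac12u'^2+\tfrac{\rho_{\text{\upshape M}}}{p}|u|^p$ and $\tfrac12u'^2+\tfrac{\rho_{\text{\upshape m}}}{p}|u|^p$ are respectively non-increasing and non-decreasing, up to the $W$ term, in the phases where $uu'\ge0$ and $uu'\le0$. Each quarter-oscillation therefore multiplies the energy by a factor in $[\rho_{\text{\upshape m}}/\rho_{\text{\upshape M}},\rho_{\text{\upshape M}}/\rho_{\text{\upshape m}}]$. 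Only $k$ oscillations are needed, so there are at most $4k$ quarter phases before the $k$-th zero. Hence, as long as fewer than $k$ zeros have occurred, $F \ge c_k F(0) - C$ with constants independent of $u$. I expect this monotonicity bookkeeping, including absorbing the $W$ term via $|Wuu'| \le W_{\text{\upshape M}}(u^2+u'^2)/2 \ll F$ for $F$ large, to be the main obstacle.

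\textbf{Step 2: spacing of zeros.} On an interval where $F\ge M$, the angular velocity satisfies $\theta' \ge c\, M^{(p-2)/(2p)}$. This follows since at $u=0$, $|u'|$ is large, and at $u'=0$, $|u|$ is large with $|u''| \ge \rho_{\text{\upshape m}}|u|^{p-1} - W_{\text{\upshape M}}|u| \gtrsim |u|^{p-1}$. Equivalently, one can argue directly with convexity: while $u>0$ and $u\ge M_0$, $u''\le -c u^{p-1}$ forces a return to zero in time $O(M_0^{-(p-2)/2})$, as in the proof of Corollary~\ref{Liouville-lambda=0}. Hence each quarter phase lasts at most $C M^{-(p-2)/(2p)}$.

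\textbf{Step 3: conclusion.} Pick $\Delta$ so large that Step 1 gives $F \ge M$ on $\intervalcc{0,\ell}$ until $k$ zeros have occurred, with $4k\,C M^{-(p-2)/(2p)} < \ell$. Since $u_0^2+u_0'^2 \ge \Delta$ makes $F(0)$ large, up to adjusting for $|u_0|$ versus $|u_0|^{p}$, the solution completes $k$ half-oscillations within $\intervalcc{0,\ell}$, and so it has at least $k$ zeros there.
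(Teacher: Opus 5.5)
Your argument is correct and is essentially the phase-plane (Prüfer-type) oscillation argument that the paper does not write out but delegates to \cite[Lemma~2.1]{Ha}: large energy forces fast rotation, and the bounds $\rho_{\text{\upshape m}}\le\rho\le\rho_{\text{\upshape M}}$ keep the energy large over the finitely many quarter-oscillations needed, which is why $\Delta$ depends on $\rho_{\text{\upshape M}}$. When you write it up, fix three slips. First, the monotonicities you actually use are that $\tfrac12u'^2+\tfrac{\rho_{\text{\upshape M}}}{p}|u|^p$ is non-decreasing where $uu'\ge0$ and $\tfrac12u'^2+\tfrac{\rho_{\text{\upshape m}}}{p}|u|^p$ is non-decreasing where $uu'\le0$ (modulo $Wuu'$). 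Second, the $W$ term must be absorbed via $|uu'|\le F^{1/p+1/2}=o(F)$, or via a Gronwall factor $\e^{\pm C\ell}$, because $W_{\text{\upshape M}}(u^2+u'^2)/2$ is not $\ll F$. Third, your parametrization requires $r=F^{1/2}$, not $F^{1/(2p)}$.
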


\begin{proof}
    This can be proved as in \cite[Lemma 2.1]{Ha},
    working with $H^1$ solutions instead
    of classical solutions.
\end{proof}

An important consequence of Lemma~\ref{L_infty_ODE}
is the following
\begin{proposition}
    \label{L_infty_G}
    Let $\G \in \GG$,
    $p > 2$,
    $W$ and $\rho$
    satisfy \eqref{hyp_W_rho},
    $k$ be a positive integer
    and $\Lambda > 0$.
    We define $\G_0 \coloneq \{ x \in \G \mid \rho(x) = 0\}$
    and $\underline{\ell} \coloneq \inf_{e \in \mathcal{E}} |e| > 0$.

    Then, there exists
    $C(k, p, \underline{\ell}, a,
    \|\rho\|_{L^\infty(\G)}, \|W\|_{L^\infty(\G)},
    \Lambda) > 0$
    so that, if
    $(\lambda, u)
    \in \intervalcc{-\Lambda, \Lambda} \times H^1(\G)$
    is a solution to~\eqref{eq:edo_fixed}
    such that on each edge, $u$ either vanishes identically
    or has at most $k$ roots, then
    \begin{equation}
        \label{L_infty_eq}
        \| u \|_{L^{\infty}(\G \setminus \G_0)} \le C
        \quad
        \text{and}
        \quad
        \| u' \|_{L^{\infty}(\G \setminus \G_0)} \le C.
    \end{equation}
\end{proposition}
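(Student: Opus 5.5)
The idea is to work edge by edge and apply Lemma~\ref{L_infty_ODE} on subintervals of fixed length. The bound is only claimed on $\G\setminus\G_0$, that is, on edges where $\rho\ge a$. We never need to control $u$ on edges where $\rho\equiv 0$, so the vertices play no role.

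Let $e$ be an edge with $\rho\ge a$ on $e$, and assume $u\not\equiv 0$ on $e$. On $e$ the function $u$ solves $-u''+(W+\lambda)u=\rho|u|^{p-2}u$, which is the equation of Lemma~\ref{L_infty_ODE} with potential $W+\lambda$. We use the constants
\[
W_{\mathrm M}=\|W\|_{L^\infty}+\Lambda,\qquad \rho_{\mathrm m}=a,\qquad \rho_{\mathrm M}=\|\rho\|_{L^\infty}.
\]
Fix the length $\ell\coloneq \underline{\ell}/2$. Every point $x_0\in e$ is the endpoint of a segment $J\subseteq e$ of length $\ell$ going in one of the two directions: if $e$ is bounded its length is at least $\underline{\ell}$, and if $e$ is a half-line this is obvious. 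Parametrize $J$ as $[0,\ell]$ with $0$ corresponding to $x_0$. If the direction is reversed, replace $u(x)$ by $u(-x)$; this flips the sign of $u'$ but leaves the equation unchanged. Now apply Lemma~\ref{L_infty_ODE} with $k+1$ in place of $k$ and set $\Delta\coloneq\Delta(k+1,p,\ell,a,\rho_{\mathrm M},W_{\mathrm M})$. If
\[
u(x_0)^2+u'(x_0)^2\ge\Delta,
\]
then $u$ has at least $k+1$ zeros in $J\subseteq e$. This contradicts the assumption that $u$ has at most $k$ roots on $e$. Hence $u(x_0)^2+u'(x_0)^2<\Delta$ for every $x_0\in e$. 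Taking $C\coloneq\Delta^{1/2}$, which depends only on $k,p,\underline{\ell},a,\|\rho\|_\infty,\|W\|_\infty,\Lambda$, gives both bounds in \eqref{L_infty_eq}.

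Two technical points need care. The first is the regularity needed to speak of $u'(x_0)$ and to apply the Cauchy problem lemma. Since $u\in H^1$ and the right-hand side is $L^\infty_\loc$, we have $u\in W^{2,\infty}_\loc$ on each edge, hence $u\in\C^1$ up to the vertices. So $u'(x_0)$ makes sense, including at vertices taken as one-sided derivatives along $e$. The solution trivially exists on all of $J$, since $u$ is already given there.

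The second point is the main thing to check: the constant $\Delta$ must be uniform in $\lambda\in[-\Lambda,\Lambda]$ and in the edge. This holds because Lemma~\ref{L_infty_ODE} depends only on the bounds $W_{\mathrm M},\rho_{\mathrm m},\rho_{\mathrm M}$, and $\ell$ was chosen uniformly. Edges where $u\equiv0$ are trivial. This completes the plan.
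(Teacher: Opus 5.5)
Your proposal is correct and follows the paper's argument: the paper's proof is the one-line instruction to apply Lemma~\ref{L_infty_ODE} edge by edge with $W_{\mathrm M}=\|W\|_{L^\infty(\G)}+\Lambda$, $\rho_{\mathrm m}=a$, $\rho_{\mathrm M}=\|\rho\|_{L^\infty(\G)}$ and $\ell=\underline{\ell}/2$. Your write-up spells out the details the paper leaves implicit: you invoke the lemma with $k+1$ zeros so as to contradict ``at most $k$ roots'', you reflect the segment when it has to run backward from $x_0$, and you justify $\C^1$ regularity up to the vertices.
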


\begin{proof}
  We apply Lemma~\ref{L_infty_ODE} edge by edge, taking
  $W_{\text{\upshape M}} \coloneq \|W\|_{L^\infty(\G)} + \Lambda$,
  $\rho_{\text{\upshape m}} \coloneq a$
  (where $a$ is given by assumption~\ref{hyp_W_rho}) and
  $\rho_{\text{\upshape M}} \coloneq \|\rho\|_{L^\infty(\G)}$ and
  $\ell \coloneq \underline{\ell}/2$.
\end{proof}

\begin{remark}
    \label{roots_morse_index}
    If a solution $u$ has at least
    $k$ simple roots inside some edge
    in which $\rho \ge a$,
    then the Morse index of $u$ is at least $k-1$
    using Proposition~\ref{Morse_nodal_zones}.
    Therefore, one can use Proposition~\ref{L_infty_G} to derive
    $L^{\infty}$ bounds on $u$ and on $u'$ knowing the Morse index of
    a solution.
\end{remark}

\begin{remark}
    In the ``localized nonlinearity'' setting,
    $\G$ has finitely many edges and
    $W$ and $\rho$ are identically equal to zero
    on the half-lines.
    Then, $\G \setminus \G_0$ is equal to  the
    compact core of $\G$ (made of all bounded edges of $\G$),
    but one nevertheless has a $L^{\infty}$ bound on the full graph $\G$,
    see point~\ref{Linfty_locnonlin}
    of Theorem~\ref{thm:locnonlin}.
\end{remark}

\begin{remark}
    Considering the solution of
    Example~\ref{three_bridge},
    we see that all functions of the form
    $t \varphi^k$, $t \in \IR$ solve \eqref{eq:edo_fixed}
    (with $\lambda = -\lambda_{3k-1}$,
    $W \equiv 0$ and $\rho$ given by
    \eqref{three_bridge_rho})
    and have the same Morse index. Taking $t \to +\infty$
    shows that one does not have $L^\infty$
    bounds on $\G_0$.
    We also observe that
    $t \varphi^k$ vanishes identically
    on $\G \setminus \G_0$
    so that \eqref{L_infty_eq} holds.
\end{remark}

\subsubsection{Bounds on the Morse index
from $L^\infty$ bounds}

On compact graphs, a bound on the Morse index
follows from a $L^\infty$ bound.

\begin{proposition}
    \label{Linfty_to_morse_compact}
    Let $\G$ be a compact metric graph,
    $p > 2$, $W \in L^{\infty}(\G)$ and $\rho \in L^{\infty}(\G)$.
    Then, for every $C > 0$, there exists
    a positive integer $m$ such that,
    for any $(\lambda, u) \in \IR \times H^1(\G)$
    solution to~\eqref{eq:edo_fixed}
    such that $\lambda \geq -C$
    and $\| u \|_{L^\infty(\G)} \le C$,
    $\morse(u) \le m$. In particular,
    all $H^1(\G)$ solutions to~\eqref{eq:edo_fixed}
    (for a given $\lambda \in \IR$)
    have a finite Morse index.
\end{proposition}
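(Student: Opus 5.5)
The plan is to compare the quadratic form $Q_u$ with a fixed, $u$-independent quadratic form on the compact graph and then count negative directions via eigenvalues. On a compact graph, $H^1(\G)\cap\Cc(\G)=H^1(\G)$. For any solution $(\lambda,u)$ with $\lambda\ge -C$ and $\|u\|_{L^\infty(\G)}\le C$, and any $\varphi\in H^1(\G)$,
\begin{equation*}
  Q_u(\varphi;\G)
  = \int_\G \abs{\varphi'}^2 + \bigl(W(x)+\lambda\bigr)\varphi^2
  - (p-1)\rho(x)\abs{u}^{p-2}\varphi^2 \intd x
  \ge \int_\G \abs{\varphi'}^2 + W(x)\varphi^2 \intd x - K \int_\G \varphi^2 \intd x,
\end{equation*}
where $K \coloneq C + (p-1)\|\rho\|_{L^\infty(\G)} C^{p-2}$. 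So $Q_u(\varphi;\G)\ge q_K(\varphi)$, where $q_K$ is the quadratic form of the operator $v\mapsto -v''+W(x)v-Kv$ with Kirchhoff conditions.

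Since $\G$ is compact, this self-adjoint operator has compact resolvent, because $H^1(\G)$ embeds compactly into $L^2(\G)$ (finitely many bounded edges, Rellich on each). Its spectrum is therefore a sequence of eigenvalues $\lambda_j(-v''+Wv)-K$ tending to $+\infty$. Let $m$ be the number of indices $j$ with $\lambda_j(-v''+Wv)\le K$; this is finite and depends only on $\G$, $W$, $\rho$, $p$, $C$.

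The key step is the min-max argument. Suppose $X\subset H^1(\G)$ has dimension $m+1$ and $Q_u<0$ on $X\setminus\{0\}$. Let $E$ be the closed span of the eigenfunctions $\varphi_j$ with $j>m$. Since $E$ has codimension $m$, $X\cap E\ne\{0\}$. For $\varphi$ in this intersection, $q_K(\varphi)\ge(\lambda_{m+1}-K)\|\varphi\|_{L^2}^2\ge0$, hence $Q_u(\varphi;\G)\ge0$, a contradiction. Therefore $\morse(u)\le m$.

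For the final claim, fix $\lambda$ and let $u\in H^1(\G)$ be a solution. On a compact graph, $H^1(\G)\hookrightarrow L^\infty(\G)$, so $u$ is bounded. Applying the first part with $C\coloneq\max\{|\lambda|,\|u\|_{L^\infty(\G)}\}$ shows that $\morse(u)$ is finite.

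The only point needing care is the spectral fact, namely discreteness of the spectrum and the variational characterization, for Kirchhoff Laplacians with an $L^\infty$ potential on compact graphs. I would quote it as standard (e.g.\ \cite{BK}) rather than prove it; no real obstacle is expected beyond this.
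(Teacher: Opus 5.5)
Your proof is correct and follows essentially the same route as the paper. Both arguments bound $Q_u$ from below by a $u$-independent quadratic form and use the discreteness of the spectrum on a compact graph to bound the dimension of any negative subspace; the paper absorbs $W$ into the constant and compares with the Kirchhoff Laplacian, getting $\int_\G|\varphi'|^2 < D\int_\G\varphi^2$, while you keep $W$ in the operator and spell out the codimension step that the paper leaves implicit (if your count gives $m=0$, replace it by $1$ to get the positive integer the statement asks for).
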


\begin{proof}
    If $Y \subseteq H^1(\G) \cap \Cc(\G)$
    is a subspace such that
    \begin{equation*}
        \forall \varphi \in Y \setminus \{ 0 \}, \quad
        \int_\G \Big(\abs{\phi'}^2
        + (W(x) + \lambda) \abs{\phi}^2
        - (p-1)\rho(x) |u(x)|^{p-2}|\phi|^2\Big) \intd x
        < 0,
    \end{equation*}
    then for some $D > 0$ independent of $u$, one has
    \begin{equation*}
        \forall \varphi \in Y \setminus \{ 0 \}, \quad
        \int_\G \abs{\phi'}^2 \intd x
        < D \int_\G \abs{\phi}^2 \intd x.
    \end{equation*}
    Since there are only finitely many eigenvalues
    of the Laplacian on $\G$ less or equal than $D$,
    this implies that the dimension of $Y$ is bounded.
\end{proof}

\begin{corollary}
    \label{corollary_bounds_compact}
    Let $\G$ be a compact metric graph,
    $p > 2$, $W \in L^{\infty}(\G)$,
    $\rho \in L^{\infty}(\G)$ be
    such that $\inf_{\G} \rho > 0$.
    Let $\Lambda \in \intervaloo{0, +\infty}$
    and let $\mathcal{U} \subseteq
    \intervalcc{-\Lambda, \Lambda} \times H^1(\G)$
    be a set of solutions to~\eqref{eq:edo_fixed}.
    Then, all solutions in $\mathcal{U}$
    have a finite Morse index, and one has that
    \begin{equation*}
        \sup_{(\lambda, u) \in \mathcal{U}}
        \| u \|_{L^\infty(\G)} < +\infty
        \quad\iff\quad
        \sup_{(\lambda, u) \in \mathcal{U}} \,
        \sup_{e \in \mathcal{E}}
        \| u \|_{C^1(e)} < +\infty
        \quad\iff\quad
        \sup_{(\lambda, u) \in \mathcal{U}}
        \morse(u) < +\infty.
    \end{equation*}
\end{corollary}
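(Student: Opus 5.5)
The plan is to first establish finiteness of the Morse index, and then to prove the three equivalences as a cycle of implications.

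Since $\G$ is compact, every $u\in H^1(\G)$ is bounded. Proposition~\ref{Linfty_to_morse_compact}, applied with the constant $\max\{\Lambda,\|u\|_{L^\infty(\G)}\}$, therefore gives $\morse(u)<\infty$ for each $(\lambda,u)\in\mathcal U$. For the equivalences, I will prove
\[
\text{($L^\infty$ bound)} \limplies \text{(Morse bound)} \limplies \text{($C^1$ bound)} \limplies \text{($L^\infty$ bound)}.
\]

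\emph{$C^1$ bound implies $L^\infty$ bound.} This is trivial, since $\|u\|_{L^\infty(\G)}\le\sup_e\|u\|_{C^1(e)}$ and $\G$ has finitely many edges.

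\emph{$L^\infty$ bound implies Morse bound.} Set $C:=\max\{\Lambda,\sup_{\mathcal U}\|u\|_{L^\infty(\G)}\}$. Every $(\lambda,u)\in\mathcal U$ has $\lambda\ge-\Lambda\ge-C$ and $\|u\|_{L^\infty}\le C$. Proposition~\ref{Linfty_to_morse_compact} then yields a single integer $m$ with $\morse(u)\le m$ for all of them.

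\emph{Morse bound implies $C^1$ bound.} This is the substantive step. Let $m^*:=\sup_{\mathcal U}\morse(u)<\infty$.
\begin{itemize}
\item Since $\inf_\G\rho>0$, hypothesis \eqref{hyp_W_rho} holds with $a:=\inf\rho$ and $\G_0=\emptyset$. In particular no edge carries $\rho\equiv 0$.
\item Fix an edge $e$ on which $u$ is not identically zero. Solutions of this ODE with $\rho\ge a$ have only simple zeros in the interior, by Cauchy uniqueness. If $u$ has $r$ roots in $e$, these cut $e$ into subintervals, and at least $r-1$ of them are nodal zones of $u$ lying entirely inside $e$.
\item By Proposition~\ref{Morse_nodal_zones}, and since $\G_0=\emptyset$, this gives $r-1\le\morse(u)\le m^*$. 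Hence $u$ has at most $k:=m^*+1$ roots on each edge, as in Remark~\ref{roots_morse_index}.
\end{itemize}
Proposition~\ref{L_infty_G}, with this $k$, $\Lambda$, $a=\inf\rho$, and the norms of $W$ and $\rho$, then bounds $\|u\|_{L^\infty(\G)}$ and $\|u'\|_{L^\infty(\G)}$ uniformly over $\mathcal U$. Because $\G_0=\emptyset$, this bound covers the whole graph, so $\sup_e\|u\|_{C^1(e)}$ is bounded.

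The main care point is the root-counting in this last step. One must check that the nodal zones produced by the roots inside a single edge are genuinely distinct connected components of $\{u\ne0\}$ in $\G$, and not glued together through vertices. Zones strictly between consecutive interior roots of one edge are separated by those roots, so they are distinct. This gives at least $r-1$ distinct zones, which is enough. The assumption $\inf_\G\rho>0$ is what makes Proposition~\ref{Morse_nodal_zones} apply to every zone and lets the bound of Proposition~\ref{L_infty_G} extend to all of $\G$.
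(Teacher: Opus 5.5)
Your proposal is correct and takes the same route as the paper. The paper's proof simply combines three results: Proposition~\ref{Linfty_to_morse_compact}, which gives finiteness of the Morse index and the implication from an $L^\infty$ bound to a Morse bound; Proposition~\ref{Morse_nodal_zones}, which turns a Morse bound into a bound on the number of roots per edge as in Remark~\ref{roots_morse_index}; and Proposition~\ref{L_infty_G}, applied with $\G_0=\emptyset$ since $\inf_\G\rho>0$. Your check that the intervals between consecutive roots inside one edge are distinct nodal zones of $u$ in $\G$ is exactly the detail the paper leaves implicit.
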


\begin{proof}
    This follows by combining
    Proposition~\ref{Linfty_to_morse_compact}, Proposition~\ref{L_infty_G}
    and Proposition \ref{Morse_nodal_zones}.
\end{proof}

When the graph is non-compact, one may obtain
an equivalent of Proposition~\ref{Linfty_to_morse_compact}
for problems with a localized nonlinearity
and solutions such that $W(x) + \lambda$ is
\emph{nonnegative}.

\begin{proposition}
    \label{Linfty_to_morse_localized}
    Let $\G \in \GG$,
    $p > 2$,
    $W \in L^{\infty}(\G)$, $\rho \in L^{\infty}(\G)$.
    Let $\K \subseteq \G$ be a compact set.
    Then, for every $C > 0$, there exists
    a positive integer $m$ such that,
    if $(\lambda, u) \in \IR \times H^1(\G)$
    is a solution to~\eqref{eq:edo_fixed}
    such that $\lambda + W(x) \ge 0$,
    $\| u \|_{L^\infty(\K)} \le C$
    and $u \cdot \rho \equiv 0$
    on $\G \setminus \K$, then $\morse(u) \le m$.
    In particular, all solutions to~\eqref{eq:edo_fixed}
    with compact support and $W(x) + \lambda \ge 0$
    have a finite Morse index.
\end{proposition}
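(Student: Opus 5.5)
The plan is to adapt the proof of Proposition~\ref{Linfty_to_morse_compact}. The one new difficulty is that negative directions of the quadratic form might now escape to the noncompact part of $\G$. The hypotheses $\lambda+W\ge 0$ and $\rho\,u\equiv 0$ off $\K$ are there to prevent this.

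Let $Y\subseteq H^1(\G)\cap\Cc(\G)$ be a subspace on which
\begin{equation*}
Q_u(\varphi;\G)=\int_\G \abs{\varphi'}^2+(W+\lambda)\varphi^2-(p-1)\rho\abs{u}^{p-2}\varphi^2\intd x
\end{equation*}
is negative definite. Since $\rho\abs{u}^{p-2}$ vanishes outside $\K$, is bounded on $\K$ by $\|\rho\|_{L^\infty}C^{p-2}$, and $W+\lambda\ge 0$, every $\varphi\in Y\setminus\{0\}$ satisfies
\begin{equation*}
\int_\G \abs{\varphi'}^2\intd x < D\int_\K \varphi^2\intd x,\qquad D\coloneq (p-1)\|\rho\|_{L^\infty(\G)}C^{p-2}.
\end{equation*}
Here $D$ depends only on $C$, $p$, $\rho$. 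It would be tempting to also use $\lambda\ge -\|W\|_\infty$ to bound the $W+\lambda$ term, but this is unnecessary: the nonnegativity of $W+\lambda$ lets us drop that term entirely.

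The task is therefore to show that the dimension of any subspace of $H^1(\G)$ satisfying $\int_\G\abs{\varphi'}^2< D\int_\K\varphi^2$ is bounded by an $m$ depending only on $D$, $\K$ and $\G$. Fix a compact connected subgraph $\K'\supseteq\K$ made of finitely many edges (or pieces of edges), enlarged by a small collar; this is possible because $\G\in\GG$ is locally finite. Consider the restriction map $Y\to H^1(\K')$, $\varphi\mapsto\varphi|_{\K'}$. It is injective on $Y$: if $\varphi|_{\K'}=0$, then $\int_\K\varphi^2=0$, so the strict inequality forces $\varphi=0$. Hence $\dim Y=\dim Y|_{\K'}$, and every nonzero $\psi\in Y|_{\K'}$ satisfies
\begin{equation*}
\int_{\K'}\abs{\psi'}^2\le\int_\G\abs{\varphi'}^2< D\int_{\K'}\psi^2 .
\end{equation*}
The form $\psi\mapsto\int_{\K'}\abs{\psi'}^2$ on the compact graph $\K'$, taken with natural (Kirchhoff/Neumann) conditions, has discrete spectrum with finitely many eigenvalues below $D$. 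By the min-max principle, any subspace on which the Rayleigh quotient is $<D$ has dimension at most the number $N(D)$ of such eigenvalues. Taking $m\coloneq N(D)$ proves the main claim.

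For the "in particular" statement: if $u$ has compact support, choose $\K$ to be that support (enlarged to a compact subgraph if needed). Then $u\cdot\rho\equiv 0$ off $\K$, $u$ is bounded on $\K$ by continuity, and the main claim applies with $C=\|u\|_{L^\infty}$.

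The main obstacle is the injectivity of the restriction to $\K'$. This is exactly where we need both $W+\lambda\ge0$ and the vanishing of $\rho u$ off $\K$, since together they remove any negative contribution from outside $\K$. Without them, the Morse index can be infinite, as Proposition~\ref{lower_bound_lambda_noncompact} shows.
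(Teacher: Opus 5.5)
Your argument is correct and is essentially the paper's proof. You drop the nonnegative $(W+\lambda)\varphi^2$ term, use $\rho u\equiv 0$ off $\K$ and $\|u\|_{L^\infty(\K)}\le C$ to get a uniform Rayleigh-quotient bound, count the eigenvalues below $D$ on a compact graph, and show that restriction is injective on $Y$, since a nonzero $\varphi$ vanishing on $\K$ cannot make $Q_u$ negative. The only difference is a slightly more careful step on your side: you pass to a compact subgraph $\K'\supseteq\K$ and keep $\int_\G\abs{\varphi'}^2$ on the left, so the spectral count makes sense even when $\K$ is merely a compact set, whereas the paper works directly with $H^1(\K)$.
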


\begin{proof}
    If $Y \subseteq H^1(\G) \cap \Cc(\G)$
    is a subspace such that
    \begin{equation}
        \label{morse_loc_prop}
        \forall \varphi \in Y \setminus \{ 0 \}, \quad
        \int_\G \Bigl(\abs{\phi'}^2
        + (W(x) +\lambda) \abs{\phi}^2
        - (p-1)\rho(x) |u(x)|^{p-2}|\phi|^2\Bigr) \intd x
        < 0,
    \end{equation}
    then, remarking that
    \begin{equation*}
      \forall \varphi\in H^1(\G), \quad
      \int_{\K} \abs{\phi'}^2 \le \int_\G \Big(\abs{\phi'}^2
      + (W(x) +\lambda) \abs{\phi}^2 \Bigr) \intd x,
    \end{equation*}
    since $W(x) +\lambda \ge 0$, we deduce that
    there exists $D > 0$ independent of $u$ such that
    \begin{equation*}
        \forall \varphi \in Y \setminus \{ 0 \}, \quad
        \int_{\K} \abs{\phi'}^2 \intd x
        < D \int_{\K} \abs{\phi}^2 \intd x.
    \end{equation*}
    As in Proposition~\ref{Linfty_to_morse_compact},
    we deduce that the dimension of
    $Y_{\K} \coloneq \{ \varphi_{\mid \K} \mid \varphi \in Y \}
    \subseteq H^1(\K)$
    is bounded by a number independent of~$u$.
    To conclude, we show that $\dim Y_{\K} = \dim Y$.
    To do so, let us prove that the linear map
    $Y \to Y_{\K}: \varphi \mapsto \varphi_{\mid \K}$
    is a bijection. It is clearly surjective.
    Moreover, assuming it is not injective,
    there exists $\varphi \in Y \setminus \{ 0 \}$
    which is identically equal to zero on $\K$.
    However, for this $\varphi$,
    \eqref{morse_loc_prop} is not satisfied, which ends
    the proof.
\end{proof}

The next example shows that the sign
of $W(x) +\lambda$
is important in the previous proposition.
\begin{example}[Compactly supported $H^1$ solutions]
    \label{compactly_supported_solutions}
    Let $\lambda \in \IR$ and choose $\ell > 0$ such that
    there exists a periodic solution of \eqref{eq:edo_fixed}
    having two roots in a circle of length $\ell$
    (see Proposition~\ref{periodic_sol}).
    Consider the tadpole graph $\G_{\text{t}}$
    with a loop of length $\ell$ depicted
    in Figure~\ref{fig:tadpole}.
    We take $W \equiv 0$ and $\rho \equiv 1$
    on the loop (the values of $\rho$ on the half-line
    are irrelevant).

    \begin{figure}[ht]
        \centering
        \begin{tikzpicture}[scale=1.2]
            \node at (0.5,0) [nodo] (0) {};
            \node at (5.25, 0) (1) {};
            \draw (0) -- (1);
            \draw (0, 0) ellipse (0.5 and 0.5);
            \node at (5.5, -0.03) {$\cdots$};
        \end{tikzpicture}
        \caption{A tadpole graph $\G_{\text{t}}$.}
        \label{fig:tadpole}
    \end{figure}
    Putting the periodic solution on the loop
    in such a way that one of its roots
    is at the vertex at which the half-line
    is attached and extending it by zero on
    the half-line, one obtains a solution $u \in H^1(\G)$
    of \eqref{eq:edo_fixed} with compact support.
    Now:
    \begin{itemize}
        \item if $\lambda < 0$,
        Proposition~\ref{lower_bound_lambda_noncompact}
        implies that $u$ is a $H^1$ solution
        with infinite Morse index
        (in contrast to the compact case,
        for which this is impossible, as shown
        by Proposition~\ref{Linfty_to_morse_compact});

        \item if $\lambda = 0$,
        Proposition~\ref{Linfty_to_morse_localized}
        shows that $u$ has a finite Morse index,
        showing that equality may occur
        in \eqref{lower_bound_lambda_half-lines} as announced
        in Remark~\ref{finite_morse_index_lambda_zero}.
    \end{itemize}
\end{example}

The next example shows that on non-compact graphs,
there may exist
sequences of solutions which are bounded in $L^\infty$
but their Morse indices are not.
\begin{example}
    Consider the $\mathbb{Z}$-periodic metric graph
    depicted in Figure~\ref{fig:necklace},
    where all edges have length one.
    We take $W \equiv 0$ and $\rho \equiv 1$.

    \begin{figure}[ht]
        \centering
        \begin{tikzpicture}[scale=1.2]
            \node at (-5.25, 0) (0) {};
            \node at (-4.5,0) [nodo] (1) {};
            \node at (-3.5,0) [nodo] (2) {};
            \node at (-2.5,0) [nodo] (3) {};
            \node at (-1.5,0) [nodo] (4) {};
            \node at (-0.5,0) [nodo] (5) {};
            \node at (0.5,0) [nodo] (6) {};
            \node at (1.5,0) [nodo] (7) {};
            \node at (2.5,0) [nodo] (8) {};
            \node at (3.5,0) [nodo] (9) {};
            \node at (4.5,0) [nodo] (10) {};
            \node at (5.25, 0) (11) {};
            \draw (0) -- (1);
            \draw (2) -- (3);
            \draw (4) -- (5);
            \draw (6) -- (7);
            \draw (8) -- (9);
            \draw (10) -- (11);
            \draw (-4, 0) ellipse (0.5 and 0.5);
            \draw (-2, 0) ellipse (0.5 and 0.5);
            \draw (0, 0) ellipse (0.5 and 0.5);
            \draw (2, 0) ellipse (0.5 and 0.5);
            \draw (4, 0) ellipse (0.5 and 0.5);
            \node at (-5.5, -0.03) {$\cdots$};
            \node at (5.5, -0.03) {$\cdots$};
            \node at (-4, 0.75) {$\mathcal{L}_{-2}$};
            \node at (-2, 0.75) {$\mathcal{L}_{-1}$};
            \node at (0, 0.75) {$\mathcal{L}_0$};
            \node at (2, 0.75) {$\mathcal{L}_1$};
            \node at (4, 0.75) {$\mathcal{L}_2$};
            \node at (-4.75, 0.2) {$v_{-4}$};
            \node at (-3.3, 0.2) {$v_{-3}$};
            \node at (-2.7, 0.2) {$v_{-2}$};
            \node at (-1.3, 0.2) {$v_{-1}$};
            \node at (-0.65, 0.2) {$v_0$};
            \node at (0.65, 0.2) {$v_1$};
            \node at (1.35, 0.2) {$v_2$};
            \node at (2.65, 0.2) {$v_3$};
            \node at (3.35, 0.2) {$v_4$};
            \node at (4.65, 0.2) {$v_5$};
        \end{tikzpicture}
        \caption{A $\mathbb{Z}$-periodic graph.}
        \label{fig:necklace}
    \end{figure}

    Let $\lambda \ge 0$.
    On each loop $\mathcal{L}_i$,
    one put a periodic solution
    having two roots in the loop, in such a way
    that the roots coincide with the vertices
    $v_i$ and $v_{i+1}$ (see Proposition~\ref{periodic_sol}).
    In this way, for any finite subset
    $I \subseteq \mathbb{Z}$,
    one obtains a solution $u_I \in H^1(\G)$
    given by a periodic solution on every loop
    $\mathcal{L}_i$ with $i \in I$ and zero on all other
    edges. By construction, all those solutions have
    the same $L^\infty$ norm and solve the same equation.
    Since $\lambda \ge 0$ and those solutions
    have a compact support,
    Proposition~\ref{Linfty_to_morse_localized}
    implies that all solutions $u_I$
    have a finite Morse index.\footnote{Remark that
        the bound on the Morse index given by
        Proposition~\ref{Linfty_to_morse_localized}
        depends on the compact set $\K$,
        whence on $I$, so that we do not have
        a uniform Morse index bound
        on the solutions $u_I$.}
    However, since the number of nodal zones of those
    solutions is arbitrarily high when $|I| \to \infty$,
    their Morse indices are also unbounded
    according to Proposition~\ref{Morse_nodal_zones}.
\end{example}

\subsection{$L^2$ bounds on solutions
for large values of $\lambda$}

The following proposition is a direct consequence
of point \eqref{eq:mass-convergence}
from Theorem~\ref{main}, the main result
of the first sections of the paper.

\begin{proposition}
    \label{L2_large_lambda}
    Let $\G \in \GGfin$,
    $p > 2$, $m$ be a positive integer,
    $W$ and $\rho$
    satisfy \eqref{hyp_W_rho}.
    Then, there exists
    $\Lambda > 0$, $c > 0$, and $C > 0$
    so that for every $\lambda \ge \Lambda$,
    if $(\lambda, u) \in \IR \times (H^1(\G) \setminus \{ 0 \})$
    is a solution to~\eqref{eq:edo_fixed}
    with $\morse(u) \le m$, then
    \begin{equation*}
        c \lambda^{\frac{6-p}{2(p-2)}}
        \le \int_\G \abs{u}^2 \intd x
        \le C \lambda^{\frac{6-p}{2(p-2)}}.
    \end{equation*}
\end{proposition}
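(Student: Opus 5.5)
We argue by contradiction and reduce everything to Theorem~\ref{main} applied with constant potentials $W_n \equiv W$ and $\rho_n \equiv \rho$. Assumption \eqref{hyp_W_rho} gives \ref{H}, with $\bar W \coloneq \|W\|_{L^\infty}$ and $b \coloneq \max(\|\rho\|_{L^\infty}, 2a)$; shrinking $a$ if needed ensures $a<b$. Note that $\frac{6-p}{2(p-2)} = \frac{q}{p-2}-\frac12$ with $q=2$. So the claim is equivalent to the following: for $\lambda$ large, the quantity
\[
M(\lambda,u) \coloneq \lambda^{\frac12-\frac{2}{p-2}}\int_\G |u|^2 \intd x
\]
stays in a compact subset $[c,C]$ of $\intervaloo{0,+\infty}$, uniformly over nontrivial solutions with $\morse(u)\le m$.

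\textit{Upper bound.} Suppose no $\Lambda$, $C$ work. Then there are solutions $(\lambda_n,u_n)$ with $u_n\ne 0$, $\morse(u_n)\le m$, and $M(\lambda_n,u_n)\to+\infty$, where either $\lambda_n\to+\infty$ or, if we fix a candidate $\Lambda$, the $\lambda_n$ could a priori stay bounded. To avoid the second alternative, we choose the contradiction sequence with $\lambda_n\ge n$. Concretely, if for every $\Lambda$ the bound fails, then for each $n$ we pick $\lambda_n\ge n$ and a solution $u_n$ with $M>n$. Since $\lambda_n\to+\infty$ and $\morse(u_n)\le m$, Theorem~\ref{main} applies. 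After passing to a subsequence, \eqref{eq:mass-convergence} with $q=2$ gives that $M(\lambda_n,u_n)$ converges to a finite positive limit, which contradicts $M\to\infty$.

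\textit{Lower bound.} The argument is the same: a sequence with $\lambda_n\to\infty$ and $M(\lambda_n,u_n)\to 0$ contradicts the positivity of the limit in \eqref{eq:mass-convergence}. The quantifier structure in the statement needs care, since we must find a single $\Lambda$ that works for both bounds. We therefore phrase the whole statement as
\[
\liminf_{\lambda\to\infty}\ \inf_u M(\lambda,u) > 0 \quad\text{and}\quad \limsup_{\lambda\to\infty}\ \sup_u M(\lambda,u) < \infty ,
\]
and each part follows from the subsequence argument above. Here the infimum and supremum are taken over nontrivial solutions at $\lambda$ with Morse index at most $m$. This is legitimate because Theorem~\ref{main} holds along any sequence, after extraction, and the limit in \eqref{eq:mass-convergence} is finite and positive.

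The only real subtlety is the possibility that no solution exists at a given $\lambda$, in which case the statement is vacuous at that $\lambda$. Otherwise, the main point to check is that Theorem~\ref{main} allows arbitrary sequences of solutions: its conclusion holds for a subsequence, but contradicting an unbounded or vanishing $M$ along a sequence only requires one subsequence. The hypothesis $u_n\ne0$ is exactly the nontriviality assumed in Theorem~\ref{main}, and $\G\in\GGfin$ is assumed here. No other obstacle arises, and the proposition is essentially a restatement of \eqref{eq:mass-convergence}.
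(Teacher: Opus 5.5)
Your proposal is correct and follows the same route as the paper. Both argue by contradiction separately for each inequality: you extract solutions $(\lambda_n,u_n)$ with $\lambda_n\to+\infty$, $\morse(u_n)\le m$ and $\lambda_n^{\frac{p-6}{2(p-2)}}\int_\G \abs{u_n}^2\intd x$ tending to $0$ or to $+\infty$. This contradicts \eqref{eq:mass-convergence} with $q=2$, and you then take $\Lambda$ as the maximum of the two thresholds. Your explicit check that \eqref{hyp_W_rho} gives \ref{H} for the constant sequences $W_n\equiv W$, $\rho_n\equiv\rho$ is a detail the paper leaves implicit.
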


\begin{proof}
    Assume that one cannot find the existence
    of a couple $(\Lambda_1, c)$ such that the first
    inequality holds. If so, there exists a sequence
    of solutions $(\lambda_n, u_n)
    \subseteq \IR \times (H^1(\G) \setminus \{ 0 \})$
    with $\morse(u_n) \le m$  and $\lambda_n\to\infty$ so that
    \begin{equation*}
        \lim_{n \to \infty}
        \lambda_n^{\frac{p-6}{2(p-2)}}
        \int_\G \abs{u_n}^2 \intd x = 0.
    \end{equation*}
    This contradicts \eqref{eq:mass-convergence}
    (with $q = 2$).
    Similarly, if one does not have the existence
    of a couple $(\Lambda_2, C)$ such that the second
    inequality holds, there exists a sequence
    of solutions $(\lambda_n, u_n)
    \subseteq \IR \times (H^1(\G) \setminus \{ 0 \})$
    with $\morse(u_n) \le m$ and $\lambda_n\to\infty$  so that
    \begin{equation*}
        \lim_{n \to \infty}
        \lambda_n^{\frac{p-6}{2(p-2)}}
        \int_\G \abs{u_n}^2 \intd x = +\infty,
    \end{equation*}
    again contradicting \eqref{eq:mass-convergence}
    (with $q = 2$).
    We conclude taking $\Lambda \coloneq \max(\Lambda_1, \Lambda_2)$.
\end{proof}

\subsection{Behavior of the $L^2$ norms in terms
of $\lambda$ in some usual settings}

In all this subsection, we define
\begin{equation}
    q_{m^*}(\lambda)
    \coloneq \sup \bigl\{ \| u \|_{L^2(\G)} \mid
    u \in H^1(\G) \setminus \{ 0 \},\
    (\lambda, u) \text{ solves } \eqref{eq:edo_fixed},\
    \morse(u) \le m^* \bigr\},
\end{equation}
with the convention that
$q_{m^*}(\lambda) = 0$
if no such $u \in H^1(\G) \setminus \{ 0 \}$ exists.
We want to describe the behavior of $q_{m^*}(\lambda)$
in terms of $\lambda$ in different settings.
More precisely, in three common settings, we show that,
even if the behavior is the same for $\lambda$ large,
the global behavior  is different.

\subsubsection{Compact setting}
In the compact case, we may sometimes deduce that
$L^2$-norms converge to $0$ when $\lambda$
converges to the threshold value $\Lambda_{m^*}$
defined by~\eqref{def_Lambda}.

\begin{proposition}
    \label{limit_lambda_small}
    Let $\G$ be a compact metric graph, $p > 2$,
    $m^*$ be a positive integer,
    $W \in L^{\infty}(\G)$
    and $\rho \in L^{\infty}(\G)$
    be such that $\inf_{\G} \rho > 0$.
    Then, if the infimum defining $\Lambda_{m^*}$
    in \eqref{def_Lambda} is not attained
    (in particular, in view of
    Proposition~\ref{lower_bound_lambda_compact},
    if $m^* = 1$), one has
    \begin{equation}
        \label{small_mass_near_threshold}
        \lim_{\lambda \to \Lambda_{m^*}^+}
        q_{m^*}(\lambda)
        = 0.
    \end{equation}
\end{proposition}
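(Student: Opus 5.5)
I will argue by contradiction. Suppose there are $\delta > 0$ and a sequence $\lambda_n \to \Lambda_{m^*}^+$ with $q_{m^*}(\lambda_n) > \delta$. By the definition of $q_{m^*}$ we then get solutions $u_n \in H^1(\G)\setminus\{0\}$ of \eqref{eq:edo_fixed} with $\lambda = \lambda_n$, $\morse(u_n) \le m^*$ and $\|u_n\|_{L^2(\G)} \ge \delta$. Note that $\Lambda_{m^*}$ is finite: it is bounded below by Proposition~\ref{lower_bound_lambda_compact}, and it is below $+\infty$ since the sequence exists. So $(\lambda_n)$ is bounded, say in $[-\Lambda,\Lambda]$.

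The first step is a uniform $C^1$ bound. Since $\inf_\G \rho > 0$, we have $\G_0 = \emptyset$. By Corollary~\ref{corollary_bounds_compact}, or directly from Proposition~\ref{L_infty_G} combined with Remark~\ref{roots_morse_index} (on each edge the number of roots is controlled by $\morse(u_n) \le m^*$ via Proposition~\ref{Morse_nodal_zones}), the family $(u_n)$ is bounded in $L^\infty(\G)$ and in $C^1$ on each edge. The equation then bounds $u_n''$ in $L^\infty$ on each edge. Since $\G$ is compact with finitely many edges, Ascoli--Arzelà gives a subsequence with $u_n \to u$ in $C^1$ on each edge. Also $\rho_n = \rho$ is fixed, so no weak* limit is needed.

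Next I pass to the limit. Convergence in $C^1$ up to the endpoints of each edge preserves continuity at the vertices and the Kirchhoff conditions. So $u$ solves \eqref{eq:edo_fixed} with $\lambda = \Lambda_{m^*}$, in weak form, by passing to the limit in \eqref{eq:weak-formulation}. Uniform convergence on the compact graph gives $\|u\|_{L^2(\G)} = \lim_n \|u_n\|_{L^2(\G)} \ge \delta$, so $u \ne 0$.

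The remaining step is to show $\morse(u) \le m^*$. This is where care is needed, because Morse index is only lower semicontinuous in the favourable direction. If $\morse(u) \ge m^*+1$, take a subspace $X$ of dimension $m^*+1$ on which $Q_u$ is negative definite. Since $X$ is finite dimensional, $Q_u$ is bounded above by $-\eta\|\phi\|_{L^2}^2$ on $X$ for some $\eta > 0$. For $\phi \in X$ we have
\[
Q_{u_n}(\phi;\G) - Q_u(\phi;\G) = (\lambda_n - \Lambda_{m^*})\|\phi\|_{L^2}^2 - (p-1)\int_\G \rho\bigl(|u_n|^{p-2} - |u|^{p-2}\bigr)\phi^2 \intd x .
\]
This tends to $0$ uniformly on the unit $L^2$-sphere of $X$, because $u_n \to u$ uniformly and $\lambda_n \to \Lambda_{m^*}$. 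Hence $Q_{u_n} < 0$ on $X\setminus\{0\}$ for $n$ large, giving $\morse(u_n) \ge m^*+1$, a contradiction.

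Therefore $(\Lambda_{m^*}, u)$ is a nontrivial solution with $\morse(u) \le m^*$, so the infimum in \eqref{def_Lambda} is attained, contradicting the hypothesis. For $m^* = 1$ the hypothesis holds by Proposition~\ref{lower_bound_lambda_compact}. The main obstacle is the uniform $L^\infty$ bound, and this is supplied by the earlier Hartman-type result. Everything else is routine compactness.
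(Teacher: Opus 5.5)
Your proof is correct and follows the paper's argument essentially step by step. Both argue by contradiction, take uniform bounds from Corollary~\ref{corollary_bounds_compact}, pass by compactness to a nontrivial limit solution at $\Lambda_{m^*}$ whose $L^2$ norm stays bounded below, and show that its Morse index is at most $m^*$ via uniform convergence. The only differences are cosmetic: you extract a subsequence converging in $C^1$ edge by edge via Ascoli--Arzel\`a, where the paper uses weak $H^1$ convergence plus the compact embedding into $L^\infty$, and you spell out the uniform negativity of $Q_u$ on the finite-dimensional subspace, which the paper leaves implicit.
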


\begin{proof}
    By contradiction, if
    \eqref{small_mass_near_threshold} does not hold,
    there exists $\epsilon > 0$ and
    a sequence of solutions $(\lambda_n, u_n)
    \subseteq \intervaloo{\Lambda_{m^*}, +\infty}
    \times (H^1(\G) \setminus \{ 0 \})$ such that
    \begin{equation*}
        \lambda_n \to \Lambda_{m^*}, \qquad
        \morse(u_n) \le m^*, \qquad
        \| u_n \|_{L^2(\G)} \ge \epsilon.
    \end{equation*}
    Since $\inf_{\G} \rho > 0$,
    Corollary~\ref{corollary_bounds_compact} implies that
    $(u_n)$ is bounded in $\C^1$-norm
    on every edge, hence in $H^1(\G)$
    (since $\G$ is compact).
    Taking a subsequence, there exists $u \in H^1(\G)$
    such that $u_n \rightharpoonup u$.
    Then, since weak $H^1$ convergence
    implies strong $L^\infty$ and in particular $L^2$
    convergence (as $\G$ is compact),
    $(\Lambda_{m^*}, u)$ is a solution of \eqref{eq:edo_fixed}
    and
    $\| u \|_{L^2(\G)} \ge \epsilon$.
    We then remark that $m(u) \le m^*$.
    Indeed, otherwise there would exist
    a space $Y \subseteq H^1(\G) \cap \Cc(\G)$
    of dimension $m^*+1$ on which
    $Q_u(\varphi;\G)$ is negative,
    but the $L^\infty$ convergence of $u_n$
    to $u$ implies that $Q_{u_n}(\cdot;\G)$ is also negative
    on $Y$ for $n$ large, contradicting $\morse(u_n) \le m^*$.
    Considering $u$ shows that $\Lambda_{m^*}$ is attained,
    a contradiction.
\end{proof}

In the following theorem, we adapt to the graph setting
a result of Pierotti and Verzini about solutions
of nonlinear Schrödinger equations on $\C^1$ bounded domains
(see \cite[Theorem 1.2]{PieVer}).
\begin{theorem}
    \label{thm:masses_compact}
    Let $\G$ be a compact metric graph, $p > 2$,
    $m^*$ be a positive integer,
    $W \in L^{\infty}(\G)$,
    and $\rho \in L^{\infty}(\G)$
    be such that $\inf_\G \rho > 0$.
    Let us define $\Lambda_{m^*}$ as in \eqref{def_Lambda}.
    Then, for all $\tilde\Lambda > 0$, there exists
    $C(\G, p, m^*, W, \rho, \tilde\Lambda) > 0$ such that,
    for all $\lambda \in \IR$,
    \begin{enumerate}
        \item\label{comp1} if $\lambda < \Lambda_{m^*}$,
        then $q_{m^*}(\lambda) = 0$;

        \item\label{comp2} $q_{m^*}(\Lambda_{m^*}) > 0$
        if and only if $\Lambda_{m^*}$ is attained;

        \item\label{comp3} if $\Lambda_{m^*}$ is not attained,
        then \smash{$q_{m^*}(\lambda)
        \xrightarrow[\lambda \to \Lambda_{m^*}]{} 0$};

        \item\label{comp4} if
        $\Lambda_{m^*} \le \lambda \le \tilde\Lambda$,
        then $q_{m^*}(\lambda) \le C$;

        \item\label{comp5} if
        $\lambda \ge \tilde\Lambda$, then
        $q_{m^*}(\lambda) \le C \lambda^{\frac{6-p}{4(p-2)}}$.
    \end{enumerate}
    In particular, if $p \ge 6$, $q_{m^*}$ is bounded on $\IR$
    so that there is a bound on the $L^2$ norms obtained
    for a bounded Morse index.
    When $p > 6$, we furthermore have
    $q_{m^*}(\lambda)
    \xrightarrow[\lambda \to +\infty]{} 0$.
\end{theorem}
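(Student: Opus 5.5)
The plan is to treat the five items one at a time, each by combining a result already proved. Throughout, $\G$ is compact and $\inf_\G\rho>0$. Hence $\G\in\GGfin$, (H$_{W,\rho}$) holds with $a=\inf_\G\rho$, and $\G_0=\emptyset$.

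Item~\ref{comp1} is immediate from the definition~\eqref{def_Lambda}. If $\lambda<\Lambda_{m^*}$, there is no nontrivial solution with $\morse(u)\le m^*$, so $q_{m^*}(\lambda)=0$ by convention.

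For item~\ref{comp2}, suppose first that $\Lambda_{m^*}$ is attained by some $u\neq0$. Then $q_{m^*}(\Lambda_{m^*})\ge\|u\|_{L^2}>0$. Conversely, $q_{m^*}(\Lambda_{m^*})>0$ means that a nontrivial solution with $\lambda=\Lambda_{m^*}$ and Morse index at most $m^*$ exists, so the infimum is attained.

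Item~\ref{comp3} is exactly Proposition~\ref{limit_lambda_small}. Note that $q_{m^*}$ vanishes for $\lambda<\Lambda_{m^*}$, so the one-sided limit is the same as the two-sided one.

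Item~\ref{comp4} uses the $L^\infty$ bound. Proposition~\ref{lower_bound_lambda_compact} gives $\Lambda_{m^*}\ge-\lambda_{m^*+1}(-v''+Wv)>-\infty$. Set $\Lambda':=\max(\tilde\Lambda,|\Lambda_{m^*}|)$; every $\lambda$ in the range $[\Lambda_{m^*},\tilde\Lambda]$ lies in $[-\Lambda',\Lambda']$. Consider the set $\mathcal U$ of solutions with $\lambda$ in this range and $\morse(u)\le m^*$. Corollary~\ref{corollary_bounds_compact} then gives $\sup_{\mathcal U}\|u\|_{L^\infty}<\infty$. Alternatively, Proposition~\ref{Morse_nodal_zones} bounds the number of nodal zones by $m^*$. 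Since $\G$ has finitely many edges, this bounds the number of roots per edge of any solution not vanishing on that edge, and Proposition~\ref{L_infty_G} applies. Because $|\G|<\infty$, the $L^\infty$ bound gives $\|u\|_{L^2}^2\le|\G|\,\|u\|_\infty^2\le C$.

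For item~\ref{comp5}, use Proposition~\ref{L2_large_lambda}. It gives $\Lambda$ and $C$ with $\|u\|_{L^2}^2\le C\lambda^{\frac{6-p}{2(p-2)}}$ for all $\lambda\ge\Lambda$. Taking square roots gives the exponent $\frac{6-p}{4(p-2)}$. If $\tilde\Lambda<\Lambda$, the remaining interval $[\tilde\Lambda,\Lambda]$ is covered by the bound of item~\ref{comp4} applied with $\Lambda$ in place of $\tilde\Lambda$. On that compact interval $\lambda^{\frac{6-p}{4(p-2)}}$ is bounded below by a positive constant, since $\tilde\Lambda>0$. So, enlarging $C$, the power bound holds there as well.

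The closing statements then follow by gluing the pieces. For $p\ge6$ the exponent is nonpositive, so items~\ref{comp1}, \ref{comp4} and \ref{comp5} together bound $q_{m^*}$ on all of $\IR$. For $p>6$ the exponent is negative, so $q_{m^*}(\lambda)\to0$ as $\lambda\to+\infty$.

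The only delicate step is item~\ref{comp4}, namely checking that the hypotheses of Proposition~\ref{L_infty_G} or Corollary~\ref{corollary_bounds_compact} are met uniformly. This requires the lower bound on $\Lambda_{m^*}$, so that $\lambda$ ranges over a bounded interval, and the root count coming from the Morse bound.
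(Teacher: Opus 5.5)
Your proposal is correct and follows the paper's proof essentially step by step. Items~1--2 come from the definition of $\Lambda_{m^*}$, item~3 from Proposition~\ref{limit_lambda_small}, item~4 from Corollary~\ref{corollary_bounds_compact} together with the lower bound $\Lambda_{m^*}>-\infty$ of Proposition~\ref{lower_bound_lambda_compact}, and item~5 from Proposition~\ref{L2_large_lambda} with the constant enlarged to cover $[\tilde\Lambda,\Lambda]$; your alternative route for item~4 via Propositions~\ref{Morse_nodal_zones} and~\ref{L_infty_G} just unpacks that corollary.
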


\begin{proof}
  Let $\tilde\Lambda > 0$.
  Recall that by Proposition \ref{lower_bound_lambda_compact} we know that $\Lambda_{m^*}>-\infty$.
    The points~\ref{comp1} and~\ref{comp2}
    follow from the definition \eqref{def_Lambda}
    of $\Lambda_{m^*}$
    and point~\ref{comp3} from
    Proposition~\ref{limit_lambda_small}.
    Point~\ref{comp4} follows from
    Corollary~\ref{corollary_bounds_compact}.
    Finally, Proposition~\ref{L2_large_lambda} provides $\Lambda$
    such that the inequality in point~\ref{comp5} holds for
    $\lambda \ge \Lambda$.  Taking $C$ larger if necessary, we can
    ensure that the inequality of point~\ref{comp5} also holds for
    $\lambda \in \intervalcc{\tilde\Lambda, \Lambda}$.
\end{proof}

\subsubsection{Localized nonlinearity setting}
\label{sec:locnonlin}
\begin{theorem}
    \label{thm:locnonlin}
    Let $\G \in \GGfin$ be a metric graph with
    at least one half-line and at least one bounded edge,
    $p > 2$ and $m^*$ be a positive integer.
    Let $W \in L^\infty(\G)$ and
    $\rho \in L^\infty(\G)$ satisfy
    $\inf_{\K} \rho > 0$ where $\K$
    is the compact core of $\G$ (i.e.\ the metric subgraph of $\G$ consisting
of all the bounded edges of $\G$).
    Assume that $W \equiv \rho \equiv 0$
    on the half-lines of $\G$.
    Then, for all $\tilde\Lambda > 0$, there exists
    $C(\G, p, m^*, W, \rho, \tilde\Lambda) > 0$ such that,
    for all $\lambda \in \IR$,
    \begin{enumerate}
        \item\label{loc1} if $\lambda < 0$, then
        $q_{m^*}(\lambda) = 0$;
        \item\label{Linfty_locnonlin} every solution
        $(\lambda, u) \in \intervalcc{0, \tilde\Lambda}
        \times H^1(\G)$
        with $m(u) \le m^*$
        satisfies $\| u \|_{L^\infty(\G)} \le C$;
        \item\label{loc3} $q_{m^*}(0) \le C$
        and all solutions
        $u \in H^1(\G)$ with $\lambda = 0$
        and $m(u) \le m^*$ vanish
        on every half-line of $\G$;
        \item\label{loc4} if $0 < \lambda \le \tilde\Lambda$,
        then $q_{m^*}(\lambda)
        \le C \lambda^{-1/4}$;
        \item\label{loc5} if $\lambda \ge \tilde\Lambda$,
        then $q_{m^*}(\lambda)
        \le C \lambda^{\frac{6-p}{4(p-2)}}$.
    \end{enumerate}
\end{theorem}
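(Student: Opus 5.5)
The plan is to exploit the explicit form of solutions on the half-lines. There $W\equiv\rho\equiv0$, so on each half-line $h\cong\intervalco{0,+\infty}$ the equation reads $u''=\lambda u$. The only $H^1(h)$ solutions are therefore:
\begin{itemize}
\item $u\equiv 0$ if $\lambda\le 0$ (oscillatory or affine otherwise, hence not in $L^2$);
\item $u(x)=u(\vv_h)\e^{-\sqrt{\lambda}\,x}$ if $\lambda>0$, where $\vv_h$ is the vertex of $h$.
\end{itemize}
In particular $|u|$ on $h$ is bounded by $|u(\vv_h)|$, which reduces everything to estimates on the compact core $\K$, where $\rho\ge a>0$. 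The hypotheses \eqref{hyp_W_rho} hold, with $\G_0$ equal to the union of the half-lines, so $\G\setminus\G_0=\K$.

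For point~\ref{loc1}, I would invoke Proposition~\ref{lower_bound_lambda_noncompact}. Since $W\equiv0$ on the half-lines, it gives $\lambda\ge0$ for every finite Morse index solution, hence $q_{m^*}(\lambda)=0$ for $\lambda<0$.

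For point~\ref{Linfty_locnonlin}, I would apply Proposition~\ref{L_infty_G} with $\Lambda=\tilde\Lambda$. This requires bounding the number of roots of $u$ on each bounded edge by a constant depending only on $m^*$. This is the step needing the most care, and I would argue as follows.
\begin{itemize}
\item On an edge $e\subseteq\K$ where $u\not\equiv0$, zeros are isolated and simple by uniqueness for the Cauchy problem.
\item Between any two consecutive interior zeros lies an interval contained in a distinct nodal zone of $u$, and such a zone is not contained in $\G_0$, since $\rho\ge a$ on $e$.
\item Proposition~\ref{Morse_nodal_zones} (see also Remark~\ref{roots_morse_index}) then bounds the number of such intervals by $m^*$, so $u$ has at most $m^*+1$ roots on $e$.
\end{itemize}
This yields $\|u\|_{L^\infty(\K)}\le C$ with $C$ depending on $\G,p,m^*,W,\rho,\tilde\Lambda$. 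The monotone form of $u$ on the half-lines extends the bound to all of $\G$.

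For point~\ref{loc3}, the case $\lambda=0$ forces $u\equiv0$ on every half-line by the first paragraph. Then $\|u\|_{L^2(\G)}^2=\|u\|_{L^2(\K)}^2\le C^2|\K|$ by point~\ref{Linfty_locnonlin}.

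For point~\ref{loc4}, with $0<\lambda\le\tilde\Lambda$, each half-line contributes $\int_h|u|^2=u(\vv_h)^2/(2\sqrt\lambda)\le C^2/(2\sqrt\lambda)$. The core contributes at most $C^2|\K|\le C^2|\K|\,\tilde\Lambda^{1/2}\lambda^{-1/2}$. Summing over the finitely many half-lines and taking square roots gives $q_{m^*}(\lambda)\le C\lambda^{-1/4}$ after enlarging $C$.

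For point~\ref{loc5}, Proposition~\ref{L2_large_lambda} applies, since $\G\in\GGfin$ and \eqref{hyp_W_rho} holds. It provides $\Lambda$ and $C$ with $q_{m^*}(\lambda)\le C\lambda^{\frac{6-p}{4(p-2)}}$ for all $\lambda\ge\Lambda$. On the remaining range $\intervalcc{\tilde\Lambda,\Lambda}$, I would rerun the argument of point~\ref{loc4} with $\tilde\Lambda$ replaced by $\max(\tilde\Lambda,\Lambda)$. This gives a uniform bound, and since $\lambda^{\frac{6-p}{4(p-2)}}$ is bounded away from $0$ on this compact interval, enlarging $C$ concludes.

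The main obstacle is the root-counting step in point~\ref{Linfty_locnonlin}: it must produce a bound uniform in $u$ and $\lambda$ so that Proposition~\ref{L_infty_G} applies. The rest is explicit computation on the half-lines.
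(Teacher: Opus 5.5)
Your proposal is correct and follows essentially the same route as the paper. It uses Proposition~\ref{lower_bound_lambda_noncompact} for point 1, the explicit half-line solutions $u(\vv_h)\e^{-\sqrt{\lambda}x}$ (or $0$ when $\lambda=0$) together with Proposition~\ref{L_infty_G} and the nodal-zone root count of Remark~\ref{roots_morse_index} for points 2--4, and Proposition~\ref{L2_large_lambda} for point 5, where your rerun of point 4 with $\max(\tilde\Lambda,\Lambda)$ on $[\tilde\Lambda,\Lambda]$ is exactly what the paper means by ``point 4 ensures this inequality is also valid''.
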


\begin{proof}
  Let $\tilde\Lambda > 0$.
    Point~\ref{loc1} follows from
    Proposition~\ref{lower_bound_lambda_noncompact}.
    Then, we remark that
    on the half-lines, the ODE is $-u'' + \lambda u = 0$.
    If $\lambda = 0$, the only solution
    of this ODE converging to $0$
    as $x \rightarrow +\infty$ is $0$.
    If $\lambda > 0$, the solutions converging
    to $0$ have the form $u(x) = a \e^{-\sqrt{\lambda}x}$.
    Using Proposition~\ref{L_infty_G}
    (and recalling Remark~\ref{roots_morse_index}
    and that $\inf_{\K} \rho > 0$),
    the explicit form of the solutions
    on the half-lines implies point~\ref{Linfty_locnonlin},
    from which we also deduce point~\ref{loc3}
    since nonzero solutions for $\lambda = 0$
    have their support included in $\K$.

    \medbreak
    On the half-lines, the $L^2$ norms are given by
    \begin{equation*}
        \int_0^{+\infty} \bigl( a \e^{-\sqrt{\lambda}x} \bigr)^2 \intd x
        = \frac{a^2}{2\sqrt{\lambda}},
    \end{equation*}
    so that
    \begin{equation*}
        \| u \|_{L^2(\G)}^2
        \le \| u \|_{L^\infty(\G)}^2
        \Bigl( |\K| + \frac{H}{2\sqrt{\lambda}} \Bigr),
    \end{equation*}
    where
    $H$ is the number of half-lines of $\G$.
    Using point~\ref{Linfty_locnonlin},
    we obtain point \ref{loc4}.
    Proposition~\ref{L2_large_lambda} provides $\Lambda$ such that
    the inequality in point~\ref{loc5} holds for $\lambda \ge
    \Lambda$.  Possibly taking $C$ larger,
    point~\ref{loc4} ensures that this inequality is also valid for
    $\lambda \in \intervalcc{\tilde\Lambda, \Lambda}$.
\end{proof}

\begin{example}
    Let us show that one may have
    a blow-up of the $L^2$ norms as $\lambda \to 0^+$ with
    \begin{equation}
        \label{blowup_norm_tadpole}
        \varliminf_{\lambda \to 0^+}
        q_{m^*}(\lambda) \lambda^{1/4} > 0,
    \end{equation}
    so that one cannot improve the rate
    in point~\ref{loc4}.

    We consider a tadpole graph $\G_{\text{\upshape t}}$
    as in Figure~\ref{fig:tadpole},
    with a loop of length $2$
    and we take $W \equiv 0$ on $\G_{\text{\upshape t}}$,
    $\rho \equiv 1$ on the loop
    and $\rho \equiv 0$ on the half-line.

    Let us identify the loop with the interval
    $\intervalcc{-1, 1}$ in such a way that
    the middle of the loop corresponds to $0$
    and that the vertex at which the half-line
    is attached corresponds to $-1$ and to~$1$.

    Consider $u_0: \intervalcc{-1, 1} \to \IR$
    a $2$-periodic solution
    of $-u'' = |u|^{p-2}u$
    with two roots inside $\intervalcc{-1, 1}$
    which attains its (negative) minimum at $0$
    and its maximum at $1$ (see
    Proposition~\ref{periodic_sol}).
    Define $M_0 \coloneq -u_0(0)$.

    Finally, define a function $U_0: \G_{\text{\upshape t}} \to \IR$ by
    \begin{equation*}
        U_0(x) \coloneq
        \begin{cases}
            u_0(x)  &\text{if $x$ belongs to the loop,}\\
            u_0(1)  &\text{if $x$ belongs to the half-line}.
        \end{cases}
    \end{equation*}
    Then, $U_0$ solves \eqref{eq:edo_fixed}
    in a pointwise sense but is not in $H^1$
    since it does not converge to $0$ along the half-line.

    We will show that, for $\lambda > 0$
    small enough, there exist solutions $U_{\lambda}$, having a
    uniform bound on their Morse index,
    which converge pointwise to $U_0$
    as $\lambda \to 0^+$ and whose $L^2$-norms
    converge to $+\infty$ in such a way that
    \eqref{blowup_norm_tadpole} holds.

    Given $M \in \IR$ and $\lambda \in \IR$, we consider
    $\psi_{M, \lambda}(x)$ the solution of the Cauchy problem
    (which is well-defined for all $x \in \IR$ according
    to Proposition~\ref{no_blow_up})
    \begin{equation*}
        \begin{cases}
            -\psi'' + \lambda \psi
            = |\psi|^{p-2}\psi\\
            \psi(0) = -M,\ \psi'(0) = 0.
        \end{cases}
    \end{equation*}
    We want to show that for $\lambda \ge 0$ small,
    there exists $M(\lambda)$ so that one may put
    $\psi_{M, \lambda}$ on the loop
    and extend it to a solution on $\G_{\text{\upshape t}}$.
    On the half-line (identified with
    $\intervalco{0, +\infty}$), the ODE in \eqref{eq:edo_fixed} is simply
    $-u'' + \lambda u = 0$ so that one must have
    $u(x) = \psi_{M, \lambda}(1) \e^{-\sqrt{\lambda} x}$
    to obtain a solution decaying at infinity
    satisfying the continuity condition.
    It remains to satisfy Kirchhoff's condition
    at the node, which reads
    \begin{equation*}
        \psi_{M, \lambda}'(1)
        - \psi_{M, \lambda}'(-1)
        + \sqrt{\lambda} \psi_{M, \lambda}(1)
        = 0,
    \end{equation*}
    namely $F(M, \lambda) = 0$ with
    \begin{equation*}
        F(M, \lambda)
        \coloneq 2\psi_{M, \lambda}'(1)
        + \sqrt{\lambda} \psi_{M, \lambda}(1)
    \end{equation*}
    since $\psi_{M, \lambda}(x)$ is even in $x$.
    We have that $F(M_0, 0) = 0$.
    Moreover,
    $F \in \C^0(\IR \times \intervalco{0, +\infty})$
    by the continuous dependence of solutions
    of the Cauchy problem on initial values and parameters.

    Given $\epsilon > 0$ small enough, we have
    $F(M_0 - \epsilon, 0) = 2\psi_{M_0 - \epsilon, 0}'(1) > 0$
    and $F(M_0 + \epsilon, 0) = 2\psi_{M_0 + \epsilon , 0}'(1) < 0$
    (since the period of $\psi_{M, 0}$ is a decreasing function of $M$
    (see e.g.~\cite[Lemma~3.4]{CGJT-2025}) and $F(M_0, 0)=0$).
     By continuity of $F$, there exists $\delta > 0$
    such that for all $\lambda \in \intervalcc{0, \delta}$,
    one has $F(M_0 - \epsilon, \lambda) > 0$
    and $F(M_0 + \epsilon, \lambda) < 0$.
    Therefore, for all $\lambda \in \intervalcc{0, \delta}$,
    there exists
    $M(\lambda) \in \intervalcc{M_0 - \epsilon, M_0 + \epsilon}$
    with $F(M(\lambda), \lambda) = 0$.
    Moreover, we can choose $M(\lambda)$ so that
    $M(\lambda) \to M_0$ as $\lambda \to 0^+$.

    Therefore, for all $\lambda \in \intervalcc{0, \delta}$,
    the function $U_{\lambda}: \G_{\text{\upshape t}} \to \IR$ defined by
    \begin{equation*}
        U_{\lambda}(x) \coloneq
        \begin{cases}
            \psi_{M(\lambda), \lambda}(x)
            &\text{if $x$ belongs to the loop,}\\
            \psi_{M(\lambda), \lambda}(1)  \e^{-\sqrt{\lambda} x}
            &\text{if $x$ belongs to the half-line,}
        \end{cases}
    \end{equation*}
    is a solution of \eqref{eq:edo_fixed}.
    Since $M(\lambda) \to M_0$ as $\lambda \to 0^+$,
    we deduce that for all $x \in \G_{\text{\upshape t}}$,
    we have $U_{\lambda}(x) \to U_0(x)$ as $\lambda \to 0^+$.
    For all $\lambda \in \intervalcc{0, \delta}$,
    we have
    \begin{equation*}
        \| U_{\lambda} \|_{L^\infty(\G)}
        = \| \psi_{M(\lambda), \lambda} \|_{L^\infty(-1, 1)},
    \end{equation*}
    which is uniformly bounded with respect to
    $\lambda$. Thus, using Proposition~\ref{Linfty_to_morse_localized},
    there exists a positive integer $m^*$
    so that one has $\morse(U_\lambda) \le m^*$
    for all $\lambda \in \intervalcc{0, \delta}$.

    Finally, we have that
    \begin{equation*}
        \| U_{\lambda} \|_{L^2(\G)}
        \ge \psi_{M(\lambda), \lambda}(1)
        \left(
            \int_{0}^{+\infty} \e^{-2\sqrt{\lambda} x}\intd x
        \right)^{1/2}
        = \psi_{M(\lambda), \lambda}(1) \, 2^{-1/2} \,
        \lambda^{-1/4}.
    \end{equation*}
    Therefore,
    \begin{equation*}
        q_{m^*}(\lambda) \lambda^{1/4}
        \ge \| U_{\lambda} \|_{L^2(\G)} \lambda^{1/4}
        \ge \psi_{M(\lambda), \lambda}(1) \, 2^{-1/2}
        \xrightarrow[\lambda \to 0^+]{} u_0(1) \, 2^{-1/2},
    \end{equation*}
    so that \eqref{blowup_norm_tadpole} holds.
\end{example}

\begin{remark}
    On the other hand, Example~\ref{compactly_supported_solutions}
    shows that there may exist nonzero solutions
    with finite Morse index when $\lambda = 0$.
    Considering the same type of example for $\lambda > 0$
    shows that there exist families of solutions
    whose Morse index and $L^2$-norms stay bounded
    as $\lambda \to 0^+$. This shows that the $L^2$-norm of the solutions for $\lambda\to0$
    does not always behave like $\lambda^{-1/4}$.
    Hence, the $\lambda^{-1/4}$ rate for $\lambda \to 0^+$ is not universal,
    contrary to the $\lambda^{\frac{6-p}{2(p-2)}}$ rate
    for $\lambda\to +\infty$ (see Proposition \ref{L2_large_lambda}).
\end{remark}

\subsubsection{Usual NLS equation on the half-lines}
\begin{theorem}
    \label{thm:nls}
    Let $\G \in \GGfin$ be a metric graph with
    at least one half-line,
    $p > 2$ and $m^*$ be a positive integer.
    Let $W \in L^\infty(\G)$ and
    $\rho \in L^\infty(\G)$ be such that
    $\inf_{\K} \rho > 0$
    where $\K$ is the compact core of $\G$.
    Assume that $W \equiv 0$
    and that $\rho \equiv 1$
    on the half-lines of $\G$.
    Then, for all $\tilde\Lambda > 0$, there exists
    $C(\G, p, m^*, W, \rho, \tilde\Lambda) > 0$
    and $D(\G, p, m^*, W, \rho, \tilde\Lambda) > 0$ such that,
    for all $\lambda \in \IR$,
    \begin{enumerate}
        \item\label{nls1} if $\lambda < 0$, then
        $q_{m^*}(\lambda) = 0$;
        \item\label{nls2} every solution
        $(\lambda, u) \in \intervalcc{0, \tilde\Lambda}
        \times H^1(\G)$
        with $\morse(u) \le m^*$
        satisfies $\| u \|_{L^\infty(\G)} \le C$;
        \item\label{nls3} $q_{m^*}(0) \le C$
        and all solutions
        $u \in H^1(\G)$ with $\lambda = 0$
        and $\morse(u) \le m^*$ vanish
        on every half-line of $\G$;
        \item\label{nls4} if $0 < \lambda \le \tilde\Lambda$,
        then $q_{m^*}(\lambda)
        \le D + C \lambda^{\frac{6-p}{4(p-2)}}$;
        \item\label{nls5} if $\lambda \ge \tilde\Lambda$,
        then $q_{m^*}(\lambda)
        \le C \lambda^{\frac{6-p}{4(p-2)}}$.
    \end{enumerate}
\end{theorem}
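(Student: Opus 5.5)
The proof follows the template of Theorem~\ref{thm:locnonlin}, with explicit NLS solutions on the half-lines replacing the decaying exponentials. Point~\ref{nls1} follows at once from Proposition~\ref{lower_bound_lambda_noncompact}: since $W\equiv 0$ on every half-line, a finite Morse index solution has $\lambda\ge 0$. Points~\ref{nls5} and the large-$\lambda$ part of~\ref{nls4} come from Proposition~\ref{L2_large_lambda}, which gives $\Lambda$ and the bound $q_{m^*}(\lambda)\le C\lambda^{\frac{6-p}{4(p-2)}}$ for $\lambda\ge\Lambda$. On the compact range $[\tilde\Lambda,\Lambda]$ this extends by enlarging $C$, once a bound uniform for $\lambda$ in compact subsets of $(0,\infty)$ is available; this bound is a consequence of~\ref{nls4}.

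For~\ref{nls2}, I apply Proposition~\ref{L_infty_G} with Remark~\ref{roots_morse_index}. Since $\rho\ge \min(1,\inf_\K\rho)>0$ on all of $\G$, we have $\G_0=\emptyset$. By Proposition~\ref{Morse_nodal_zones}, a Morse index at most $m^*$ bounds the number of nodal zones, hence the number of roots on each edge. I have to check the one point where this argument could fail, namely the half-lines, where roots are not localized in a bounded interval. Here I use the explicit form. On a half-line, $-u''+\lambda u=|u|^{p-2}u$ with $u\to 0$ at infinity (Proposition~\ref{stable-outside-a-compact}) forces the energy $\tfrac12 u'^2-\tfrac\lambda2 u^2+\tfrac1p|u|^p$ to vanish. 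So $u$ is either $0$ or a portion of the translated soliton $\pm\lambda^{1/(p-2)}\varphi(\lambda^{1/2}(x+c))$ (for $\lambda=0$ only $u\equiv0$ survives, since $u'^2=-\tfrac2p|u|^p$). In particular $|u|\le(\tfrac{p\lambda}{2})^{1/(p-2)}$ on half-lines, which is bounded for $\lambda\le\tilde\Lambda$. On the compact core, Proposition~\ref{L_infty_G} applies edge by edge, giving~\ref{nls2}. The same energy identity gives~\ref{nls3}: at $\lambda=0$ the solution vanishes on half-lines and is supported in $\K$, so $\|u\|_{L^2}^2\le|\K|\,C^2$.

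For~\ref{nls4} I split $\|u\|_{L^2(\G)}^2=\|u\|^2_{L^2(\K)}+\sum_h\|u\|^2_{L^2(h)}$. The first term is at most $|\K|C^2$ by~\ref{nls2}, which produces $D$. On each half-line, $u$ is bounded pointwise by a piece of a soliton, possibly including its peak, so
\[
\|u\|_{L^2(h)}^2\le \int_\IR \lambda^{2/(p-2)}\varphi(\lambda^{1/2}y)^2\,dy=\lambda^{\frac{2}{p-2}-\frac12}\|\varphi\|_{L^2(\IR)}^2 ,
\]
and $\frac{2}{p-2}-\frac12=\frac{6-p}{2(p-2)}$. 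Summing over the finitely many half-lines and taking square roots gives $q_{m^*}(\lambda)\le D+C\lambda^{\frac{6-p}{4(p-2)}}$.

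The main obstacle is justifying rigorously the zero-energy reduction on half-lines, namely that $u\to0$ and $u'\to0$ at infinity. $u\to0$ comes from $u\in H^1$. For $u'\to0$, I would use $u'\in L^2$ together with the uniform continuity of $u'$, which follows from $u''$ being bounded. It then remains to classify the solutions of $u'^2=\lambda u^2-\tfrac2p|u|^p$ with $u\to0$ as $0$ or soliton tails, and this follows from uniqueness for the Cauchy problem.
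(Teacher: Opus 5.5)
Your proposal is correct and follows essentially the same route as the paper. Point 1 comes from Proposition \ref{lower_bound_lambda_noncompact} and point 2 from Proposition \ref{L_infty_G}; the zero-energy (phase-plane) classification on the half-lines, giving only $0$ for $\lambda=0$ and soliton portions for $\lambda>0$, yields points 3 and 4 via $\|u\|_{L^2(\G)}^2\le \|u\|_{L^\infty(\G)}^2|\K|+H\|\phi_\lambda\|_{L^2(\IR)}^2$; and point 5 comes from Proposition \ref{L2_large_lambda} combined with point 4. Your separate soliton bound on the half-lines in point 2 is harmless but unnecessary, since Proposition \ref{L_infty_G} already gives the bound on $\G\setminus\G_0=\G$, half-lines included, and only needs a bound on the number of roots per edge.
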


\begin{proof}
    Point~\ref{nls1} follows from
    Proposition~\ref{lower_bound_lambda_noncompact} and
    point~\ref{nls2} from Proposition~\ref{L_infty_G}.
    On the half-lines, the ODE is
    \begin{equation}
        \label{ODE_half-line_NLS}
        -u'' + \lambda u = |u|^{p-2}u.
    \end{equation}
    If $\lambda \le 0$, a phase plane analysis
    shows that the only solution
    of this ODE converging to $0$
    as $x \rightarrow +\infty$ is $0$,
    from which point~\ref{nls3} follows from point~\ref{nls2}
    since the solutions for $\lambda = 0$
    have their support included in $\K$.

    If $\lambda > 0$, the solutions converging to $0$ of \eqref{ODE_half-line_NLS}
    are portions of the soliton $\phi_{\lambda}$,
    the unique positive solution
    of \eqref{ODE_half-line_NLS} on $\IR$ attaining
    its maximum at $x = 0$.
    A computation implies that for every $\lambda > 0$,
    \begin{equation*}
        \phi_{\lambda}(x)
        = \lambda^{\frac{1}{p-2}} \phi_1(\lambda^{\frac12}x)
    \end{equation*}
    so that
    \begin{equation*}
        \| \phi_{\lambda} \|_{L^2(\IR)}
        = \lambda^{\frac{6-p}{4(p-2)}} \| \phi_1 \|_{L^2(\IR)}.
    \end{equation*}
    Thus, if $(\lambda, u) \in \IR \times H^1(\G)$
    is a solution with $\morse(u) \le m^*$, we have
    \begin{equation}
        \| u \|_{L^2(\G)}^2
        \le \| u \|_{L^\infty(\G)}^2 |\K| + H \lambda^{\frac{6-p}{2(p-2)}} \| \phi_1 \|_{L^2(\IR)}^2
        \label{bd_2_u_nls}
    \end{equation}
    where $H$ is the number of half-lines of $\G$.
    Point~\ref{nls4}
    follows from \eqref{bd_2_u_nls} and point~\ref{nls2}.
    Point~\ref{nls5} results from Proposition~\ref{L2_large_lambda}
    and point~\ref{nls4} in the same way as in the proof of
    Theorem~\ref{thm:locnonlin}.
\end{proof}

\begin{remark}
    Example~\ref{compactly_supported_solutions}
    shows that there exists a family
    $(u_{\lambda})_{\lambda \in \intervalcc{0, \delta}}$
    of nonzero solutions with bounded Morse index
    and satisfying
    $0 < A \le \| u _{\lambda} \|_{L^2(\G)} \le B$
    for some constants $A, B > 0$ and
    for all $\lambda \in \intervalcc{0, \delta}$
    since one may consider a continuous family
    of periodic solutions supported in the loop
    (whose $L^2$ norms vary continuously as $\lambda$
    varies).
    Moreover, simply taking $\G = \IR$
    and $W \equiv 0$, $\rho \equiv 1$
    shows that the term
    $C \lambda^{\frac{6-p}{4(p-2)}}$
    is important in the upper bounds
    in \ref{nls4} and \ref{nls5}, by considering
    the soliton as a solution on the real line.
\end{remark}

\appendix
\section{The NLS ODE}
In this appendix, for the reader's convenience,
we briefly recall a few properties
of the NLS ODE
\begin{equation}
    \label{eq:NLS_ODE}
    -u'' + \lambda u = |u|^{p-2}u
\end{equation}
for given $p > 2$ and $\lambda \in \IR$.

\begin{proposition}
    \label{no_blow_up}
    Given $p > 2$ and $\lambda \in \IR$,
    all solutions of \eqref{eq:NLS_ODE}
    exist globally in $\IR$ and the ODE energy
    \begin{equation*}
        H(x) \coloneq \frac{|u'(x)|^2}{2}
        + \frac{|u(x)|^p}{p}
        - \lambda \frac{|u(x)|^2}{2}
    \end{equation*}
    is constant.
\end{proposition}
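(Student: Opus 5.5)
The plan has two parts: energy conservation by direct differentiation, and global existence from the coercivity of $H$ in $(u,u')$.

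\textbf{Conservation of $H$.} Let $u$ be a solution on its maximal interval of existence $(x_-,x_+)$. Since the nonlinearity $u\mapsto |u|^{p-2}u$ is $\C^1$ for $p>2$, the Cauchy problem has a unique local $\C^2$ solution. The map $s\mapsto |s|^p/p$ is $\C^1$ with derivative $|s|^{p-2}s$. Hence on $(x_-,x_+)$
\begin{equation*}
  H'(x) = u'u'' + |u|^{p-2}u\,u' - \lambda u u'
  = u'\bigl(u'' + |u|^{p-2}u - \lambda u\bigr) = 0
\end{equation*}
by \eqref{eq:NLS_ODE}. So $H$ is constant, equal to some $H_0$.

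\textbf{Global existence.} The standard blow-up alternative says that if $x_+<\infty$, then $|u|+|u'|$ is unbounded near $x_+$, and similarly at $x_-$. It therefore suffices to bound $|u|$ and $|u'|$ on the maximal interval. The key observation is the sign in the focusing convention used here: $|u|^p/p$ enters $H$ with a plus sign. Thus
\begin{equation*}
  \frac{|u'|^2}{2} + \frac{|u|^p}{p} = H_0 + \lambda\frac{|u|^2}{2} \le H_0 + \frac{|\lambda|}{2}|u|^2 .
\end{equation*}
Since $p>2$, the function $s\mapsto s^p/p - |\lambda| s^2/2$ tends to $+\infty$. So there is $M$, depending only on $H_0,\lambda,p$, with $|u|\le M$ on the whole maximal interval. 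Then $|u'|^2\le 2H_0+|\lambda|M^2$, so $|u'|$ is bounded too. This contradicts blow-up, giving $x_\pm=\pm\infty$.

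There is no real obstacle here. The only point needing care is that $H$ controls $|u|$ despite the indefinite term $-\lambda|u|^2/2$ when $\lambda>0$, and this is exactly where $p>2$ enters.
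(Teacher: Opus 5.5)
Your proposal is correct and takes essentially the same route as the paper: you differentiate $H$ along solutions to get $H'=0$, then use conservation of $H$ to bound $u$ and $u'$ and so rule out blow-up. You also make explicit two steps the paper's proof leaves implicit: the blow-up alternative, and the coercivity of $H$ (that $s\mapsto s^p/p-|\lambda|s^2/2$ tends to $+\infty$ because $p>2$).
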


\begin{proof}
    Since
    \begin{equation*}
        H'(x)
        = u'(x) \Bigl(
        u''(x)
        + |u(x)|^{p-2}u(x)
        - \lambda u(x) \Bigr)
        = 0,
    \end{equation*}
    we deduce that $H$ is constant, so that
    $u(x)$ and $u'(x)$ remain bounded when $x$ varies,
    preventing blow-up.
\end{proof}

\begin{proposition}
    \label{periodic_sol}
    Let $p > 2$, $\lambda \in \IR$.
    Denote
    \begin{equation*}
      \ell_{\text{\upshape max}} \coloneq
        \begin{cases}
            +\infty
            &\text{if $\lambda \ge 0$},\\
            \frac{2\pi}{\sqrt{-\lambda}}
            &\text{if $\lambda < 0$}.
        \end{cases}
      \end{equation*}
    Then, for every $\ell \in \intervaloo{0, \ell_{\text{\upshape max}}}$,
    there exists a solution $u: \IR \to \IR$
    of \eqref{eq:NLS_ODE}
    which is $\ell$-periodic and such that
    $u(0) = u(\ell/2) = u(\ell) = 0$, $u > 0$
    on $\intervaloo{0, \ell/2}$ and
    $u < 0$ on $\intervaloo{\ell/2, \ell}$.
\end{proposition}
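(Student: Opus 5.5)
The plan is a shooting argument in the phase plane, using the conserved energy $H$ of Proposition~\ref{no_blow_up}. For $s>0$, let $u_s$ be the global solution of \eqref{eq:NLS_ODE} with $u_s(0)=0$ and $u_s'(0)=s$. Set $V(u)\coloneq \frac{|u|^p}{p}-\lambda\frac{u^2}{2}$, so that $H=\frac{(u')^2}{2}+V(u)\equiv \frac{s^2}{2}$ along $u_s$.

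First I would show that $u_s$ rises to a unique positive turning value $M(s)>0$ and then returns to $0$. Here $M(s)$ is the unique positive root of $V(M)=s^2/2$. It is unique because $V(u)\to+\infty$ as $u\to\infty$, $V$ is even, and either $V$ is increasing on $(0,\infty)$ (if $\lambda\le 0$) or $V<0$ then increasing beyond its minimum (if $\lambda>0$, which puts the level $s^2/2>0$ above every negative value). Reflection symmetry of the ODE gives $u_s(2\tau-x)=u_s(x)$, where $\tau$ is the time at which $u_s$ reaches $M(s)$. Hence $u_s$ vanishes again at $2\tau(s)$ with $u_s'(2\tau)=-s$, where
\[
  \tau(s)=\int_0^{M(s)}\frac{\intd u}{\sqrt{s^2-2V(u)}} .
\]
Since $u\mapsto -u$ maps solutions to solutions, uniqueness for the Cauchy problem gives $u_s(x+2\tau)=-u_s(x)$. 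Thus $u_s$ is $4\tau(s)$-periodic, with zeros exactly at multiples of $2\tau(s)$, positive on $(0,2\tau)$ and negative on $(2\tau,4\tau)$. The statement then reduces to solving $4\tau(s)=\ell$.

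Next I would prove that $\tau$ is continuous on $(0,\infty)$. The substitution $u=M(s)t$ gives
\[
  \tau(s)=\int_0^1\frac{M\,\intd t}{\sqrt{2V(M)-2V(Mt)}} .
\]
The integrand has only an integrable $(1-t)^{-1/2}$ singularity, because $V'(M)>0$. The implicit function theorem gives continuity of $M(s)$, and dominated convergence then gives continuity of $\tau$.

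Then I would compute the limits at the endpoints. As $s\to\infty$, $M\to\infty$ and the $|u|^p$ term dominates; scaling $u=Mt$ gives $\tau(s)\sim C_p M^{-(p-2)/2}\to 0$. As $s\to 0^+$, there are three cases.
\begin{itemize}
  \item If $\lambda<0$, then $V(u)=\frac{|\lambda|}{2}u^2(1+o(1))$. The integral tends to the harmonic-oscillator value $\frac{\pi}{2\sqrt{-\lambda}}$, so $4\tau\to\frac{2\pi}{\sqrt{-\lambda}}=\ell_{\max}$.
  \item If $\lambda=0$, the same scaling as for $s\to\infty$ gives $\tau\sim C_pM^{-(p-2)/2}\to+\infty$.
  \item If $\lambda>0$, then $M(s)\to(p\lambda/2)^{1/(p-2)}$, the turning point of the homoclinic orbit at level $H=0$. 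Near $u=0$ the integrand behaves like $(s^2+\lambda u^2)^{-1/2}$, whose integral from $0$ is of order $\log(1/s)\to+\infty$.
\end{itemize}
The intermediate value theorem then gives, for each $\ell\in(0,\ell_{\max})$, some $s$ with $4\tau(s)=\ell$. The corresponding $u_s$ satisfies all the required properties, since $\ell/2=2\tau(s)$.

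I expect the main obstacle to be the careful justification of the limit as $s\to 0^+$ when $\lambda>0$ (the saddle at the origin), and the uniform control of the integrand needed for the $\lambda<0$ limit. For $\lambda>0$ I would bound the integral from below on $[0,\delta]$ using $V(u)\ge -\lambda u^2/2$, which gives $\tau(s)\ge\int_0^\delta(s^2+\lambda u^2)^{-1/2}\intd u\to\infty$. For $\lambda<0$ I would use $V(Mt)=M^2\bigl(\frac{|\lambda|}{2}t^2+O(M^{p-2})\bigr)$ uniformly in $t\in[0,1]$, together with a lower bound on $V(M)-V(Mt)$ by a constant multiple of $M^2(1-t^2)$, to dominate the integrand.
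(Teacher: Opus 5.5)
Your time-map argument is correct, and it takes a genuinely different route from the paper. The paper never works in the phase plane. It quotes from De Coster--Habets the existence of a positive solution of the Dirichlet problem $-v''+\lambda v=|v|^{p-2}v$, $v(0)=v(\ell/2)=0$. That result applies whenever $\lambda>-(2\pi/\ell)^2$, i.e.\ above minus the first Dirichlet eigenvalue of $[0,\ell/2]$, which is exactly the condition $\ell<\ell_{\max}$. Energy conservation then gives $v'(0)=-v'(\ell/2)$, hence symmetry of $v$ about $\ell/4$. The periodic solution is obtained by gluing $v$ with $-v(\cdot-\ell/2)$ and extending periodically.

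In your proof, solving $4\tau(s)=\ell$ replaces that existence theorem, and the identity $u_s(x+2\tau)=-u_s(x)$ replaces the gluing. The paper's route is shorter and uniform in $\lambda$, but it outsources the key step. Yours is self-contained, at the cost of the case analysis as $s\to0^+$ (in particular the saddle case $\lambda>0$) and the uniform bounds needed for dominated convergence. You identified these bounds correctly: $V(M)-V(Mt)\ge\frac{|\lambda|}{2}M^2(1-t^2)$ for $\lambda<0$, $V(u)\ge-\lambda u^2/2$ for $\lambda>0$, and local uniform positivity of $(V(M)-V(Mt))/(1-t)$ for continuity.

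In exchange, your approach gives more information:
\begin{itemize}
\item a continuous family of such solutions parametrised by $s$;
\item sharpness of $\ell_{\max}$ for $\lambda<0$, since your formula shows $\tau(s)<\pi/(2\sqrt{-\lambda})$ for every $s$;
\item for $\lambda=0$, the explicit period $4C_pM^{-(p-2)/2}$, which decreases in the amplitude $M$. This is precisely the monotonicity the paper later borrows from \cite{CGJT-2025} in its tadpole example.
\end{itemize}
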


\begin{proof}
    It is standard to show
    (see e.g.~\cite[Chapter VIII, Corollary 1.6]{DeHab})
    that the problem
    \begin{equation*}
        \begin{cases}
          -v'' + \lambda v = |v|^{p-2}v,\\
          v(0) = v(\ell/2) = 0,
        \end{cases}
    \end{equation*}
    has a positive solution if $\lambda > -(2\pi/\ell)^2$,
    since $(2\pi/\ell)^2$ is the first eigenvalue
    of the Laplacian with Dirichlet boundary conditions
    on $\intervalcc{0, \ell/2}$. Since the ODE energy
    of $v$ is constant, $v'(0) = -v'(\ell/2)$
    so that $v$ is even because $v$ and $x \mapsto v(\ell/2-x)$
    solve the same Cauchy problem.
    Finally, the sign-changing periodic
    solution we seek is given by
    \begin{equation*}
      u(x) \coloneq \begin{cases}
        v(x)
        &\text{for } x \in \intervalcc{0, \ell/2},\\
        -v(x-\ell/2)
        &\text{for } x \in \intervalcc{\ell/2, \ell}.
      \end{cases}
      \tag*{\qedhere}
    \end{equation*}
\end{proof}


{\small
\begin{thebibliography}{99}




\bibitem{AST-CVPDE2015}
R. Adami, E. Serra and P. Tilli.
\newblock NLS ground states on graphs.
\newblock {\em Calc. Var. Partial Differential Equations} 54 (1):
743--761, 2015.

\bibitem{AST-JFA2016}
R. Adami, E. Serra and P. Tilli.
\newblock Threshold phenomena and existence results for NLS ground states on metric graphs.
\newblock {\em J. Funct. Anal.} 271 (1): 201--223, 2016.


\bibitem{AST11}
R.~Adami, E.~Serra and P.~Tilli.
\newblock Nonlinear dynamics on branched structures and networks.
\newblock {\em Riv. Math. Univ. Parma (N.S.)} 8(1):109--159, 2017.


\bibitem{ACT2024}
F.~Agostinho, S.~Correia and H.~Tavares.
\newblock Classification and stability of positive solutions to the NLS equation on the T-metric graph.
\newblock {\em Nonlinearity} 37(2), Paper No. 025005, 47~pp., 2024.

\bibitem{Ar}
A. H.~Ardila.
\newblock Orbital stability of standing waves for supercritical NLS with potential on graphs.
\newblock {\em Appl. Anal.} 99(8): 1359--1372, 2020.

\bibitem{BaLi}
A.~Bahri and P. L.~Lions.
\newblock Solutions of superlinear elliptic equations and their Morse indices.
\newblock {\em Comm. Pure. Appl. Math..} XLV: 1205--1215, 1992.





\bibitem{BK}
G.~Berkolaiko and P.~Kuchment.
\newblock {\em Introduction to quantum graphs}, Vol. 186 of {Mathematical
  Surveys and Monographs}.
\newblock American Mathematical Society, Providence, RI, 2013.


\bibitem{BCJS-2023}
J.\,Borthwick, X.\,J.\,Chang, L.\,Jeanjean and N.\,Soave.
\newblock  Normalized solutions of $L^2$-supercritical NLS equations on noncompact metric graphs with localized nonlinearities,
\newblock{\it Nonlinearity}, 36,  3776--3795, 2023.

\bibitem{BCJS-2024}
J.~Borthwick, X.\,J.\,Chang, L.\,Jeanjean and N.\,Soave.
\newblock Bounded Palais-Smale sequences with Morse type information for some constrained functionals.
\newblock {\it  Transactions of the American Mathematical Society} 377 (06),  4481--4517, 2024.

\bibitem{BCT-2019}
W.~Borrelli, R.~Carlone and L.~Tentarelli.
\newblock
An overview on the standing waves of nonlinear Schrödinger and Dirac equations on metric graphs with localized nonlinearity,
\newblock {\it Symmetry}, 11(2), 169, 22~pp., 2019.

\bibitem{BMP}
G.~Berkolaiko, J.~Marzuola and D.~Pelinovsky.
\newblock Edge-localized states on quantum graphs in the limit of large mass.
\newblock {\em Ann. Inst. H. Poincar\'{e}  Anal. Non Lin\'{e}aire} 38(5): 1295-1335, 2021.


\bibitem{CaFiNo}
C.~Cacciapuoti, D.~Finco and D.~Noja.
\newblock Ground state and orbital stability for the NLS equation on a general starlike graph with potentials.
\newblock {\em Nonlinearity} 30(8): 3271-3303, 2017.

\bibitem{CGJT-2025}
P.~Carrillo, D.~Galant, L.~Jeanjean and C.~Troestler.
\newblock Infinitely many normalized solutions of
$L^2$-supercritical NLS equations on noncompact
metric graphs with localized nonlinearities.
\newblock {\em Discrete and Continuous Dynamical Systems} 53: 82--105, 2026.

\bibitem{CJS-2022}
X.~J.~Chang, L.~Jeanjean and N.~Soave.
\newblock Normalized solutions of $L^2$-supercritical {NLS} equations on compact metric graphs.
\newblock  {\it Ann. Inst. H. Poincaré Anal. Non Linéaire} 41, no. 4, 933–959, 2024.
%
\bibitem{DaPa}
L.~Damascelli and F.~Pacella.
\emph{Morse Index of
Solutions of Nonlinear
Elliptic Equations},
De Gruyter Series in Nonlinear
Analysis and Applications 30, 2019.

\bibitem{DeDoGaSe}
C.~De Coster, S.~Dovetta, D.~Galant and E.~Serra.
On the notion of ground state for nonlinear Schrödinger equations on metric graphs.
\newblock {\it Calc. Var. Partial Differential Equations} 62, no. 5, Paper No. 159, 28~pp., 2023.

\bibitem{DeDoGaSeTr}
C.~De Coster, S.~Dovetta, D.~Galant, E.~Serra and C.~Troestler
\newblock  Constant sign and sign changing NLS ground states on noncompact metric graphs.
\newblock
{\it Analysis \& PDE} 19, no.~2, 203--240, 2026.


\bibitem{DeHab}
C.~De Coster and P.~Habets,
\emph{Two-point boundary value problems:
Lower and upper solutions}, Mathematics in Science and Engineering 205, Elsevier, Amsterdam, 2006.

\bibitem{D-JDE2018}
S.~Dovetta.
\newblock Existence of infinitely many stationary solutions of the $L^2$-
subcritical and critical NLSE on compact metric graphs.
\newblock {\em J. Differential
Equations} 264 (7): 4806-4821, 2018.


\bibitem{DGMP}
S.~Dovetta, M.~Ghimenti, A.~M.~Micheletti and A.~Pistoia.
\newblock Peaked and low action solutions of {NLS} equations on graphs with terminal edges.
\newblock {\em SIAM J. Math. Anal.} 52 (3): 2874-2894, 2020.


\bibitem{DT-OTAA-2020}
S.~Dovetta and L.~Tentarelli. Ground states of the $L^2$-critical NLS equation with localized non-linearity on a tadpole graph. \newblock {\em Discrete and continuous models in the theory of networks (Oper. Theory Adv. Appl.)} 281: 113-125, 2020.

\bibitem{DHR2004}
O.~Druet, E.~Hebey and F.~Robert
\newblock {\em Blow-up theory for elliptic PDEs in Riemannian geometry,}.
\newblock Mathematical Notes, vol. 45, Princeton University
Press, Princeton, NJ, 2004.
%
\bibitem{Espetal}
P.~Esposito, G.~Mancini, S.~Santra and P.~N.~Srikanth.
\newblock Asymptotic behavior of radial solutions for a semilinear elliptic problem on an annulus through Morse index.
\newblock {\em J. Differential Equations} 239 (1): 1-15, 2007.

\bibitem{EspPet}
P.~Esposito and M.~Petralla.
\newblock Pointwise blow-up phenomena for a Dirichlet problem.
\newblock {\em Commun. Partial Differ. Equations} 36 (7-9): 1654-1682, 2011.



\bibitem{GiSp}
B.~Gidas and J.~Spruck.
\newblock Global and local behavior of positive solutions of nonlinear elliptic equations.
\newblock {\em Comm. Pure Appl. Math.} 34 (4): 525-598, 1981.


\bibitem{GiTr}
D.~Gilbarg and N.S.~Trudinger,
\newblock {\it Elliptic Partial Differential Equations of
Second Order}, 2nd edition, Springer Verlag, 1983.


\bibitem{Ha}
P.~Hartman.
\newblock On boundary value problems for superlinear second order differential equations.
\newblock{\em Journal of Differential Equations} 26 (1): 37-53, 1977.





\bibitem{KNP}
A.~Kairzhan, D.~Noja and D.~E. Pelinovsky.
\newblock Standing waves on quantum graph.
{\it J. Phys. A: Math. Theor.} 55 243001, 51~pp., 2022.






\bibitem{No}
D.~Noja.
\newblock Nonlinear {S}chr\"{o}dinger equation on graphs: recent results and
  open problems.
\newblock {\em Philos. Trans. R. Soc. Lond. Ser. A Math. Phys. Eng. Sci.},
  372 no. 2007, 20130002, 20~pp., 2014.

\bibitem{NP}
D.~Noja and D.~E. Pelinovsky.
\newblock Standing waves of the quintic {NLS} equation on the tadpole graph.
\newblock {\em Calc. Var. Partial Differential Equations} 59 (5): Paper No. 173, 31~pp., 2020.

\bibitem{NPS}
D.~Noja, D.~Pelinovsky and G.~Shaikhova.
\newblock Bifurcations and stability of standing waves in the nonlinear {S}chr\"odinger equation on the tadpole graph.
\newblock {\em Nonlinearity} 28, 2343-2378, 2015.



\bibitem{PieVer}
D.~Pierotti and G.~Verzini.
\newblock Normalized bound states for the nonlinear {S}chr\"odinger equation in bounded domains.
\newblock {\em Calc. Var. Partial Differential Equations} 56 (5): Paper No. 133, 27 p., 2017.


\bibitem{RTT-JFA1998}
M. Ramos, S. Terracini and C. Troestler.
\newblock Superlinear indefinite elliptic problems and Pohozaev type identities.
\newblock {\em J. Funct. Anal.} 159 (2): 596-628, 1998.






\end{thebibliography}}
\end{document}